\newtheorem{theorem}{Theorem}[section]
\newtheorem{fact}[theorem]{Fact}
\newtheorem{lemma}[theorem]{Lemma}
\newtheorem{corollary}[theorem]{Corollary}
\newtheorem{proposition}[theorem]{Proposition}
\theoremstyle{definition}
\newtheorem{definition}[theorem]{Definition}
\newtheorem{remark}[theorem]{Remark}
\newcommand{\abar}{\bar{a}}
\newcommand{\bbar}{\bar{b}}
\newcommand{\cbar}{\bar{c}}
\newcommand{\ubar}{\bar{u}}
\newcommand{\ybar}{\bar{y}}
\newcommand{\zbar}{\bar{z}}
\def\seq{\subseteq}
\def\nv{\text{-}}
\def\inv{^{\text{-}1}}
\def\F{\mathbb{F}}
\def\smd{\raisebox{.4pt}{\textrm{\scriptsize{~\!$\triangle$\!~}}}}
\newcommand{\claim}[1]{\hfill$\dashv_{\text{\scriptsize{Claim {#1}}}}$}
\def\Stab{\operatorname{Stab}}
\def\Aut{\operatorname{Aut}}
\def\N{\mathbb{N}}
\def\R{\mathbb{R}}
\def\Z{\mathbb{Z}}
\def\T{\mathbb{T}}
\def\cL{\mathcal{L}}
\def\cU{\mathcal{U}}
\def\cB{\mathcal{B}}
\def\cP{\mathcal{P}}
\def\cX{\mathcal{X}}
\def\cC{\mathcal{C}}
\def\cZ{\mathcal{Z}}
\title[On finite sets of small tripling or small alternation]{On finite sets of small tripling or small alternation in arbitrary groups}
\date{May 21, 2020}
\author{Gabriel Conant}
\address{Department of Pure Mathematics and Mathematical Sciences\\
University of Cambridge\\
Cambridge, CB3 0WB, UK}
\email{gconant@maths.cam.ac.uk}
\begin{document}

\begin{abstract}
We prove Bogolyubov-Ruzsa-type results for finite subsets of groups with small tripling, $|A^3|\leq O(|A|)$, or small alternation, $|AA\inv A|\leq O(|A|)$. As applications, we obtain a qualitative analog of Bogolyubov's Lemma for dense sets in arbitrary finite groups, as well as a quantitative arithmetic regularity lemma for sets of bounded VC-dimension in finite groups of bounded exponent. The latter result generalizes the abelian case, due to Alon, Fox, and Zhao, and gives a quantitative version of previous work of the author, Pillay, and Terry.  
\end{abstract}

\maketitle

\section{Introduction}\label{sec:intro}
\setcounter{theorem}{0}
\numberwithin{theorem}{section}

Freiman's Theorem (see \cite{FreiKK}, \cite{FreiFST}) is a combinatorial result in additive number theory which states that if $A$ is a finite subset of a torsion-free abelian group $G$, and $|A+A|\leq k|A|$ (i.e. $A$ has \emph{small doubling}), then $A$ is contained in an $n$-dimensional arithmetic progression of length $c|A|$, where $c$ and $n$ depend only on $k$. In \cite{Ruz94}, Ruzsa gave a new proof of this result, and a similar strategy was later used by Green and Ruzsa \cite{GrRuz} to prove a generalization of Freiman's Theorem, involving coset progressions, for arbitrary abelian groups. A key part of this work is that a set of small doubling in an abelian group can be ``Freiman-isomorphically" mapped to a dense set in a finite abelian group (see \cite[Proposition 1.2]{GrRuz}). This allows one to apply the following result, which Ruzsa \cite{Ruz94} adapted from  Bogolyubov \cite{Bog39}. For comparison to our work, we state this result in two cases.

\begin{theorem}[Bogolyubov's Lemma]\label{thm:Bogo}
Fix $r\in \Z^+$ and $\alpha\in\R^+$.
\begin{enumerate}[$(a)$]
\item \textnormal{(Bounded exponent case)} Suppose $G$ is a finite abelian group of exponent at most $r$, and $A\seq G$ is such that $|A|\geq\alpha|G|$. Then there is a subgroup $H\leq G$ such that $[G:H]\leq r^{\alpha^{\nv 2}}$ and  $H\seq 2A-2A$.
\item \textnormal{(General case)} Suppose $G$ is a finite abelian group, and $A\seq G$ is such that $|A|\geq\alpha|G|$, then there is a $(1/4,n)$-Bohr neighborhood $B$ in $G$ such that $n<\alpha^{\nv 2}$ and $B\seq 2A-2A$.
\end{enumerate}
\end{theorem}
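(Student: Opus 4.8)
The plan is to run the classical Fourier-analytic argument on the dual group $\widehat{G}$. Write $\mathbb{E}_x$ for averaging over the uniform probability measure on $G$, and for $\chi\in\widehat{G}$ put $\widehat{1_A}(\chi)=\mathbb{E}_x\,1_A(x)\overline{\chi(x)}$, so that $\widehat{1_A}(1)=|A|/|G|=:\mu\geq\alpha$ and, by Parseval, $\sum_\chi|\widehat{1_A}(\chi)|^2=\mu$. I would define the large spectrum $\Lambda=\{\chi\in\widehat{G}:|\widehat{1_A}(\chi)|^2\geq\alpha\mu^2\}$ and set $\Lambda'=\Lambda\setminus\{1\}$. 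Parseval gives $|\Lambda|\,\alpha\mu^2\leq\mu$, hence $|\Lambda|\leq(\alpha\mu)^{-1}\leq\alpha^{-2}$, so $n:=|\Lambda'|<\alpha^{-2}$ --- the cardinality bound demanded in $(b)$.

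The heart of the matter is the function $f(x)=\sum_\chi|\widehat{1_A}(\chi)|^4\chi(x)$. Since $\widehat{1_{-A}}=\overline{\widehat{1_A}}$, Fourier inversion identifies $f$ with the convolution $1_A*1_A*1_{-A}*1_{-A}$, so that $f(x)=|G|^{-3}\,\#\{(a_1,a_2,a_3,a_4)\in A^4:a_1+a_2-a_3-a_4=x\}$; thus it suffices to produce a large set of $x$ with $f(x)>0$. As $f$ is real-valued,
\[
f(x)=\mu^4+\sum_{\chi\in\Lambda'}|\widehat{1_A}(\chi)|^4\,\mathrm{Re}\,\chi(x)+\sum_{\chi\notin\Lambda}|\widehat{1_A}(\chi)|^4\,\mathrm{Re}\,\chi(x).
\]
By the definition of $\Lambda$ together with Parseval, the last sum has absolute value at most $\bigl(\max_{\chi\notin\Lambda}|\widehat{1_A}(\chi)|^2\bigr)\sum_\chi|\widehat{1_A}(\chi)|^2<\alpha\mu^2\cdot\mu=\alpha\mu^3\leq\mu^4$, using $\mu\geq\alpha$. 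Hence for any $x$ with $\mathrm{Re}\,\chi(x)\geq 0$ for all $\chi\in\Lambda'$ the middle sum is nonnegative and $f(x)>\mu^4-\alpha\mu^3\geq 0$, so $x\in 2A-2A$.

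It remains to package the set of such $x$. For $(b)$, I would take $B$ to be the $(1/4,n)$-Bohr neighborhood in $G$ with frequency set $\Lambda'$; every $x\in B$ satisfies $|\chi(x)-1|\leq 1/4$, hence $\mathrm{Re}\,\chi(x)\geq 0$, for each $\chi\in\Lambda'$, so the computation above gives $B\subseteq 2A-2A$. For $(a)$, I would instead take $H$ to be the annihilator $\{x\in G:\chi(x)=1\text{ for all }\chi\in\Lambda'\}$; then $\mathrm{Re}\,\chi(x)=1$ on $H$ for $\chi\in\Lambda'$ (equivalently, $H$ sits inside the Bohr neighborhood of $(b)$), so $H\subseteq 2A-2A$, while Pontryagin duality identifies $[G:H]$ with $|\langle\Lambda'\rangle|$ --- a subgroup of $\widehat{G}$ generated by the $|\Lambda'|<\alpha^{-2}$ characters of $\Lambda'$, each of order at most $\exp(G)\leq r$ --- whence $[G:H]\leq r^{|\Lambda'|}\leq r^{\alpha^{-2}}$.

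There is no serious obstacle here --- the argument is textbook harmonic analysis --- but two points want care. First, the spectral threshold $\alpha\mu^2$ must be calibrated so that $\Lambda$ is at once small enough to yield $n<\alpha^{-2}$ and $[G:H]\leq r^{\alpha^{-2}}$, and light enough in $\ell^2$-mass that the off-spectrum tail $\sum_{\chi\notin\Lambda}|\widehat{1_A}(\chi)|^4$ is strictly dominated by the main term $\mu^4$; the hypothesis $\mu\geq\alpha$ is exactly what makes both succeed. Second, in $(a)$ the bounded-exponent hypothesis enters only at the very end, namely to bound the index of the annihilator subgroup $H$ by $r^{|\Lambda'|}$.
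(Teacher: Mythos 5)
The paper does not prove Theorem \ref{thm:Bogo} at all; it is quoted as a classical result attributed to Bogolyubov and Ruzsa. Your argument is precisely the standard Fourier-analytic proof of that result, so there is nothing to compare against a ``paper proof,'' only to check your version for correctness.

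Your proof is correct. The spectral threshold $\alpha\mu^2$ is well chosen: with $1\in\Lambda$ (which holds since necessarily $\alpha\leq 1$), Parseval gives $|\Lambda|\leq(\alpha\mu)^{-1}\leq\alpha^{-2}$ and hence $n=|\Lambda'|<\alpha^{-2}$, while the tail $\sum_{\chi\notin\Lambda}|\widehat{1_A}(\chi)|^4$ is strictly below $\alpha\mu^3\leq\mu^4$, so $f(x)>0$ whenever $\mathrm{Re}\,\chi(x)\geq 0$ on $\Lambda'$. The annihilator calculation for $(a)$, together with the fact that $\widehat{G}$ has exponent equal to $\exp(G)\leq r$, gives $[G:H]=|\langle\Lambda'\rangle|\leq r^n\leq r^{\alpha^{-2}}$ as required.

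One small slip in phrasing: for $(b)$ you write that $x\in B$ satisfies $|\chi(x)-1|\leq 1/4$. Under the paper's Definition \ref{def:Bohr}, $B^n_{\tau,\delta}$ is cut out by the arclength metric on $\T^n$, so $x\in B^n_{\tau,1/4}$ gives $\|\tau_j(x)\|_{\R/\Z}<1/4$, i.e.\ $\chi_j(x)=e^{2\pi i\theta}$ with $|\theta|<1/4$; the chord length $|\chi_j(x)-1|$ can then be as large as $2\sin(\pi/4)=\sqrt2$, not $1/4$. However the conclusion you actually use, $\mathrm{Re}\,\chi_j(x)=\cos(2\pi\theta)>0$, does follow directly from $|\theta|<1/4$, so the argument survives; it is only the intermediate chord-distance claim (which implicitly assumes a different Bohr-set convention) that should be dropped.
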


Bohr neighborhoods (see Definition \ref{def:Bohr}) are certain kinds of well-structured sets which, in the abelian case, contain large coset progressions (preserved by Freiman isomorphism). This yields the ``\emph{Bogolyubov-Ruzsa Lemma}\footnote{This name is from Sanders \cite{SanBR}, who gives a different proof of the result yielding better bounds.} for finite abelian groups": if $G$ is abelian and $A\seq G$ is finite, with $|A+A|\leq k|A|$, then $2A-2A$ contains an $n$-dimensional coset progression of size $c|A|$, where $c$ and $n$ depend only on $k$. The conclusion of Freiman's theorem (exchanging  the arithmetic progression for one containing $A$) then follows after a little more work (see \cite[Proposition 5.1]{GrRuz}).

For an abelian group $G$, Freiman's Theorem also yields a classification of \textbf{$k$-approximate subgroups} of $G$, i.e., finite symmetric subsets $A\seq G$ such that $A+A$ can be covered by $k$ translates of $A$. Approximate subgroups of arbitrary groups have been studied by many authors, culminating in the work of Breuillard, Green, and Tao \cite{BGT}. 

The goal of the present paper is to give generalizations of Bogolyubov's Lemma to arbitrary finite groups, as well as similar statements about finite subsets of arbitrary groups whose product set growth can be controlled. For this, we focus on sets of small \emph{tripling}, which satisfy Plunnecke-Ruzsa inequalities for product sets  (as observed by Helfgott \cite{HelfGG}, see Proposition \ref{prop:Ruz}$(a)$). Motivated by the work of Hrushovski \cite{HruAG} on approximate groups, we also consider finite sets $A$ of \emph{small alternation}, i.e. $|AA\inv A|\leq k|A|$ for some fixed $k$ (see Remark \ref{rem:alttrip}).

Our main results, Theorems \ref{thm:mainbdd} and \ref{thm:maingen}, are versions of the Bogolyubov-Ruzsa Lemma for finite sets of small tripling or alternation in arbitrary groups (with some further constraints). Our first application of these results  is the following qualitative analogue of Bogolyubov's Lemma (Theorem \ref{thm:Bogo}) for arbitrary groups.

\begin{theorem}\label{thm:Bogogen}
Fix a positive integer $r$ and a positive real number $\alpha$.
\begin{enumerate}[$(a)$]
\item \textnormal{(Bounded exponent case)} Suppose $G$ is a finite group of exponent at most $r$, and $A\seq G$ is such that $|A|\geq\alpha|G|$. Then there is a normal subgroup $H\leq G$ such that $[G:H]\leq O_{r,\alpha}(1)$ and $H\seq (AA\inv)^2\cap A^2A^{\nv 2}\cap (A\inv A)^2\cap A^{\nv 2}A^2$.
\item \textnormal{(General case)} Suppose $G$ is a finite group, and $A\seq G$ is such that $|A|\geq\alpha|G|$. Then there is a normal subgroup $H\leq G$ and a $(\delta,n)$-Bohr neighborhood $B$ in $H$, such that $[G:H],\delta\inv,n\leq O_{\alpha}(1)$ and $B\seq (AA\inv)^2\cap A^2A^{\nv 2}\cap (A\inv A)^2\cap A^{\nv 2}A^2$.
\end{enumerate} 
\end{theorem}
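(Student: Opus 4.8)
The plan is to deduce this from the main Bogolyubov--Ruzsa results, Theorems \ref{thm:mainbdd} and \ref{thm:maingen}, since a dense subset of a finite group automatically has small tripling and small alternation. First I would record that if $|A|\geq\alpha|G|$ then $|A^3|\leq|G|\leq\alpha\inv|A|$ and $|AA\inv A|\leq|G|\leq\alpha\inv|A|$, so $A$ -- and likewise $A\inv$ -- has $\alpha\inv$-tripling and $\alpha\inv$-alternation. It is harmless to work inside $G_0:=\langle A\rangle$ (or any further bounded-index subgroup of $G_0$ that the main theorems produce a normal subgroup of, such as $\langle AA\inv\rangle$): since $A\seq G_0$ we get $[G:G_0]\leq\alpha\inv$, every product set in the conclusion lies in $G_0$, and in case $(a)$ the group $G_0$ again has exponent at most $r$.

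Next I would apply the relevant main theorem inside $G_0$. In the bounded exponent case, Theorem \ref{thm:mainbdd} applied to $A$ yields a normal subgroup $H_1\trianglelefteq G_0$ of index $O_{r,\alpha}(1)$ contained in (the appropriate ones among) the four-fold products in the statement; applying it also to $A\inv$ and setting $H_0:=H_1\cap H_2$ gives a single normal subgroup of $G_0$, still of bounded index, contained in all four of $(AA\inv)^2$, $A^2A^{\nv 2}$, $(A\inv A)^2$ and $A^{\nv 2}A^2$ -- here one uses the identities $(A\inv(A\inv)\inv)^2=(A\inv A)^2$ and $(A\inv)^2((A\inv)\inv)^2=A^{\nv 2}A^2$, so that running the theorem on $A\inv$ covers the remaining two products. (If Theorem \ref{thm:mainbdd} already delivers the full symmetric conclusion, this second application is unnecessary.) Finally, to obtain normality in $G$ rather than just $G_0$, I would replace $H_0$ by its normal core $H:=\bigcap_{g\in G}gH_0g\inv$, which is normal in $G$, has index dividing $[G:H_0]!$ and hence $O_{r,\alpha}(1)$, and is contained in $H_0$ and so in all four product sets. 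This proves $(a)$.

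For $(b)$ the route is identical, except that Theorem \ref{thm:maingen} returns, in place of a finite-index subgroup, a normal subgroup together with a $(\delta,n)$-Bohr neighborhood of bounded complexity inside it. I would intersect the finitely many subgroups produced (from $A$, from $A\inv$, and from passing to the normal core of $G$) and intersect the corresponding Bohr neighborhoods, using that restricting the defining homomorphisms to a subgroup again gives a Bohr neighborhood (Definition \ref{def:Bohr}) and that the intersection of a $(\delta_1,n_1)$- and a $(\delta_2,n_2)$-Bohr neighborhood contains a $(\min(\delta_1,\delta_2),\,n_1+n_2)$-one; this keeps $[G:H]$, $\delta\inv$ and $n$ all $O_\alpha(1)$.

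All of the substantive work is inside Theorems \ref{thm:mainbdd} and \ref{thm:maingen}; in the present deduction the only point requiring care is the bookkeeping, namely amalgamating the structures coming from the (at most two) applications of the main theorem and from conjugation into a single normal subgroup of $G$ while keeping every index -- and, in case $(b)$, every Bohr parameter -- bounded. I expect this to be the main (but routine) obstacle, and it goes through precisely because all the indices in sight are bounded in terms of $r$ and $\alpha$.
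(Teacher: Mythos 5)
Your overall strategy is exactly the paper's: apply Theorem \ref{thm:mainbdd} (resp.\ \ref{thm:maingen}) to $A$ inside $\langle A\rangle$, then pass to the normal core $\bigcap_{g\in G}gKg\inv$ in $G$. Two points, however, need attention.

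First, you omit the verification of the bounded-generation hypothesis required by parts $(2)(b)$ of the main theorems (and needed to convert part $(2)(a)$'s covering conclusion into a bound on $[\langle A\rangle:H]$). Density of $A$ does give small tripling and alternation as you note, but you also need $\langle A\rangle=\bar{A}^m$ for some $m=O_\alpha(1)$. This is a standard fact (the paper invokes it with $m\leq\lceil 3\alpha^{-1}+1\rceil$, citing \cite[Remark~4]{MassWa}), but it is a genuine hypothesis and should be checked.

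Second, your plan to run the theorem on both $A$ and $A^{-1}$ and intersect is built on a misreading of what the theorem returns, and it does not actually close the gap you are worried about. Theorem \ref{thm:mainbdd}$(2)(a)$ already delivers a subgroup contained in all four sets $(AA\inv)^2$, $A^2A^{\nv 2}$, $(A\inv A)^2$, $A^{\nv 2}A^2$ simultaneously, so no second application is needed; one applies $(2)(a)$ and then takes the normal core oneself (which is also what the proof of $(2)(b)$ does). If instead one were only handed the literal statement of $(2)(b)$, which as printed gives only $H\seq(AA\inv)^2$, then applying it to $A^{-1}$ would only add $(A\inv A)^2$ to the list; the products $A^2A^{\nv 2}$ and $A^{\nv 2}A^2$ would remain uncovered, contrary to your claim that the $A^{-1}$ run ``covers the remaining two products.'' Your argument implicitly assumes the theorem hands you exactly two of the four sets (say $(AA\inv)^2\cap A^2A^{\nv 2}$), which matches neither version of its statement. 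The fix is simply to use $(2)(a)$, which gives all four at once. With that correction, and with the bounded-generation step supplied, the rest of your argument -- including the intersection of Bohr neighborhoods in part $(b)$ using a $(\min(\delta_1,\delta_2),n_1+n_2)$-Bohr set inside $H_1\cap H_2$ -- goes through and matches the paper's proof.
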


This is proved in Section \ref{sec:results}. For later applications, and also to illustrate the use of Bohr neighborhoods in the nonabelian setting, we prove (in Section \ref{sec:results}) the following easy corollary of Theorem \ref{thm:Bogogen}$(b)$. Call a (nontrivial)  group $G$ \textbf{purely nonabelian} if no normal subgroup $H\leq G$ has a nontrivial abelian quotient (i.e. $[H,H]=H$ for all normal $H\leq G$).  The class of purely nonabelian groups contains all nonabelian simple groups,  and is closed under direct product by Goursat's Lemma. 

\begin{corollary}\label{cor:pure}
Fix a positive real number $\alpha$. Suppose $G$ is a purely nonabelian finite group and $A\seq G$ is such that $|A|\geq\alpha|G|$. Then there is a normal subgroup $H\leq G$ such that $[G:H]\leq O_\alpha(1)$ and $H\seq (AA\inv)^2\cap A^2A^{\nv 2}\cap (A\inv A)^2\cap A^{\nv 2}A^2$.
\end{corollary}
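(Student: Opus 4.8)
The plan is to derive Corollary \ref{cor:pure} directly from Theorem \ref{thm:Bogogen}$(b)$ by arguing that, in a purely nonabelian group, every Bohr neighborhood of small rank inside a normal subgroup must in fact contain a further normal subgroup of bounded index. So I would start by applying Theorem \ref{thm:Bogogen}$(b)$ to $G$ and $A$: this produces a normal subgroup $H \leq G$ and a $(\delta,n)$-Bohr neighborhood $B$ in $H$ with $[G:H], \delta\inv, n \leq O_\alpha(1)$ and $B \seq (AA\inv)^2 \cap A^2 A^{\nv 2} \cap (A\inv A)^2 \cap A^{\nv 2} A^2$. It suffices to find a normal subgroup $H' \leq G$ with $[G:H'] \leq O_\alpha(1)$ and $H' \seq B$, since the four product sets are unchanged and $H'$ will then witness the conclusion.

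Next I would unpack the definition of a $(\delta,n)$-Bohr neighborhood in $H$ (Definition \ref{def:Bohr}): such a $B$ is cut out by $n$ homomorphisms from $H$ to the circle group $\T$ (characters), via $B = \{h \in H : \|\tau_i(h)\| < \delta \text{ for } i \le n\}$ where $\tau_1,\dots,\tau_n : H \to \T$ and $\|\cdot\|$ is distance to $0$. The kernel $K = \bigcap_{i=1}^n \ker \tau_i$ is a subgroup of $H$ contained in $B$, and $H/K$ embeds into $\T^n$, hence is abelian. Now here is where purely nonabelianness enters: $K$ need not be normal in $G$, but I would pass to $K_0 = \bigcap_{g \in G} gKg\inv$, the normal core of $K$ in $G$. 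Since $H$ is normal in $G$, each conjugate $gKg\inv$ is a subgroup of $H$ whose quotient $H/gKg\inv$ is again abelian (it is isomorphic to $H/K$ via the conjugation automorphism), and there are at most $[G:H] \cdot |\,\{gKg\inv\}\,|$-many such conjugates — in fact at most $[H : K]$-many distinct conjugates, but more usefully the index $[H:K]$ is bounded: $[H:K] \leq [G:H]\inv|G| \cdot (\text{something})$... actually the cleanest bound is that $H/K \hookrightarrow \T^n$ forces $[H:K]$ to be a bounded-rank abelian group, but without a bound on $|H|$ this index need not be finite. The right move is instead: $H/K_0$ is a subgroup of a finite product of copies of $H/gK g\inv$, hence is a (finitely generated, bounded-rank) abelian group — wait, it is abelian since each factor is, and $K_0$ is normal in $H$. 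So $H/K_0$ is abelian and normal in $G$, contradicting pure nonabelianness unless $K_0 = H$, i.e. $K = H$. But $K = H$ means all $\tau_i$ are trivial, so $B = H$, and we take $H' = H$.

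The main obstacle — and the step I would be most careful about — is handling the possibility that $K_0$ has infinite (or merely unbounded) index if one is not careful, and more importantly ensuring that the pure-nonabelian hypothesis is applied to a \emph{normal subgroup of $G$} rather than merely a normal subgroup of $H$. Pure nonabelianness says $[N,N] = N$ for all $N \trianglelefteq G$; applying it to $N = H$ gives $[H,H] = H$, so $H$ has no nontrivial abelian quotient at all, which immediately kills the homomorphisms $\tau_i|_H$: each $\tau_i(H)$ is an abelian quotient of $H$, hence trivial. Thus $B = H$ outright, and we are done with $H' = H$ — the detour through $K$ and $K_0$ is actually unnecessary. So the clean proof is: apply Theorem \ref{thm:Bogogen}$(b)$; note $H \trianglelefteq G$ with $[G:H] \leq O_\alpha(1)$; since $G$ is purely nonabelian, $[H,H] = H$, so every character $\tau : H \to \T$ is trivial; hence the defining characters of the Bohr neighborhood $B \seq H$ are trivial, forcing $B = H$; therefore $H \seq (AA\inv)^2 \cap A^2 A^{\nv 2} \cap (A\inv A)^2 \cap A^{\nv 2} A^2$, as desired. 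I would double-check only that the relevant notion of Bohr neighborhood in the paper is indeed defined via $\T$-valued characters (abelian in an essential way), which is standard and strongly suggested by the phrase "$(\delta,n)$-Bohr neighborhood in $H$" and the abelian statement in Theorem \ref{thm:Bogo}$(b)$.
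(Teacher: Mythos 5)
Your final ``clean proof'' is exactly the argument given in the paper: apply Theorem \ref{thm:Bogogen}$(b)$, then observe that since $H\trianglelefteq G$ and $G$ is purely nonabelian, $[H,H]=H$, so the homomorphism $\tau\colon H\to\T^n$ defining $B$ must be trivial, forcing $B=H$. The earlier detour through kernels and normal cores $K_0$ is, as you yourself note, unnecessary and can be deleted.
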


Before continuing to the next application, we state the following consequences for symmetric subsets of groups of bounded exponent (part $(a)$ follows immediately from Theorem \ref{thm:mainbdd} and part $(b)$ is a special case of Theorem \ref{thm:Bogogen}$(a)$, see Section \ref{sec:results}). 

\begin{corollary}\label{cor:sym}$~$
\begin{enumerate}[$(a)$]
\item Fix positive integers $k$ and $r$. Suppose $G$ is a group of exponent $r$ and $A\seq G$ is finite and symmetric, with $|A^3|\leq k|A|$. Then there is a subgroup $H\leq G$ such that $A$ is covered by $O_{k,r}(1)$ translates of $H$ and $H\seq A^4$.
\item Fix a positive integer $r$ and a positive real number $\alpha$. Suppose $G$ is a finite group of exponent $r$ and $A\seq G$ is symmetric, with $|A|\geq\alpha|G|$. Then there is a normal subgroup $H\leq G$, of index $O_{\alpha,r}(1)$, such that $H\seq A^4$.
\end{enumerate}
\end{corollary}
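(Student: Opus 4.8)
The plan is to read both parts directly off the paper's main Bogolyubov--Ruzsa-type theorems, using the symmetry of $A$ to collapse the various two-sided product sets appearing in their conclusions down to $A^4$.

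For part $(a)$, note first that symmetry gives $A\inv=A$, so $|AA\inv A|=|A^3|\leq k|A|$; thus $A$ has small tripling and small alternation simultaneously, and every length-$4$ product of copies of $A$ and $A\inv$ (such as $(AA\inv)^2$, $A^2A^{\nv 2}$, $A^{\nv 2}A^2$, or $A^4$ itself) is just $A^4$. Applying Theorem \ref{thm:mainbdd} in the bounded-exponent case then produces a subgroup $H\leq G$ with $A$ covered by $O_{k,r}(1)$ translates of $H$ and $H\seq A^4$. (If the relevant conclusion of Theorem \ref{thm:mainbdd} is stated with $H$ covered by boundedly many translates of $A$ rather than the reverse, one passes between the two formulations by the standard Ruzsa covering lemma — permissible since $|H|$ and $|A|$ are comparable up to a power of $k$ — absorbing the extra factor into the implied constant.)

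For part $(b)$, apply Theorem \ref{thm:Bogogen}$(a)$: since $|A|\geq\alpha|G|$ and $G$ has exponent at most $r$, there is a normal subgroup $H\leq G$ with $[G:H]\leq O_{\alpha,r}(1)$ and
\[
H\seq (AA\inv)^2\cap A^2A^{\nv 2}\cap (A\inv A)^2\cap A^{\nv 2}A^2.
\]
As $A=A\inv$ we have $A^{\nv 2}=(A\inv)^2=A^2$, so each of the four intersectands equals $A^2A^2=A^4$, and therefore $H\seq A^4$ with $[G:H]=O_{\alpha,r}(1)$, as required.

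There is no substantial obstacle: both parts are formal consequences of results proved earlier in the paper. The only work is the bookkeeping of identifying the various two-sided product sets with $A^4$ in the symmetric case, and (for $(a)$) the routine translation between ``$A$ is covered by few translates of $H$'' and ``$H$ is covered by few translates of $A$'', which costs only a change in the implied constant.
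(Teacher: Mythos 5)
Your proof is correct and takes essentially the same route as the paper, which simply observes that part $(a)$ is immediate from Theorem \ref{thm:mainbdd}$(2)(a)$ (with $m=1$, noting $\bar{A}=A$ and all four intersectands collapse to $A^4$ by symmetry) and part $(b)$ is immediate from Theorem \ref{thm:Bogogen}$(a)$. Your hedge about possibly needing a Ruzsa covering step to swap directions is moot: Theorem \ref{thm:mainbdd}$(2)(a)(i)$ already states the covering in the needed direction.
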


\begin{remark}\label{rem:feit}
We do not know if $H$ can also be made normal in part $(a)$ of the previous corollary.  In addition to improving Theorem \ref{thm:NIPregexp} below (see Remark \ref{rem:NIPpre}), such a result could be quite interesting, depending on the methods used. For example, together with the Feit-Thompson Theorem and the Brauer-Fowler Theorem, this strengthening of Corollary \ref{cor:sym}$(a)$ would imply that for any positive integer $r$, there are only finitely many finite simple groups of exponent $r$. This is a known fact, but its proof requires the classification of finite simple groups (e.g., \cite[Theorem 5.4]{BaGoPy}).
\end{remark}

Our final applications are in the subject of arithmetic regularity (developed by Green \cite{GreenSLAG} for abelian groups). There has been a recent interest in strengthened arithmetic regularity lemmas for subsets of groups satisfying special tameness assumptions. This was initiated by the work of Terry and Wolf \cite{TeWo} on ``stable arithmetic regularity" in $\F_p^n$, which is continued in  \cite{CPT} and \cite{TeWo2}. Arithmetic regularity in the setting of bounded VC-dimension is considered in  \cite{AFZ}, \cite{CPTNIP}, and \cite{SisNIP}. Given a group $G$ and $A\seq G$, define the \textbf{VC-dimension of $A$} to be the VC-dimension of the collection of left translates of $A$, i.e., the supremum  of all integers $d$  such that, for some $d$-element set $X\seq G$, one has $\cP(X)=\{X\cap gA:g\in G\}$. In \cite{AFZ}, Alon, Fox, and Zhao show that if $G$ is a finite \emph{abelian} group of exponent at most $r$, and $A\seq G$ has VC-dimension at most $d$, then there is a subgroup $H\leq G$ of index $(1/\epsilon)^{d+o_{r,d}(1)}$, and a subset $D\seq G$ which is a (possibly empty) union of cosets of $H$, such that $|A\smd D|\leq \epsilon|G|$. A main tool in their proof is Theorem \ref{thm:Bogo}$(a)$, and we will use Corollary \ref{cor:sym}$(a)$ to give the following generalization to arbitrary groups.

\begin{theorem}\label{thm:NIPregexp}
Fix positive integers $r$ and $d$. Suppose $G$ is a finite group of exponent at most $r$, and $A\seq G$ has VC-dimension at most $d$. Then, for any $\epsilon,\nu>0$, there is a subgroup $H$ of $G$, of index $O_{r,d,\nu}((1/\epsilon)^{d+\nu})$, which satisfies the following properties.
\begin{enumerate}[$(i)$]
\item \textnormal{(structure)} There is a set $D\seq G$, which is a union of right cosets of $H$, such that $|A\smd D|\leq \epsilon|G|$.
\item \textnormal{(regularity)} There is a set $Z\seq G$, with $|Z|<\frac{1}{2}\epsilon^{1/2}|G|$, such that for any $x\in G\backslash Z$, either $|Hx\cap A|\leq\epsilon^{1/4}|H|$ or $|Hx\backslash A|\leq\epsilon^{1/4}|H|$.
\end{enumerate}
\end{theorem}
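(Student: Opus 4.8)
The plan is to adapt the argument of Alon, Fox, and Zhao \cite{AFZ}, substituting Corollary \ref{cor:sym}$(a)$ for the abelian input Theorem \ref{thm:Bogo}$(a)$. For $\delta>0$ set $S_\delta=\{g\in G:|gA\smd A|\le\delta|G|\}$; each $S_\delta$ is a symmetric subset of $G$ containing $1$, the $S_\delta$ are nested in $\delta$, and $S_\delta S_\delta S_\delta\seq S_{3\delta}$ by the triangle inequality. The first task is a lower bound on $|S_\delta|$ coming from the VC hypothesis. Since the family of left translates $\{gA:g\in G\}$ has VC-dimension at most $d$, Haussler's packing lemma bounds any pairwise $\eta|G|$-separated subfamily by $M=O_d(\eta^{-d})$; a maximal such family covers $G$ by $\eta|G|$-balls, so its largest ball $P\seq G$ has $|P|\ge|G|/M\ge\Omega_d(\eta^d)|G|$, and translating shows $P\inv P\seq S_{2\eta}$ (if $g,g'\in P$ then $|gA\smd g'A|\le2\eta|G|$, hence $|A\smd g\inv g'A|\le2\eta|G|$). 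Thus $|S_\delta|\ge\Omega_d(\delta^d)|G|$ for every $\delta>0$.

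Next I would run a telescoping pigeonhole over scales to manufacture a set of bounded tripling. The sets $S_\delta$ grow from size $\Omega_{d,\nu}(\epsilon^{d+\nu})|G|$ at $\delta\approx\epsilon^{1+\nu/d}$ (by the previous step) up to $|G|$, so taking a geometric progression $s_0<s_1<\dots<s_J$ of ratio $4$ through the window $[(\epsilon/4)^{1+\nu/d},\,\epsilon/4]$ — where $J\asymp\tfrac{\nu}{d}\log(1/\epsilon)$ — the telescoped product $\prod_j |S_{s_{j+1}}|/|S_{s_j}|=|S_{s_J}|/|S_{s_0}|$ is at most $O_{d,\nu}((1/\epsilon)^{d+\nu})$, and hence some ratio is at most $(O_{d,\nu}((1/\epsilon)^{d+\nu}))^{1/J}=O_{d,\nu}(1)$, a bound \emph{independent of $\epsilon$}. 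Fixing such a $j$, put $s=s_j$ and $T=S_s$: then $|T^3|\le|S_{3s}|\le|S_{s_{j+1}}|\le k|T|$ with $k=O_{d,\nu}(1)$, while $|T|\ge|S_{s_0}|\ge\Omega_{d,\nu}(\epsilon^{d+\nu})|G|$ and $4s\le\epsilon$.

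Now apply Corollary \ref{cor:sym}$(a)$ to the finite symmetric set $T$ in the exponent-$r$ group $G$: it produces a subgroup $H\le G$ such that $T$ is covered by $O_{k,r}(1)=O_{d,\nu,r}(1)$ translates of $H$ and $H\seq T^4\seq S_{4s}\seq S_\epsilon$. Covering by boundedly many translates forces $|H|\ge|T|/O_{d,\nu,r}(1)\ge\Omega_{d,\nu,r}(\epsilon^{d+\nu})|G|$, i.e.\ $[G:H]\le O_{r,d,\nu}((1/\epsilon)^{d+\nu})$, which is the required index bound. Finally, from $H\seq S_\epsilon$ one deduces $(i)$ and $(ii)$ by a double count: summing $|hA\smd A|\le\epsilon|G|$ over $h\in H$ and regrouping the left side by right cosets of $H$ yields $\sum_C 2a_C(|H|-a_C)\le\epsilon|G||H|$, where $C$ ranges over the right cosets of $H$ and $a_C=|C\cap A|$; writing $\mu_C=a_C/|H|$, this says $\tfrac{1}{[G:H]}\sum_C\mu_C(1-\mu_C)\le\tfrac{\epsilon}{2}$. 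Taking $D$ to be the union of the cosets with $\mu_C\ge\tfrac12$ gives $|A\smd D|\le2|H|\sum_C\mu_C(1-\mu_C)\le\epsilon|G|$, proving $(i)$; taking $Z$ to be the union of the cosets with $\mu_C\in(\epsilon^{1/4},1-\epsilon^{1/4})$, each of which contributes $\mu_C(1-\mu_C)>\tfrac12\epsilon^{1/4}$, gives $|Z|\le\epsilon^{3/4}|G|<\tfrac12\epsilon^{1/2}|G|$ together with the stated dichotomy for $x\notin Z$. (Values of $\epsilon$ above a threshold depending on $d$ and $\nu$ can be handled by running the argument at that threshold scale instead, since the conclusions only strengthen.)

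The main obstacle is the interplay in the two middle steps: at any single small scale $\delta$ there is no control on $|S_{3\delta}|/|S_\delta|$, so bounded tripling can only be extracted by scanning many scales — yet a long run of scales pushes $s$, and hence $S_{4s}$, up toward all of $G$, which would destroy the almost-stabilizer property $H\seq S_\epsilon$ that feeds the final double count. Reconciling these two demands is exactly why the scale window must be pinned at $[\epsilon^{1+\nu/d},\epsilon]$, and it is the source both of the $\nu$ loss in the exponent (Haussler at the bottom of the window only guarantees $|T|\gtrsim\epsilon^{d+\nu}|G|$) and of the harmless blow-up of the implied constants as $\nu\to0$.
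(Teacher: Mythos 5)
Your proof is correct and takes essentially the same route as the paper: density of stabilizers from Haussler's packing lemma, a pigeonhole over $\asymp\log(1/\epsilon)$ geometrically spaced scales to extract a symmetric set of $O_{d,\nu}(1)$-tripling still sitting (with its fourth power) inside $\Stab_\epsilon(A)$, then Corollary \ref{cor:sym}$(a)$ followed by the double-count of Lemma \ref{lem:separate}. The only cosmetic divergence is that the paper pigeonholes over the powers $S,S^3,S^9,\dots$ of a single stabilizer $S=\Stab_\delta(A)$ rather than over the sizes $|S_{s_j}|$ at geometrically spaced thresholds $s_j$, which, via $S_\delta^3\seq S_{3\delta}$, amounts to the same argument.
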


\begin{remark}\label{rem:NIPpre}
There are several comments to be made about Theorem \ref{thm:NIPregexp}.

\begin{enumerate}[$(1)$]
\item In \cite{AFZ}, Alon, Fox, and Zhao conjecture that condition $(i)$ of Theorem \ref{thm:NIPregexp} holds for a \emph{normal} subgroup $H$ of index $\epsilon^{\nv O_{r,d}(1)}$. This would follow from the proof if one could show that Corollary \ref{cor:sym}$(a)$ holds with $H$ being normal (see Remark \ref{rem:feit}). However, it does follow from the proof  that one can replace the subgroup $H$ with the intersection of its conjugates to obtain a normal subgroup  of index $2^{\epsilon^{\nv O_{r,d}(1)}}$ satisfying conditions $(i)$ and $(ii)$ (see Remark \ref{rem:NIPreg}).

\item In \cite{CPTNIP} (joint with Pillay and Terry), we gave a version of Theorem \ref{thm:NIPregexp} in which $H$ is also normal, but without effective bounds on its index. One could instead use Corollary \ref{cor:sym}$(b)$ to deduce this, yielding a very different proof compared to what is done in \cite{CPTNIP} (see Remark \ref{rem:NIPreg}). 

\item The ``regularity" statement in condition $(ii)$ is not made explicit in \cite{AFZ}, but follows implicitly from their methods (see Lemma \ref{lem:separate}).\footnote{This was first observed by C. Terry.} 

\item The $O_{r,d, \nu}$ constant in the statement of the theorem comes from Corollary \ref{cor:sym}$(a)$ and so, unlike the abelian case, is not explicit (see Section \ref{sec:explicit}).
\end{enumerate}
\end{remark}

In Section \ref{sec:NIP} we also show that a qualitative version of Theorem \ref{thm:NIPregexp} holds for the class of purely nonabelian finite groups (see Theorem \ref{thm:NIPregpna}), which yields an interesting divergence between sets of bounded VC-dimension in nonabelian finite simple groups, compared to the abelian setting (see Corollary \ref{cor:NIPregpna}).  

We end this introduction with some discussion of our proofs. The results above involving groups of bounded exponent will be derived from Theorem \ref{thm:mainbdd}. By the work of Hrushovski \cite{HruAG} and Breuillard, Green, and Tao \cite{BGT}, approximate subgroups of groups with bounded exponent are close to genuine subgroups (see Theorem \ref{thm:BGTbdd}). Morever, in any group, finite sets  of small tripling are close to approximate subgroups by a result of Tao \cite{TaoPSE}. Together, these two facts imply a weaker version of  Theorem \ref{thm:mainbdd} (see Section \ref{sec:final}). To prove our result, we will sharpen what is essentially the first step of the work in \cite{BGT}, which is a theorem about sets of small doubling (proved independently by  Croot and Sisask \cite{CrSi} and Sanders \cite{SanBS}). Namely, if $G$ is a group, $A\seq G$ is finite, and $|A^2|\leq k|A|$, then $A^2A^{\nv 2}$ contains $S^n$, for some symmetric $S\seq G$ of size $\Omega_{k,n}(|A|)$. In Section \ref{sec:CSS}, we reprove this result using the same techniques, but for sets of small tripling or small alternation, which leads to stronger conclusions. We also work in the setting of measures (similar to Massicot and Wagner \cite{MassWa}), so that this analysis can be applied later to pseudofinite subsets of ultraproducts of groups. We then prove Theorem \ref{thm:mainbdd} in Section \ref{sec:proofbdd}.

For Theorem \ref{thm:maingen}, we will need to delve a bit deeper into the underlying methods  of \cite{BGT} and \cite{HruAG}, in particular, the ultraproduct construction. To prove the theorem, we will first prove a pseudofinite analogue, and then deduce the finitary version using an ``ultraproduct of counterexamples". To simplify this discussion, and illustrate the leverage obtained by working with pseudofinite sets, we focus on the case of symmetric sets of small tripling. In this case, the pseudofinite analog of Theorem \ref{thm:maingen} deals with a group $G$ and a \emph{pseudofinite} (symmetric) subset $A\seq G$. In other words, $G$ is an ultraproduct of groups, and $A$ is an ultraproduct of  finite (symmetric) subsets of those groups. We also assume $\langle A\rangle=A^m$ for some fixed $m$ (which holds, for example, if $A$ is an ultraproduct of uniformly dense subsets of finite groups). If $A$ has small tripling (formulated using a pseudofinite counting measure), then the Sanders-Croot-Sisask analysis from Section \ref{sec:CSS} yields a symmetric set $S$ such that $S^8\seq A^4$. Moreover, $S$ itself has small tripling (in fact it is an approximate subgroup), allowing us to iterate the process. After infinitely many iterations, we obtain a decreasing sequence of symmetric subsets of $A^4$, whose intersection is a demonstrably ``large" subgroup of $\langle A\rangle$ contained in $A^4$.

We now reach an obstacle, in that although $G$ is an ultraproduct of groups,  the subgroup constructed above need not be an ultraproduct of subgroups of those groups. In order to salvage this, we move to a  saturated elementary extension $G_*$ of $G$ (in a suitable first-order language).  We then find a normal subgroup $\Gamma$ of $\langle A_*\rangle$ of small index (where $A_*$ is the interpretation of $A$ in $G_*$), which is contained in $A_*^4$ and is an intersection of countably many definable sets. By standard facts, $\langle A_*\rangle/\Gamma$ is a compact Hausdorff group under a certain topology controlled by definable sets in $G_*$. By a result of Pillay \cite{PiRCP}, the connected component of $\langle A_*\rangle/\Gamma$ is a compact connected \emph{abelian} group, and thus an inverse limit of tori, supplying us with Bohr neighborhoods in $\langle A_*\rangle$. Finally, in order to transfer these Bohr neighborhoods  through the ultraproduct, we will use an approximation method from \cite{CPTNIP}, and a result about approximate homomorphisms from \cite{AlGlGo}. This will yield Bohr neighborhoods in the original groups, and allow us to prove Theorem \ref{thm:maingen}.

  \subsection*{Acknowledgements} I would like to thank Tim Burness, Daniel Palac\'{i}n, Anand Pillay, Caroline Terry, and Julia Wolf for helpful conversations. Thanks also to the University of Bristol School of Mathematics for their hospitality during the time this work was completed, and to the anonymous referees for valuable comments.

\section{Definitions, main theorems, and corollaries}\label{sec:results}

Before stating the main theorems, we set some notation and definitions (used throughout the paper). Let $G$ be a group. Given $n\geq 1$, let $G^{\times n}=G\times\stackrel{n}{\ldots}\times G$.  Given $X,Y\seq G$, let $XY=\{xy:x\in X,~y\in Y\}$. Set $X^0=\{1\}$ and inductively define $X^{n+1}=X^nX$. Let $X\inv=\{x\inv:x\in X\}$.  We say that $X\seq G$ is \textbf{symmetric} if $1\in X$ and $X=X\inv$. A \textbf{$Y$-translate of $X$} is a set of the form $aX$ where $a\in Y$.
Given a set $X\seq G$, we let $\langle X\rangle$ denote the subgroup of $G$ generated by $X$, and we use the notation $\bar{X}$ for the set $X\cup X\inv\cup\{1\}$. 

Let $\T^n$ denote the $n$-dimensional torus $\R/\Z\times \stackrel{n}{\ldots}\times\R/\Z$, considered as an additive group with identity $0$. Let $d_n$ denote the invariant metric on $\T^n$ induced by the product of the arclength metric on $\R/\Z$ (identified with $S^1$).

\begin{definition}\label{def:Bohr}
Given a group $G$, a positive integer $n$, and a positive real number $\delta$, we say that $B\seq G$ is a \textbf{$(\delta,n)$-Bohr neighborhood in $G$} if there is a homomorphism $\tau\colon G\to \T^n$ such that $B=B^n_{\tau,\delta}:=\{x\in G:d_n(0,\tau(x))<\delta\}$. 
\end{definition}

In the setting of abelian groups, Bohr neighborhoods are often used as replacements for subgroups in cases where few subgroups are available (e.g., in $\Z/p\Z$). In general, if $G$ is a group and $B=B^n_{\tau,\delta}$ is a $(\delta,n)$-Bohr neighborhood in $G$, then $B$ is symmetric, closed under conjugation, and contains the kernel $N$ of a homomorphism from $G$ to some $\T^n$ (so $G/N$ is abelian). While $B$ may not be closed under the group operation, one can obtain control in pairs by allowing the radius $\delta$ to vary. For instance, $B^2\seq B^n_{\tau,2\delta}$ by the triangle inequality. A more sophisticated manifestation of this idea can be found in the work of Bourgain \cite{BourgTAP}. Finally, if $G$ is finite then Bohr neighborhoods are ``large", for instance $|B^n_{\tau,\delta}|\geq \delta^n|G|$ (see \cite[Lemma 4.20]{TaoVu} or \cite[Proposition 4.5]{CPTNIP}). 

 Recall from the introduction that we are interested in finite subsets $A$ of some group $G$, which either have \emph{small tripling}, i.e., $|A^3|\leq k|A|$ for some fixed constant $k$, or have \emph{small alternation}, i.e., $|AA\inv A|\leq k|A|$ for some fixed $k$.

\begin{remark}\label{rem:alttrip}
The notion of small alternation is motivated by Hrushovski's definition of a \emph{near-subgroup} from \cite{HruAG}. Our terminology is explained by Proposition \ref{prop:Ruz}$(b)$, which shows that small alternation for a finite set $A$ in a group $G$ implies ``very small" tripling for $AA\inv$ (see  Section \ref{sec:Tao}, and especially Remark \ref{rem:HruTao}, for discussion on the relationship to approximate subgroups). Note that small tripling implies small doubling, and also small alternation due to the general Plunnecke-Ruzsa inequalties observed by Helfgott (see Proposition \ref{prop:Ruz}$(a)$). For abelian groups, small alternation clearly implies small doubling, and it is well-known that small doubling implies small tripling (see \cite{Plunn}), making  the three notions equivalent. However, in nonabelian groups, there are no general implications between small doubling and small alternation.  For example let $G$ be the free product $H\ast F_2$ where $H$ is some finite group and $F_2$ is the free group on two generators, say $a$ and $b$. Set $A=H\cup\{a\}$ and $B=aHb$. Then $A$ satisfies small doubling but not small alternation, and $B$ satisfies small alternation but not small doubling. 
\end{remark}

We now state our two main theorems, which are Bogolyubov-Ruzsa-type statements for finite sets of small alternation or small tripling. Each statement involves two crucial assumptions, the first being either small tripling or small alternation for some finite set, and the second being one of the following options: $(1)$ bounded exponent of a certain subgroup, $(2)$ bounded generation of a certain subgroup, or $(3)$ both. Altogether, this yields six statements, which we have divided into two theorems, one for the bounded exponent case and the other for the general case. The two results are proved in Sections \ref{sec:proofbdd} and \ref{sec:genproof}, respectively.

\begin{theorem}[Bounded exponent case]\label{thm:mainbdd}
Fix positive integers $k$, $m$, and $r$. Let $G$ be a group, and fix a nonempty finite subset $A\seq G$.
\begin{enumerate}[$(1)$]
\item \textnormal{(small alternation)} Suppose $|AA\inv A|\leq k|A|$ and $\langle AA\inv\rangle$ has exponent $r$. 
\begin{enumerate}[$(a)$]
 \item There is a subgroup $H\leq\langle AA\inv\rangle$ such that:
\begin{enumerate}[$(i)$]
\item $(AA\inv)^m$ is covered by $O_{k,m,r}(1)$ left cosets of $H$, and
\item $H\seq (AA\inv)^2$.
\end{enumerate}
\item Assume $\langle AA\inv\rangle=(AA\inv)^m$. Then there is a normal subgroup $H\leq \langle AA\inv\rangle$, of index $O_{k,m,r}(1)$, such that $H\seq (AA\inv)^2$.
\end{enumerate}
\item \textnormal{(small tripling)} Suppose $|A^3|\leq k|A|$ and $\langle A\rangle$ has exponent $r$.
\begin{enumerate}[$(a)$]
\item There is a subgroup $H\leq\langle A\rangle$ such that:
\begin{enumerate}[$(i)$]
\item $\bar{A}^m$ is covered by $O_{k,m,r}(1)$ left cosets of $H$, and
\item $H\seq (AA\inv)^2\cap A^2A^{\nv2}\cap (A\inv A)^2\cap A^{\nv 2} A^2$.
\end{enumerate}
\item Assume $\langle A\rangle=\bar{A}^m$. Then there is a normal subgroup $H\leq \langle AA\inv\rangle$, of index $O_{k,m,r}(1)$, such that $H\seq (AA\inv)^2$.
\end{enumerate}
\end{enumerate}
\end{theorem}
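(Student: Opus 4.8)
The plan is to combine the two ingredients flagged in the introduction. The first is the refinement, proved in Section~\ref{sec:CSS}, of the Sanders--Croot--Sisask lemma (\cite{CrSi}, \cite{SanBS}): whereas the classical statement says that $|A^2|\leq k|A|$ forces $A^2A^{\nv 2}$ to contain $S^n$ for a symmetric $S$ with $|S|\geq\Omega_{k,n}(|A|)$, the versions in Section~\ref{sec:CSS} assume small tripling or small alternation and yield stronger conclusions. Concretely, I would use that if $|A^3|\leq k|A|$ then for every $n$ there is a symmetric $O_k(1)$-approximate subgroup $S\seq\langle A\rangle$ with $|S|\geq\delta(k,n)|A|$ and $S^n\seq(AA\inv)^2\cap A^2A^{\nv 2}\cap(A\inv A)^2\cap A^{\nv 2}A^2$, and similarly, if $|AA\inv A|\leq k|A|$ then one gets such an $S\seq\langle AA\inv\rangle$ with $S^n\seq (AA\inv)^2$ (here one also uses that $AA\inv$ has small tripling, Proposition~\ref{prop:Ruz}$(b)$). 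The second ingredient is Theorem~\ref{thm:BGTbdd} (from \cite{HruAG}, \cite{BGT}): since the relevant generated subgroup ($\langle A\rangle$ or $\langle AA\inv\rangle$) has exponent $r$, the approximate subgroup $S$ is commensurable with a genuine subgroup --- there are $j=j(k,r)$ and a subgroup $H$ with $H\seq S^j$ and $S$ covered by $C=C(k,r)$ left cosets of $H$. (Together with Tao's theorem \cite{TaoPSE}, these two facts already give a weaker form of Theorem~\ref{thm:mainbdd}; the role of the Section~\ref{sec:CSS} refinement is precisely to land $H$ in the small product sets above.)

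For part $(2)(a)$ I would fix $n\geq j$ in the Section~\ref{sec:CSS} lemma, so that $H\seq S^j\seq S^n$ and hence $H\seq(AA\inv)^2\cap A^2A^{\nv 2}\cap(A\inv A)^2\cap A^{\nv 2}A^2$, which is $(ii)$. For $(i)$, note $|H|\geq|S|/C\geq\delta'(k,m,r)|A|$, while $H\seq(AA\inv)^2\seq\bar A^4$ gives $\bar A^mH\seq\bar A^{m+4}$, and $|\bar A^{m+4}|\leq O_{k,m}(|A|)$ by the Plunnecke--Ruzsa inequalities (Proposition~\ref{prop:Ruz}); therefore $\bar A^m$ meets at most $|\bar A^{m+4}|/|H|=O_{k,m,r}(1)$ left cosets of $H$. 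Part $(1)(a)$ is proved in the same way, with $\langle A\rangle,\bar A^m,\bar A^{m+4}$ replaced by $\langle AA\inv\rangle,(AA\inv)^m,(AA\inv)^{m+2}$ (the last again bounded by $O_{k,m}(|A|)$ via Proposition~\ref{prop:Ruz}$(b)$).

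For the normal statements $(1)(b)$ and $(2)(b)$, the extra hypothesis ($\langle AA\inv\rangle=(AA\inv)^m$, resp.\ $\langle A\rangle=\bar A^m$) makes the ambient group finite. In case $(1)(b)$, part $(a)(i)$ directly gives $[\langle AA\inv\rangle:H]=O_{k,m,r}(1)$; in case $(2)(b)$, the subgroup $H$ produced in $(2)(a)$ lies in $(AA\inv)^2\seq\langle AA\inv\rangle$, and $[\langle AA\inv\rangle:H]\leq[\langle A\rangle:H]=O_{k,m,r}(1)$. In either case, replacing $H$ by its normal core $\bigcap_{g}gHg\inv$ in $\langle AA\inv\rangle$ leaves it inside $H\seq(AA\inv)^2$ (take $g=1$) while raising the index only to at most $[\langle AA\inv\rangle:H]!$, which is still $O_{k,m,r}(1)$; this gives the desired normal subgroup.

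The substantive content lies entirely in the Section~\ref{sec:CSS} lemma, so the real obstacle is carrying out the Croot--Sisask almost-invariance argument in the measure-theoretic setting and verifying that small tripling (resp.\ small alternation) is strong enough both to force the auxiliary set $S$ to be a genuine approximate subgroup and to place every power $S^n$ inside $(AA\inv)^2$ (and, for small tripling, inside all four product sets) rather than only the larger $A^2A^{\nv 2}$ produced by the classical argument for small doubling. Granting that, the deduction above is essentially bookkeeping; the only mildly delicate points are synchronizing the free power $n$ in the Section~\ref{sec:CSS} lemma with the power $j$ supplied by Theorem~\ref{thm:BGTbdd}, and keeping track of the two generated subgroups $\langle A\rangle$ and $\langle AA\inv\rangle$ so that the coset-covering and normal-core steps land in the right group.
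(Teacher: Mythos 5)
Your proposal is correct and follows essentially the same route as the paper's proof: apply the refined Sanders--Croot--Sisask lemma (Lemma~\ref{lem:MWKP}) to produce a symmetric approximate group $Y$ with $Y^4$ landing in the relevant intersection of product sets, feed $Y$ into the bounded-exponent structure theorem (Theorem~\ref{thm:BGTbdd}) to extract a genuine subgroup $H\seq Y^4$, and pass to the normal core for the normality statements. The only noteworthy difference is in deriving the covering bound for part~$(i)$: the paper simply composes the two covering statements ($V^m$ is covered by $O_{k,m}(1)$ translates of $Y$, and $Y$ by $O_{k,r}(1)$ cosets of $H$), whereas you derive it via a density argument --- bounding $|\bar A^{m+4}|$ by Plunnecke--Ruzsa and $|H|$ from below --- which is equivalent and equally valid; your "synchronization of $n$ with $j$" concern is also moot, since Theorem~\ref{thm:BGTbdd} supplies $H\seq Y^4$ with $j=4$ fixed.
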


\begin{theorem}\label{thm:maingen}
Fix positive integers $k$ and $m$. Let $G$ be a group, and a fix a nonempty finite subset $A\seq G$.
\begin{enumerate}[$(1)$]
\item \textnormal{(small alternation)} Suppose $|AA\inv A|\leq k|A|$ and $\langle AA\inv \rangle =(AA\inv)^m$. Then there are:
\begin{enumerate}[\hspace{5pt}$\ast$]
\item a normal subgroup $H$ of $\langle AA\inv\rangle$, of index $O_{k,m}(1)$, and
\item a $(\delta,n)$-Bohr neighborhood $B$ in $H$, with $\delta\inv,n\leq O_{k,m}(1)$,
\end{enumerate}
such that $B\seq (AA\inv)^2$. Moreover, if $\langle AA\inv\rangle$ is abelian, then we may assume $H=\langle AA\inv\rangle$.
\item \textnormal{(small tripling)} Suppose $|A^3|\leq k|A|$ and $\langle A\rangle=\bar{A}^m$. Then there are:
\begin{enumerate}[\hspace{5pt}$\ast$]
\item a normal subgroup $H$ of $\langle A\rangle$, of index $O_{k,m}(1)$, and
\item a $(\delta,n)$-Bohr neighborhood $B$ in $H$, with $\delta\inv,n\leq O_{k,m}(1)$,
\end{enumerate}
such that $B\seq (AA\inv)^2\cap A^2A^{\nv2}\cap (A\inv A)^2\cap A^{\nv 2} A^2$. Moreover, if $\langle A\rangle$ is abelian then we may assume $H=\langle A\rangle$.
\end{enumerate}
\end{theorem}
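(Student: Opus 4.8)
The plan is to prove a pseudofinite version of the statement and then obtain the finitary version by an ``ultraproduct of counterexamples'', following the outline in the introduction. I will concentrate on part $(2)$ with $A$ symmetric, since the general case of $(2)$ reduces to it after replacing $A$ by $\bar{A}$ and keeping the four product sets $(AA\inv)^2$, $A^2A^{\nv 2}$, $(A\inv A)^2$, $A^{\nv 2}A^2$ in play simultaneously, while part $(1)$ is handled in the same way, starting the Sanders--Croot--Sisask step from the symmetric set $AA\inv$, which has very small tripling by Proposition \ref{prop:Ruz}$(b)$, and tracking that the relevant product sets all land inside $(AA\inv)^2$. So suppose part $(2)$ fails for fixed $k,m$, and choose witnesses: finite groups $G_i$ with symmetric $A_i\seq G_i$ satisfying $|A_i^3|\leq k|A_i|$ and $\langle A_i\rangle=\bar{A_i}^m$, but such that no normal $H\leq\langle A_i\rangle$ of index $\leq i$ admits a $(1/i,i)$-Bohr neighborhood inside the prescribed product set. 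Passing to $G=\prod_i G_i/\cU$, with its pseudofinite counting measure $\mu$, we obtain a pseudofinite symmetric $A\seq G$ with $\langle A\rangle=\bar{A}^m$ and small tripling relative to $\mu$; note $\mu(A)$ is bounded below in terms of $k,m$ since $\mu(\langle A\rangle)=\mu(\bar{A}^m)\leq O_{k,m}(1)\mu(A)$ by Proposition \ref{prop:Ruz}$(a)$.

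Next comes the Sanders--Croot--Sisask analysis of Section \ref{sec:CSS}, carried out in the measure setting (as in \cite{MassWa}). One application yields a symmetric approximate subgroup $S$ with $\mu(S)$ bounded below in terms of $k,m$ and with $S^8$ contained in $(AA\inv)^2\cap A^2A^{\nv 2}\cap(A\inv A)^2\cap A^{\nv 2}A^2$; moreover $S$ again has small tripling with a constant controlled by $k$ alone, so the step applies to $S$ itself. Iterating and intersecting successive outputs, I build a decreasing chain $\bar{A}=S_0\supseteq S_1\supseteq S_2\supseteq\cdots$ of symmetric approximate subgroups, all inside the four product sets, with $\mu(S_j)\geq\epsilon_0>0$ uniformly and with $S_{j+1}$ essentially squaring into $S_j$; the crucial point is that the density bound in the small-tripling version of the step is strong enough that the iteration does not drive $\mu(S_j)$ to zero. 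Then $N:=\bigcap_j S_j$ is a subgroup of $\langle A\rangle$, contained in the four product sets, and of index $O_{k,m}(1)$ in $\langle A\rangle$ (essentially Hrushovski's stabilizer theorem, recovered here from the measure analysis).

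The subgroup $N$ need not be an ultraproduct of subgroups of the $G_i$, so I then move to a sufficiently saturated elementary extension $G_*$ of $G$ in a language naming the group operation, $A$, and the $S_j$; write $A_*$, $S_{j,*}$ for the interpretations. Then $\Gamma:=\bigcap_j S_{j,*}$ is a type-definable bounded-index subgroup of $\langle A_*\rangle=\bar{A_*}^m$ contained in the four product sets of $A_*$, and, replacing $\Gamma$ by the intersection of its boundedly many conjugates, I may take it normal. With the logic topology, $\langle A_*\rangle/\Gamma$ is a compact Hausdorff topological group, and by Pillay's theorem \cite{PiRCP} its connected component is compact, connected, and abelian, hence an inverse limit of tori. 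Pulling this back along $\langle A_*\rangle\to\langle A_*\rangle/\Gamma$, I extract a bounded-index normal subgroup $H_*\leq\langle A_*\rangle$ and a continuous homomorphism $H_*\to\T^n$, trivial on $\Gamma$, whose kernels (as $n$ grows) intersect down to $\Gamma$; since the $S_{j,*}$ form a neighborhood basis of $\Gamma$ among definable sets, a compactness argument places a suitable one of these kernels inside $S_{1,*}$, so the preimage $B_*$ of a small enough ball in $\T^n$ is a Bohr neighborhood in $H_*$ lying inside $S_{1,*}$ and hence inside the four product sets of $A_*$. If $\langle A\rangle$ is abelian, $\langle A_*\rangle/\Gamma$ is already abelian, one keeps $H_*=\langle A_*\rangle$, and $\Gamma$ itself witnesses the ``Moreover'' clause.

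Finally I transfer $B_*$ back to the $G_i$. The homomorphism $H_*\to\T^n$ is not induced by homomorphisms on the factors, but by the approximation method of \cite{CPTNIP} it is closely approximated by a \emph{definable} map that is an approximate homomorphism on a large set; this descends, for $\cU$-almost every $i$, to an approximate homomorphism $H_i\to\T^n$ on a bounded-index normal subgroup $H_i\leq\langle A_i\rangle$, and by the stability theorem for approximate homomorphisms into tori from \cite{AlGlGo} this is uniformly close to a genuine homomorphism $\tau_i\colon H_i\to\T^n$. The associated Bohr neighborhood $B_i=\{x\in H_i:d_n(0,\tau_i(x))<\delta\}$, with $\delta\inv$, $n$, and $[\langle A_i\rangle:H_i]$ all $O_{k,m}(1)$, then lies inside $(A_iA_i\inv)^2\cap A_i^2A_i^{\nv 2}\cap(A_i\inv A_i)^2\cap A_i^{\nv 2}A_i^2$ for $\cU$-almost every $i$, contradicting the choice of the witnesses. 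I expect the main obstacle to be precisely this last step: manufacturing, from the abstract logic-topology homomorphism on $H_*$, honest homomorphisms on genuine finite groups whose Bohr neighborhoods still sit inside the prescribed product sets. This requires combining the approximation technique of \cite{CPTNIP} with the Ulam-type stability of approximate homomorphisms from \cite{AlGlGo}, and carefully checking that every approximation stays inside some $S_{j,*}$ so that nothing is lost when passing back through the ultraproduct; a secondary difficulty, already present in the pseudofinite step, is maintaining uniform control on the tripling constants — and hence on $\epsilon_0$ — through the infinite Sanders--Croot--Sisask iteration.
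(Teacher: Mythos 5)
Your high-level plan matches the paper's: contradiction via an ultraproduct of counterexamples, a Sanders--Croot--Sisask iteration producing a decreasing chain of symmetric internal sets inside the prescribed product sets, passage to a saturated extension $G_*$, Pillay's theorem that the connected component of the definable compactification is abelian, and a transfer of the resulting Bohr data back to the finite factors via the approximation machinery of \cite{CPTNIP} combined with the Ulam stability result of \cite{AlGlGo}. You also correctly isolate the real technical obstacle (manufacturing genuine homomorphisms into $\T^n$ on the finite factors); in the paper this is handled by the definable approximate Bohr chain construction (Lemma \ref{lem:Bapprox}) and Proposition \ref{prop:findBohr}.

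However, there is a genuine error at the pseudofinite step. You assert that the iterates $S_j$ can be kept with $\mu(S_j)\geq\epsilon_0>0$ uniformly in $j$, and that consequently $N=\bigcap_j S_j$ has index $O_{k,m}(1)$ in $\langle A\rangle$. Neither claim is true, nor is it what the argument gives. In Lemma \ref{lem:MWKP} and Corollary \ref{cor:MWKP}, the covering number of $\bar{A}^8$ by translates of $X_n$ is $O_{k,n}(1)$ and genuinely grows with $n$ (because each iterate of the SCS step requires a larger ``power of containment'' $Y_{j+1}^8\seq Y_j^4$, and the covering constant in the step depends on the requested power), so $\mu(X_n)$ may tend to $0$; moreover $\mu$ is only finitely additive, so even a uniform lower bound on $\mu(S_j)$ would not descend to a positive measure for the non-internal intersection $N$. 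What the construction actually yields, and what Corollary \ref{cor:Gamma} records, is only that $\Gamma=\bigcap_n X_n$ (in $G_*$) has \emph{bounded} index in the model-theoretic sense, i.e.\ at most $2^{\aleph_0}$. The finite-index normal subgroup $H$ with $[\Sigma:H]\leq O_{k,m}(1)$ does not come from the SCS iteration at all; it is produced by Proposition \ref{prop:Bohr} (via the compact group $\Sigma_*/\Gamma$ and Pillay's result), and the quantitative bound on $[\Sigma:H]$, $\delta^{-1}$, and $n$ emerges only at the very end from the ultraproduct-of-counterexamples argument. So the sentence ``the crucial point is that the density bound... does not drive $\mu(S_j)$ to zero'' misidentifies the mechanism: if your stronger claim were true, you could conclude immediately without any compact-group machinery, which should have been a warning sign. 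Fortunately your subsequent steps only use the (true) bounded-index fact, so the rest of the proposal survives once this claim is withdrawn.
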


Since the work of Breuillard, Green, and Tao \cite{BGT} on approximate groups makes several appearances in this paper, we take a moment to reconcile their work with the theorems above. First, Theorem \ref{thm:mainbdd} strengthens the main result from \cite{BGT} on approximate subgroups of groups of bounded exponent (see Theorem \ref{thm:BGTbdd}), in that we have replaced approximate subgroups with sets of small alternation or small tripling. As discussed in the introduction, this improvement is obtained by modifying the first step of the work in \cite{BGT}, and then applying their structure theorem. (See also Section \ref{sec:Tao}, where discuss further consequences of our work for the structural results on approximate subgroups from \cite{BGT}.)

To compare Theorem \ref{thm:maingen} to \cite{BGT}, we first quote one of their main results.

\begin{theorem}\textnormal{\cite[Theorem 1.6]{BGT}}\label{thm:BGTmain}
Fix a positive integer $k$. Suppose $G$ is a group and $A\seq G$ is a finite $k$-approximate subgroup of $G$. Then there is a subgroup $H$ of $G$ and a finite normal subgroup $N$ of $H$ with the following properties:
\begin{enumerate}[$(i)$]
\item $A$ is covered by $O_k(1)$ left translates of $H$;
\item $H/N$ is nilpotent and finitely generated of rank and step  $O_k(1)$;
\item $A^4$ contains $N$ and a generating set for $H$.
\end{enumerate}
\end{theorem}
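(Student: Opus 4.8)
The plan is to reproduce the strategy of Breuillard, Green, and Tao, whose heart is to replace the finite $k$-approximate subgroup by a ``Lie model'' and read off the structure from the geometry of a Lie group. Since the conclusion names algebraic objects (the subgroup $H$, the finite normal subgroup $N$) that need not be visible inside $G$ itself, I would first argue by contradiction: were the theorem false, then for each $n$ there would be a group $G_n$ and a $k$-approximate subgroup $A_n\seq G_n$ admitting no such $H,N$ with all the relevant parameters (rank, step, index, covering numbers) bounded by $n$. Passing to a nonprincipal ultraproduct produces a group $G=\prod_{n\to\cU}G_n$ and a pseudofinite $k$-approximate subgroup $\mathbf{A}=\prod_{n\to\cU}A_n$, and the failure of the conclusion transfers to $\mathbf{A}$; one then works in a sufficiently saturated elementary extension, where the limiting subgroups exist.

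Next I would build the model. The Sanders--Croot--Sisask lemma quoted in the introduction, applied to $A^2$ (which has small doubling), gives that $A^2A^{\nv2}$ contains $S^n$ for a symmetric $S$ with $|S|\geq\Omega_{k,n}(|A|)$, for every $n$. Iterating this inside the ultraproduct yields a decreasing chain of ``wide'' internal symmetric sets whose intersection contains a normal subgroup $\Gamma$ of $\langle\mathbf{A}\rangle$ with $\Gamma\seq\mathbf{A}^4$, such that $\langle\mathbf{A}\rangle/\Gamma$ carries a locally compact group topology generated by countably many internal sets, with $\mathbf{A}\Gamma/\Gamma$ a precompact approximate subgroup modelling $\mathbf{A}$. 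The Gleason--Yamabe theorem (the local solution of Hilbert's fifth problem) then provides an open subgroup of $\langle\mathbf{A}\rangle/\Gamma$ and a compact normal subgroup modulo which it is a connected-by-finite Lie group; pulling the compact part back enlarges $\Gamma$ to a normal subgroup $N\seq\mathbf{A}^4$ (after replacing $\mathbf{A}$ by a bounded-index sub-approximate-group) whose quotient sits near the identity of a Lie group.

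The genuinely hard input is then the Lie-theoretic analysis of \cite{BGT}: using escape/no-small-subgroups properties one arranges that the image of $\mathbf{A}$ is a relatively compact symmetric identity neighborhood generating a connected Lie group $L$ of bounded dimension, and one shows --- via the adjoint representation and a Levi-type decomposition --- that a bounded-complexity approximate group cannot spread along semisimple or non-nilpotent solvable directions, forcing $L$ to be nilpotent. Hence $\mathbf{A}$ modulo $N$ is controlled by a coset nilprogression of rank and step $O_k(1)$, which produces, over $N$, a subgroup $\mathbf{H}$ of $\langle\mathbf{A}\rangle$ covering $\mathbf{A}$ in $O_k(1)$ translates, with $\mathbf{H}/N$ nilpotent and finitely generated of rank and step $O_k(1)$, and with $\mathbf{A}^4$ containing $N$ and a generating set for $\mathbf{H}$.

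Finally I would descend: the finitely many generators of $\mathbf{H}$, the $O_k(1)$ translates covering $\mathbf{A}$, the bounded-length nilpotency relations, and the order bounds witnessing that $N$ is a subgroup are all expressible by internal formulas with boundedly many parameters, so \L o\'s's theorem pushes them down to $G_n$ and $A_n$ for $\cU$-almost all $n$, contradicting the choice of counterexamples. As indicated, the main obstacle is everything between the ultraproduct and the descent --- the construction and exploitation of the Lie model --- which subsumes Hrushovski's good-model theorem (hence, through Gleason--Yamabe, the full solution of Hilbert's fifth problem) together with the delicate Lie-group bookkeeping needed to keep every rank, step, index, and covering number bounded purely in terms of $k$.
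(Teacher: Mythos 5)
The statement you were asked to prove is not proved in the paper at all: it is quoted verbatim from Breuillard--Green--Tao \cite[Theorem 1.6]{BGT} purely as background, to set up the comparison with Theorem~\ref{thm:maingen}. There is therefore no in-paper argument to compare your proposal against. What you have written is a recognizable high-level outline of the actual BGT proof --- the ultraproduct compactness reduction, Hrushovski's construction of a locally compact model via a Sanders--Croot--Sisask iteration, the Gleason--Yamabe passage to a Lie model, the Lie-theoretic reduction (escape norms, adjoint representation, trapping) to the nilpotent case, and the descent by \L o\'s --- and the overall shape is accurate, though you are compressing a very long and delicate argument into a few sentences and you acknowledge that the ``hard input'' is all of that compressed material. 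Since the paper's stance toward this theorem is simply to cite it as a black box, the appropriate ``proof'' in the context of this paper would have been a citation, not a reproof.
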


For comparison, in both parts of Theorem \ref{thm:maingen},  the Bohr neighborhood $B$ contains the kernel $N$ of a homomorphism from $H$ to $\T^n$. Thus $H/N$ is a finite abelian group, which can be generated by $n\leq O_{k,m}(1)$ elements. Moreover, since $|B|\geq \Omega_{k,m}(|H|)$, we could replace $H$ by the  the subgroup generated by $B$, and have that $B$ contains a generating set of $H$ (although possibly losing normality of $H$). Altogether, Theorem \ref{thm:maingen} can be seen as an analog of Theorem \ref{thm:BGTmain}, where we obtain stronger conclusions for sets of small alternation or small tripling, under the extra ``bounded generation" assumption coming from the parameter $m$. As with Theorem \ref{thm:mainbdd}, our proof of Theorem \ref{thm:maingen} relies on \cite{BGT}, although this time implicitly via a result of Pillay \cite{PiRCP} used in Proposition \ref{prop:Bohr}. However, this dependence on  \cite{BGT} could be avoided by using  a generalization of Pillay's result due to Nikolov, Schneider, and Thom \cite{NST}.

\begin{remark}
Recall that  the  ``Bogolyubov-Ruzsa Lemma" for \emph{abelian} groups, discussed after Theorem \ref{thm:Bogo}, does not involve a ``bounded generation" parameter $m$ like in Theorem \ref{thm:maingen}. However, similar to Freiman's Theorem,  this result for abelian groups reduces to Theorem \ref{thm:Bogo}$(b)$, using the fact that Bohr neighborhoods in abelian groups can be approximated by coset progressions (see \cite[Lemma 4.22]{TaoVu}), and that finite sets of small doubling in abelian groups have ``good models" as dense sets in finite abelian groups (see \cite[Proposition 2.1]{GrRuz}). Thus, for the sake of completeness, we will explain in Remark \ref{rem:abelianBohr} how to obtain $G=H$ in Theorem \ref{thm:Bogogen}$(b)$ when $G$ is abelian.  
\end{remark}

The rest of this section is devoted to proving the theorems and corollaries in Section \ref{sec:intro} (except for Theorem \ref{thm:NIPregexp}, which is proved in Section \ref{sec:NIP}). We first consider Corollary \ref{cor:sym} since it is immediate from the theorems above.

\begin{proof}[Proof of Corollary \ref{cor:sym}]
Part $(a)$ is immediate from Theorem \ref{thm:mainbdd}. Part $(b)$ is immediate from Theorem \ref{thm:Bogogen}$(a)$.
\end{proof}

\begin{proof}[Proof of Theorem \ref{thm:Bogogen}]
Part $(a)$. Fix a positive integer $r$ and a positive real number $\alpha$. Suppose $G$ is a finite group of exponent $r$ and $A\seq G$ is such that $|A|\geq\alpha|G|$. It is straightforward to show that $\langle A\rangle=\bar{A}^m$ for some $m\leq\lceil 3\alpha+1\rceil$ (see, e.g., \cite[Remark 4]{MassWa}). So we can apply Theorem \ref{thm:mainbdd}$(2)(b)$, with $k=\lceil\alpha\inv\rceil$ and $m=\lceil 3\alpha+1\rceil$, to obtain a  subgroup $K\leq \langle A\rangle$, of index $n=n(\alpha,r)$, such that $K\seq (AA\inv)^2\cap A^2A^{\nv2}\cap (A\inv A)^2\cap A^{\nv 2} A^2$. Note also that $[G:\langle A\rangle]\leq\lceil\alpha\inv\rceil$, and so $[G:K]\leq n\lceil\alpha\inv\rceil$. Now, if $H=\bigcap_{g\in G}gKg\inv$, then $[G:H]\leq [G:K]!\leq O_{\alpha,r}(1)$, $H$ is normal in $G$, and $H\seq (AA\inv)^2\cap A^2A^{\nv2}\cap (A\inv A)^2\cap A^{\nv 2} A^2$.

Part $(b)$. Fix a positive real number $\alpha$. Suppose $G$ is a finite group and $A\seq G$ is such that $|A|\geq\alpha|G|$. In analogy to part $(a)$, Theorem \ref{thm:maingen}$(2)$ provides a subgroup $K\leq\langle A\rangle$ and a Bohr neighborhood $B^n_{\tau,\delta}\seq K$ such that  $[G:K]$, $\delta\inv$, and $n$ are bounded above in terms of $\alpha$, and $B^n_{\tau,\delta}\seq (AA\inv)^2\cap A^2A^{\nv2}\cap (A\inv A)^2\cap A^{\nv 2} A^2$. If $H=\bigcap_{g\in G}gKg\inv$ and  $B=B^n_{\tau,\delta}\cap H$, then $H$ is normal in $G$, $[G:H]\leq O_{\alpha}(1)$, and $B=B^n_{\tau{\upharpoonright}H,\delta}$. So $B$ and $H$ are as desired.
\end{proof}

\begin{remark}
It is worth pointing out that part $(a)$ of Theorem \ref{thm:Bogogen} also follows easily from part $(b)$, due to the fact that a $(\delta,n)$-Bohr neighborhood in a group of exponent $r>\delta\inv$ is a subgroup. We leave details to the reader. In a similar way, parts $(1b)$ and $(2b)$ of Theorem \ref{thm:mainbdd} follows from parts $(1)$ and $(2)$ of Theorem \ref{thm:maingen}, respectively. On the other hand, the proof given below of Theorem \ref{thm:mainbdd} is more direct, and does not require the model theoretic methods employed here, nor the work from \cite{CPTNIP} on approximate Bohr neighborhoods.
\end{remark}

\begin{proof}[Proof of Corollary \ref{cor:pure}]
Fix $\alpha>0$. Suppose $G$ is a purely nonabelian finite group and $A\seq G$ is such that $|A|\geq\alpha|G|$.   By Theorem \ref{thm:Bogogen}$(b)$, there is a normal subgroup $H\leq G$ and a $(\delta,n)$-Bohr neighborhood $B\seq H$, such that $[G:H]\leq O_\alpha(1)$ and $B\seq (AA\inv)^2\cap (A\inv A)^2\cap A^2A^{\nv 2}\cap A^{\nv 2}A^2$. Since $B$ contains $\ker(\tau)$ for some homomorphism $\tau\colon H\to\T^n$, and $G$ is purely nonabelian, we must have $B=H$.
\end{proof}

\begin{remark}\label{rem:simple}
Corollary \ref{cor:pure} implies that for any $\alpha>0$, if $G$ is a nonabelian finite simple group of size at least $\Omega_\alpha(1)$, and $A\seq G$ is such that $|A|\geq\alpha |G|$, then $G=(AA\inv)^2= (A\inv A)^2= A^2A^{\nv 2}= A^{\nv 2}A^2$. Applied to the case of alternating groups $A_n$, this implies that the least upper bound on the index of $H$ in Theorem \ref{thm:Bogogen}$(b)$ must be greater than $\frac{1}{2}\lfloor \alpha\inv\rfloor!$ (at least for $\alpha\leq \frac{1}{5}$). However, it should be noted that stronger results about dense sets in nonabelian finite simple groups are already known. In particular, if $G$ is a nonabelian finite simple group with $\log|G|\geq\Omega(\alpha^{\nv 6})$, and $A,B,C\seq G$ are such that $|A|,|B|,|C|\geq\alpha|G|$, then $G=ABC$.\footnote{By \cite{GowQRG}, the implied constant in $\Omega(\alpha^{\nv 6})$ is no more than $25^{\log(25)}$. Using the classification of finite simple groups, the overall bound can be improved to $|G|>(\lceil\alpha^{\nv 3}\rceil+1)!$ (see \cite{CollJCLG}).} This follows from work of Gowers \cite{GowQRG} on quasirandom groups (as observed by Nikolov and Pyber \cite{NikPyDJ}; see see \cite[Corollary 1]{NikPyDJ}, \cite[Theorem 3.3]{GowQRG}, and \cite[Theorem 4.7]{GowQRG}). Similar results are shown by Hrushovski in \cite{HruAG} (e.g. \cite[Corollary 1.4]{HruAG}). 
\end{remark}

\section{Ultraproducts of groups}\label{sec:G}

In this section we review the ultraproduct construction in the case of groups. The reader only interested in Theorem \ref{thm:maingen} (the bounded exponent case) can skip this section. Throughout this section, let $(G_s)_{s\in\N}$ be a fixed sequence of groups, and fix a nonprincipal ultrafilter $\cU$ on $\N$. Let $G=\prod_{\cU}G_s$ be the ultraproduct of the sequence $(G_s)_{s\in\N}$ with respect to $\cU$. Explicitly, $G=(\prod_s G_{s\in\N})/\!\!\sim$, where $(a_s)\sim(b_s)$ if and only if $\{s:a_s=b_s\}\in\cU$. Recall that $G$ is a group under the (well-defined) operation $[(a_s)]\cdot [(b_s)]=[(a_s\cdot b_s)]$.  A subset $X\seq G$ is \textbf{internal} if there is a sequence $(X_s)_{s\in\N}$, with $X_s\seq G_s$, such that $X=\prod_{\cU}X_s:=(\prod_{s\in \N}X_s)/\!\!\sim$. The collection of internal subsets of $G$ forms a Boolean algebra. 

We also assume that $G$ is infinite, i.e., $\{s\in \N:|G_s|>n\}\in\cU$ for all $n\in\N$. As a result, we obtain the following saturation property of $G$. 

\begin{fact}[Keisler \cite{Keis64}]\label{fact:Keisler}
Suppose $(X_i)_{i=0}^\infty$ is a sequence of internal subsets of $G^{\times n}$ such that $\bigcap_{i=0}^k X_i\neq\emptyset$ for all $k\in\N$. Then $\bigcap_{i=0}^\infty X_i\neq\emptyset$.
\end{fact}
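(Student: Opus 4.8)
The statement is exactly countable saturation for internal subsets, so the plan is to give the standard Łoś-plus-diagonalization argument, using that $\mathcal{U}$ is a nonprincipal ultrafilter on $\mathbb{N}$ and that $G$ is infinite. First I would reduce to the case $n=1$ by noting that $G^{\times n}=\prod_{\mathcal{U}}G_s^{\times n}$, so it suffices to work with one ultraproduct of sets. Write each $X_i=\prod_{\mathcal{U}}X_{i,s}$ with $X_{i,s}\subseteq G_s$. The hypothesis says that for every $k$ the set $\bigcap_{i=0}^k X_i$ is nonempty; by Łoś's theorem (or just directly from the definition of the ultraproduct operation on internal sets, since $\bigcap_{i=0}^k X_i=\prod_{\mathcal{U}}\bigl(\bigcap_{i=0}^k X_{i,s}\bigr)$) this means
\[
I_k:=\Bigl\{s\in\mathbb{N}:\textstyle\bigcap_{i=0}^k X_{i,s}\neq\emptyset\Bigr\}\in\mathcal{U}.
\]
The sets $I_k$ are decreasing in $k$, each lies in $\mathcal{U}$, but their intersection $\bigcap_k I_k$ need not lie in $\mathcal{U}$ (indeed typically it is empty, and this is where nonprincipality matters).

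The main step is the diagonalization: I would choose, for each $s$, the largest $k\le s$ with $s\in I_k$ (set $k(s)=0$ if $s\notin I_0$, which happens only off a set in $\mathcal{U}$ anyway) and then pick $a_s\in\bigcap_{i=0}^{k(s)}X_{i,s}$, using the axiom of choice; if $s\notin I_0$ pick $a_s\in G_s$ arbitrarily. Let $a=[(a_s)]\in G$. I then claim $a\in X_i$ for every $i$. Fix $i$; I must show $\{s:a_s\in X_{i,s}\}\in\mathcal{U}$. For any $s$ with $s\ge i$ and $s\in I_i$, the function $k(s)$ is at least $i$, so by construction $a_s\in\bigcap_{j=0}^{k(s)}X_{j,s}\subseteq X_{i,s}$. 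Thus $\{s:a_s\in X_{i,s}\}\supseteq I_i\cap\{s:s\ge i\}$. Now $I_i\in\mathcal{U}$, and $\{s:s\ge i\}=\mathbb{N}\setminus\{0,\dots,i-1\}$ is cofinite, hence in $\mathcal{U}$ because $\mathcal{U}$ is nonprincipal (a nonprincipal ultrafilter on $\mathbb{N}$ contains every cofinite set). Their intersection is therefore in $\mathcal{U}$, so $\{s:a_s\in X_{i,s}\}\in\mathcal{U}$, i.e. $a\in X_i$. Since $i$ was arbitrary, $a\in\bigcap_{i=0}^\infty X_i$, so this intersection is nonempty.

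The only subtlety — and the one place I would be careful in the writeup — is the bookkeeping in the definition of $k(s)$ and the verification that $\{s:s\ge i\}\in\mathcal{U}$, which is precisely where the standing assumption that $G$ is infinite (equivalently, that $\mathcal{U}$ is nonprincipal, as fixed at the start of the section) gets used; without it the cofinite sets need not be in $\mathcal{U}$ and the diagonal element can fail to land in some $X_i$. Everything else is routine: the reduction to $n=1$, the identification $\bigcap_{i\le k}X_i=\prod_{\mathcal{U}}\bigl(\bigcap_{i\le k}X_{i,s}\bigr)$ for internal sets, and the appeal to choice. I would present the argument in about half a page, citing Keisler \cite{Keis64} for the general form.
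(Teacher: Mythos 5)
Your diagonalization argument is correct and is the standard proof of $\aleph_1$-saturation for ultraproducts over a nonprincipal ultrafilter on $\N$; the paper itself gives no proof, only a citation to Keisler, so there is nothing to compare against beyond noting that this is the expected argument. One small inaccuracy in your framing: the standing assumption ``$G$ is infinite'' is \emph{not} equivalent to ``$\cU$ is nonprincipal'' (a principal ultraproduct $\prod_\cU G_s \cong G_{s_0}$ can be infinite if $G_{s_0}$ is), and indeed what your argument actually uses is only nonprincipality of $\cU$ (so that cofinite sets lie in $\cU$); the infiniteness of $G$ is a separate standing hypothesis used elsewhere in the paper, and for finite $G$ the Fact is vacuously true by the descending chain condition. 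Otherwise the reduction to $n=1$, the identification $\bigcap_{i\le k}X_i=\prod_{\cU}\bigl(\bigcap_{i\le k}X_{i,s}\bigr)$, the choice of $k(s)$ and $a_s$, and the verification that $I_i\cap\{s:s\ge i\}\in\cU$ are all in order.
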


Finally, we fix a distinguished internal set $A\seq G$ (so $A=\prod_{\cU}A_s$ for some $A_s\seq G_s$), and we assume that $A$ is nonempty and \emph{pseudofinite} (i.e., $A_s$ is nonempty and finite for all $s\in\N$). With $A$ fixed, we define the \textbf{$|A|$-normalized pseudofinite counting measure} $\mu$ on the Boolean algebra of internal subsets of $G$. Specifically, given an internal set $X=\prod_{\cU}X_s$, define
\[
\mu(X)=\lim_{\cU}\frac{|X_s|}{|A_s|}\in\R_{\geq 0}\cup\{\infty\},
\]
(where $\lim_{\cU} x_s=y$ if and only if, for all $\epsilon>0$, $\{s:|x_s-y|<\epsilon\}\in\cU$). Note that $\mu$ is a left and right invariant finitely additive measure on the internal subsets of $G$.

Properties of finite subsets of groups such as small alternation or small tripling can be formulated using $\mu$. For example, $\mu(A^3)<\infty$ if and only if for some fixed $k>0$, $\{s:|A_s^3|\leq k|A_s|\}\in\cU$. The fact that $\mu$ is controlled by discrete counting measures allows us to transfer the following Plunnecke-Ruzsa inequalities to $G$. 

\begin{proposition}\label{prop:Ruz}
Fix an internal set $X\seq G$, with $0<\mu(X)<\infty$.
\begin{enumerate}[$(a)$]
\item Suppose $\mu(X^3)\leq k\mu(X)$ for some $k>0$. Then, for any $n\geq 1$ and $\epsilon_1,\ldots,\epsilon_n\in\{\nv1,1\}$, $\mu(X^{\epsilon_1}\cdot\ldots\cdot X^{\epsilon_n})\leq k^{O_n(1)}\mu(X)$.
\item Suppose $\mu(XX\inv X)\leq k\mu(X)$ for some $k>0$. Then $\mu((XX\inv)^n)\leq k^{O_n(1)}\mu(X)$ for any $n\geq 1$.
\end{enumerate}
\end{proposition}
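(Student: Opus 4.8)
The plan is to reduce both parts to the corresponding \emph{finitary} Pl\"unnecke--Ruzsa inequalities and then pass through the ultrafilter limit; the mathematical content lies entirely in those finitary estimates, so what follows is essentially bookkeeping. Write $X=\prod_\cU X_s$, where, after deleting the set of indices $s$ with $X_s=\emptyset$ (this set is not in $\cU$, since $\mu(X)>0$), each $X_s$ is a nonempty finite subset of $G_s$. The one point that uses the hypothesis $0<\mu(X)<\infty$ is the following: for any internal $Y=\prod_\cU Y_s$ one has $\lim_\cU|Y_s|/|X_s|=\mu(Y)/\mu(X)$ in $\R_{\geq 0}\cup\{\infty\}$, because the sequence $|X_s|/|A_s|$ converges along $\cU$ to the \emph{nonzero real} $\mu(X)$, so the ratio of $\cU$-limits is the $\cU$-limit of the ratios. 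We also use that the ultraproduct construction commutes with the operations in play: $X\inv=\prod_\cU X_s\inv$ and $(\prod_\cU Y_s)(\prod_\cU Z_s)=\prod_\cU(Y_sZ_s)$, so that $X^{\epsilon_1}\cdots X^{\epsilon_n}=\prod_\cU(X_s^{\epsilon_1}\cdots X_s^{\epsilon_n})$ and $(XX\inv)^n=\prod_\cU(X_sX_s\inv)^n$.

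For part $(a)$, the hypothesis $\mu(X^3)\leq k\mu(X)$ together with the above gives $\lim_\cU|X_s^3|/|X_s|\leq k$, so for each $\eta>0$ the set $\{s:|X_s^3|\leq(k+\eta)|X_s|\}$ belongs to $\cU$. Now invoke the nonabelian Pl\"unnecke--Ruzsa inequalities observed by Helfgott \cite{HelfGG} (see also \cite{TaoVu,TaoPSE}): there is $C=C(n)$ such that $|B^3|\leq K|B|$ implies $|B^{\epsilon_1}\cdots B^{\epsilon_n}|\leq K^C|B|$ for every finite subset $B$ of any group and all $\epsilon_1,\ldots,\epsilon_n\in\{\nv1,1\}$. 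Applying this on the $\cU$-large set above, dividing by $|A_s|$, and taking $\cU$-limits (which respect $\leq$), we get $\mu(X^{\epsilon_1}\cdots X^{\epsilon_n})\leq(k+\eta)^C\mu(X)$; letting $\eta\to0$ gives the conclusion, and in particular shows the measure is finite.

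Part $(b)$ is identical in structure, using instead the finitary fact that $|BB\inv B|\leq K|B|$ forces $BB\inv$ to be a $K^{O(1)}$-approximate group (\cite{TaoPSE}; see also Section \ref{sec:Tao}), whence $|(BB\inv)^n|\leq K^{O_n(1)}|BB\inv|\leq K^{O_n(1)}|B|$; this can alternatively be deduced from standard Ruzsa-type inequalities (e.g.\ via $(BB\inv)^3=P P\inv$ with $P=BB\inv B$ and $|P|\leq K|B|$). Translating through $\cU$ exactly as in part $(a)$ yields $\mu((XX\inv)^n)\leq k^{O_n(1)}\mu(X)$.

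There is no substantial obstacle: the work has been done in the literature on product-set estimates, and the only place demanding a little attention is the elementary step of converting a ratio of $\cU$-limits into the $\cU$-limit of a ratio, which is precisely why the hypothesis requires $0<\mu(X)<\infty$.
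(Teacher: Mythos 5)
Your overall strategy matches the paper's: reduce to the finitary case (the paper dispenses with the ultraproduct bookkeeping in one sentence; your more explicit version is fine), cite Helfgott for part $(a)$, and for part $(b)$ establish small tripling of $XX\inv$ so that part $(a)$ can be applied to the symmetric set $XX\inv$.

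The concern is with your primary justification for part $(b)$. The statement ``$|BB\inv B|\leq K|B|$ forces $BB\inv$ to be a $K^{O(1)}$-approximate group'' is not itself a theorem of \cite{TaoPSE}: Tao's Theorem 3.9 is a small-\emph{tripling} hypothesis, and deducing the alternation version is exactly the content of Proposition~\ref{prop:Ruz}$(b)$. Worse, the paper's own Section~\ref{sec:Tao} (specifically Remark~\ref{rem:HruTao}) derives the very fact you cite \emph{from} Proposition~\ref{prop:Ruz}$(b)$, so pointing there is circular. Your parenthetical alternative --- Ruzsa triangle inequality, with the observation that $(BB\inv)^3=PP\inv$ for $P=BB\inv B$ --- is in fact what the paper does, and it is the non-circular route. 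But as written it is underdeveloped: the identity $(BB\inv)^3=PP\inv$ together with $|P|\leq K|B|$ does not by itself bound $|PP\inv|$; one must first bound $|(BB\inv)^2|$ (e.g.\ via $d(BB\inv,B\inv)\leq\log K$ and the triangle inequality), and then a second application of the triangle inequality controls $|(BB\inv)^3|$. The paper carries out precisely this two-step Ruzsa-distance computation, arriving at $|(XX\inv)^3|\leq k^5|X|$, after which $(a)$ applied to the symmetric set $XX\inv$ finishes. You should promote your alternative to the main argument and fill in the intermediate triangle-inequality step.
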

\begin{proof}
It suffices to fix $s\in\N$ and prove the claims for $G_s$ with $\mu$ replaced by the usual counting measure. In this setting, part $(a)$ is precisely the ``discrete case" of \cite[Lemma 3.4]{TaoPSE} (first observed by Helfgott \cite[Lemma 2.2]{HelfGG}). So we only need to show part $(b)$. The proof is similar to that of \cite[Lemma 3.4]{TaoPSE} and relies on the triangle inequality for Ruzsa distance. In particular, given nonempty finite $X,Y\seq G_s$, the Ruzsa distance is between $X$ and $Y$ is defined as
\[
d(X,Y)=\log\left(\frac{|XY\inv|}{|X|^{1/2}|Y|^{1/2}}\right).
\]
Then, for nonempty finite $X,Y,Z\seq G_s$, we have $d(X,Z)\leq d(X,Y)+d(Y,Z)$ (this is due to Ruzsa \cite{Ruz} in the commutative setting; see also \cite[Lemma 3.2]{TaoPSE}). 

Now fix a nonempty finite set $X\seq G_s$, and assume $|XX\inv X|\leq k|X|$. By part $(a)$ (in $G_s$) it is enough to show $|(XX\inv)^3|\leq k^{O(1)}|X|$. For this, first note that $d(XX\inv,X\inv)\leq\log k$. So $d(XX\inv,XX\inv)\leq \log k^2$ by the triangle inequality, and thus $|(XX\inv)^2|\leq k^2|XX\inv|$.
Then $d(X X\inv X,X)\leq \log k^2$. By the triangle inequality, $d(X X\inv X,X X\inv X)\leq \log k^4$, and thus $|(XX\inv)^3|\leq k^4|XX\inv X|\leq k^5|X|$.
\end{proof}

\section{Sanders-Croot-Sisask Analysis}\label{sec:CSS}

In this section, we prove Lemma \ref{lem:MWKP}, which is the main technical lemma of the paper. It is essentially a  modification of a result of Croot-Sisask \cite{CrSi} and Sanders \cite{SanBS}, which was later adapted by Breuillard, Green, and Tao \cite[Section 5]{BGT} for their results on the structure of approximate groups. In the model-theoretic setting, these same techniques were used by Massicot and Wagner \cite{MassWa} in their work on ``definably amenable" approximate groups, and also by Krupi{\'n}ski and Pillay \cite{KrPiAG}. Part $(a)$ of Lemma \ref{lem:MWKP}, which deals with sets of small alternation, is similar to some of Hrushovski's work with near-subgroups, especially \cite[Corollary 3.11]{HruAG}. Our proof follows Sanders \cite{SanBS} (as do  \cite{BGT}, \cite{KrPiAG}, and \cite{MassWa}),  and makes the modifications necessary to work with sets of small tripling or small alternation, and also to account for working with internal sets in the case of ultraproducts.

In this section, we work with a fixed group $G$, a fixed subset $A\seq G$, and a finitely additive measure $\mu$, defined on a certain Boolean algebra of subsets of $G$ and taking values in $\R_{\geq 0}\cup\{\infty\}$. While one could formulate a precise axiomatic framework to allow for a more general setting, it will suffice for our purposes to further assume that  one of the following two cases holds.

\hspace{5pt}

\noindent\textbf{Discrete case:} $A$ is a nonempty finite subset of $G$ and $\mu$ is the $|A|$-normalized counting measure: $\mu(X)=|X|/|A|$, defined for any $X\seq G$. 

\hspace{5pt}

\noindent\textbf{Pseudofinite case:} $G$, $A$, and $\mu$ are as in Section \ref{sec:G}.

\hspace{5pt}

The reader only interested in Theorem \ref{thm:maingen} can assume the discrete case and ignore the pseudofinite case. We call a set $X\seq G$ \textbf{measurable} if $\mu(X)$ is defined. 
Note that Proposition \ref{prop:Ruz} makes sense in the discrete case if we remove the word ``internal", and the statement remains true (this is what was shown in the proof). 

\begin{lemma}\label{lem:MWKP}
Fix $m,n\geq 1$ and a measurable set $X\seq G$, with $0<\mu(X)<\infty$.
\begin{enumerate}[$(a)$]
\item Suppose $\mu(XX\inv X)\leq k\mu(X)$ for some $k\geq 1$. Then there is a measurable symmetric set $Y\seq G$ such that $Y^n\seq (X X\inv)^2$ and $(XX\inv)^m$ is covered by $O_{k,m,n}(1)$ $(XX\inv)^m$-translates of $Y$.
\item Suppose $\mu(X^3)\leq k\mu(X)$ for some $k\geq 1$. Then there is a measurable symmetric set $Y\seq G$ such that $Y^n\seq (XX\inv)^2\cap X^2X^{\nv 2}\cap (X\inv X)^2\cap X^{\nv 2}X^2$ and $\bar{X}^{m}$ is covered by $O_{k,m,n}(1)$ $\bar{X}^{m}$-translates of $Y$.
\end{enumerate}
\end{lemma}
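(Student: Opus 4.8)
The strategy is the Sanders--Croot--Sisask argument, organized around a single pigeonhole step on translates followed by an iterated convolution/probabilistic refinement. I will first reduce both parts to a single engine. In part $(b)$, small tripling for $X$ gives small alternation for $X$ as well (via the Plünnecke--Ruzsa inequalities of Proposition~\ref{prop:Ruz}$(a)$, which control $\mu(X^{\epsilon_1}\cdots X^{\epsilon_\ell})$ by $k^{O_\ell(1)}\mu(X)$), so up to adjusting constants all the relevant product sets $XX\inv$, $X\inv X$, $X^2$, $X^{\nv 2}$ have measure $O_k(\mu(X))$; this is exactly what lets the four intersectands appear symmetrically in the conclusion. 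So it suffices to run the following for a generic ``small" set $W$ (e.g. $W=X$ or $W=XX\inv$) with $\mu(W^{\pm}W^{\pm})=O_k(\mu(W))$ and $\mu(W)=\Theta_k(\mu(X))$, producing a symmetric measurable $Y$ with $Y^n$ inside the desired product set and $(\text{bounded power of }W)$ covered by boundedly many translates of $Y$.

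\textbf{The core construction.} Fix a large integer $t=t(k,m,n)$ to be chosen. Using $\mu(X^t)\leq k^{O_t(1)}\mu(X)$ (Proposition~\ref{prop:Ruz}), by pigeonhole there is an $\ell<t$ with $\mu(X^{\ell+1})\leq (1+\tfrac{1}{2t})\mu(X^{\ell})$ — i.e. some power of $X$ is ``almost stable" under multiplication by $X$; write $P=\bar X^{\ell}$, so $\mu(PX)$ and $\mu(PX\inv)$ are within a factor $(1+\tfrac{1}{t})$ of $\mu(P)$, and $\mu(P)=\Theta_{k,t}(\mu(X))$. The key quantity is the normalized indicator convolution $f=\mathbf 1_P * \mathbf 1_{P\inv}/\mu(P)$ (interpreted measure-theoretically; in the pseudofinite case all the sets involved are internal, so $\mu$ applies and Keisler saturation, Fact~\ref{fact:Keisler}, is available later). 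The almost-stability forces $f$ to be close to $\mathbf 1_P\cdot\mu(P)$ in an appropriate $L^1$ or $L^2$ sense after shifting by any single element of $X$ (this is the Croot--Sisask ``almost-periodicity" input, proved by a second-moment/Chebyshev argument on a random sample of $O_{k}(\epsilon^{-2})$ points of $P$). From this I extract a symmetric set $Y$ of the form $Y=\{g : \|(\text{shift of }f\text{ by }g) - f\|\text{ small}\}$ — a ``Bohr-like" set of $f$ — which is measurable with $\mu(Y)=\Omega_{k,t}(\mu(X))$, and whose powers $Y^n$ still act nearly trivially on $f$; since $f$ is supported on $PP\inv$, having $Y^n$ fix $f$ essentially forces $Y^n$ into $PP\inv\subseteq (XX\inv)^{2}$ after another short chain of inclusions (here is where one carefully checks the four variants $X^2X^{\nv2}$ etc. in part $(b)$ by running the argument with $f=\mathbf 1_{X^2}*\mathbf 1_{X^{\nv 2}}$ and its siblings, or by a direct manipulation using $P\subseteq \bar X^\ell$). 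The covering statement is the standard Ruzsa covering lemma: since $\mu(Y)=\Omega_{k,t,n}(\mu(X))$ and $\mu((XX\inv)^m Y)\leq \mu(\bar X^{O(m+\ell)})=O_{k,m,t}(\mu(X))$, the set $(XX\inv)^m$ (resp.\ $\bar X^m$) is covered by $O_{k,m,n}(1)$ translates of $YY\inv=Y^2$, and then one replaces $Y$ by $Y^2$, absorbing this into the choice of $t$ relative to $n$ (run the convolution argument to get $Y^{2n}\subseteq(XX\inv)^2$ instead).

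\textbf{Uniformity across the two cases.} The only difference between the discrete and pseudofinite cases is bookkeeping: in the discrete case all subsets are measurable; in the pseudofinite case one must check at each step that the sets produced ($P$, $Y$, the various product sets, the ``almost-period" set) are internal, which holds because each is defined from $A$ and the group operations by a first-order formula, and that the random-sampling argument can be replaced by its $\mu$-version (an averaging over an internal index set, or equivalently transferring the finitary inequality through $\lim_{\cU}$). Either way the constants depend only on $k,m,n$.

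\textbf{Main obstacle.} The delicate point is getting $Y^n\subseteq (XX\inv)^2$ with the exponent $2$ on the right rather than a larger power — i.e. extracting a genuinely ``thin" period set $Y$ whose $n$-th power is still trapped in a degree-$2$ product set, which is what forces the quantitative interplay between $t$, the sampling accuracy $\epsilon$, and $n$ to be chosen in the right order ($\epsilon$ small depending on $n$; then $t$ large depending on $k,\epsilon$). Equally fiddly is verifying all four of the asymmetric product sets $X^2X^{\nv 2},\ (X\inv X)^2,\ X^{\nv 2}X^2$ simultaneously in part $(b)$ — I expect this to come out by symmetry of the argument under $X\mapsto X\inv$ together with the observation that small tripling is preserved, so the same $Y$ (or its intersection over the four runs, still of measure $\Omega_{k}(\mu(X))$ and still internal) works for all of them at once.
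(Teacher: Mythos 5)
Your proposal takes a genuinely different route from the paper: you invoke the Croot--Sisask probabilistic almost-periodicity argument (random sampling, $L^p$ concentration, almost-period sets of a convolution), whereas the paper follows Sanders \cite{SanBS} via Massicot--Wagner \cite{MassWa}, working with the threshold function $f(t)=\inf\{\mu(BZ)/\mu(X):B\seq X,\ \mu(B)\geq t\mu(X)\}$ and a pigeonhole on $f$ rather than on the sets themselves. Both routes are, in principle, capable of proving Bogolyubov--Ruzsa-type statements, so the high-level choice is legitimate. However, there are two concrete gaps in your execution.

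\textbf{The pigeonhole step is false as stated.} You claim that $\mu(X^t)\leq k^{O_t(1)}\mu(X)$ yields, by pigeonhole, some $\ell<t$ with $\mu(X^{\ell+1})\leq(1+\tfrac{1}{2t})\mu(X^\ell)$. But the Pl\"unnecke--Ruzsa bound has the form $\mu(X^t)\leq k^{Ct}\mu(X)$, so the geometric-mean pigeonhole only gives $\mu(X^{\ell+1})\leq k^{Ct/(t-1)}\mu(X^\ell)\approx k^C\mu(X^\ell)$. This is a ratio bounded by a constant $k^{O(1)}$, not $1+o(1)$. Forcing $(1+\tfrac{1}{2t})$-stability would require $k^{C}\leq 1+\tfrac{1}{2t}$, which fails for every $k\geq 2$. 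The whole ``almost stable power $P=\bar X^\ell$'' device therefore does not exist, and the claim that $\mu(PX)$ is within a factor $(1+\tfrac1t)$ of $\mu(P)$ is unjustified. Neither Sanders' nor Croot--Sisask's argument needs such a stable power: Croot--Sisask works with a constant doubling ratio at the cost of a smaller period set, and Sanders' pigeonhole is applied to the function $f$ above (a functional, not a set-size, pigeonhole).

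\textbf{The containment $PP\inv\seq(XX\inv)^2$ is false for $\ell>1$.} With $P=\bar X^\ell$ one has $PP\inv=\bar X^{2\ell}$, which is strictly larger than $(XX\inv)^2$ already for $\ell=2$, and the standard almost-period argument actually lands $Y^n$ inside $\operatorname{supp}(f)\operatorname{supp}(f)\inv = PP\inv PP\inv$, which is worse still. The exponent $2$ in the conclusion is exactly what makes the lemma useful, and it comes from running the almost-periodicity argument for $\mathbf 1_B*\mathbf 1_Z$ with $B\seq X$ (a \emph{subset} of $X$, chosen by the threshold pigeonhole) and $Z\in\{X,X\inv\}$: then the near-period set lands in $BZZ\inv B\inv\seq (XX\inv)^2$ (or $X^2X^{\nv2}$, etc.). Iterating to a higher power $P=\bar X^\ell$ destroys this. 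In the paper this is why the sets $V$, $Z$, $W$ are defined as they are, and why the set $\cB_t$ ranges over subsets of $X$ itself.

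Your treatment of the four intersectands in part $(b)$ (using the symmetry $X\mapsto X\inv$ plus one case distinction $Z\in\{X,X\inv\}$) and the bookkeeping about internal sets in the pseudofinite case are on the right track and match the paper in spirit. The covering claim via Ruzsa covering is also correct once $\mu(Y)=\Omega_{k,m,n}(\mu(X))$ is established. But the two gaps above are load-bearing, and as written the argument does not prove the lemma.
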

\begin{proof}
We will prove the two statements in parallel, as the arguments are similar.
 Let $X\seq G$ be a fixed internal set, with $0<\mu(X)<\infty$. Fix an integer $k\geq 1$. By ``case $(a)$", we mean the assumption that $\mu(XX\inv X)\leq k\mu(X)$; and by ``case $(b)$'', we mean the assumption that $\mu(X^3)\leq k\mu(X)$. 
 
 Before starting the argument, we first simplify case $(b)$. Define
 \[
 \Pi_1(X)=(XX\inv)^2,~\Pi_2(X)=X^2X^{\nv 2},~\Pi_3(X)=(X\inv X)^2,\text{ and }\Pi_4(X)=X^{\nv 2}X^2.
 \]
We claim that it suffices to find, for each individual $c\in\{1,2,3,4\}$, a set $Y_c$ as described but only with $Y_c^n\seq \Pi_c(X)$. To see this, we apply some elementary tools from \cite{BGT} (which transfer to the pseudofinite setting).  So fix $m,n\geq 1$ and suppose that, for $c\in\{1,2,3,4\}$, we have measurable symmetric $Y_c\seq G$ such that $Y_c^{4n}\seq\Pi_c(X)$ and $\bar{X}^m$ is covered by $O_{k,m,n}(1)$ $\bar{X}^m$-translates of $Y_c$. Let $S=\bar{X}^{\max\{m,2\}}$. Then $S$ is an $O_{k,m}(1)$-approximate subgroup by Proposition \ref{prop:Ruz}$(a)$ and \cite[Corollary 5.2]{BGT}. Setting $Y_*=\bigcap_{c=1}^4 Y^2_c$, we have $\mu(\bar{X}^m)\leq\mu(S)\leq O_{k,m,n}(\mu(Y_*))$ by \cite[Corollary 5.9]{BGT}. By \cite[Lemma 5.1]{BGT}, there is a finite set $F\seq \bar{X}^m$ such that $|F|\leq\mu(\bar{X}^mY_*)/\mu(Y_*)$ and $\bar{X}^m\seq FY_*^2$. Since $Y_*\seq \bar{X}^2$, we have $\mu(\bar{X}^mY_*)\leq O_{k,m}(\mu(\bar{X}^m))$ by Proposition \ref{prop:Ruz}$(a)$, which implies $|F|\leq O_{k,m,n}(1)$. So if we set $Y=Y_*^2$, then $Y$ is a measurable symmetric set, $\bar{X}^m$ is covered by $O_{k,m,n}(1)$ $\bar{X}^m$-translates of $Y$, and $Y^n= Y_*^{2n}\seq \bigcap_{c=1}^4 Y^{4n}_c\seq \bigcap_{c=1}^4\Pi_c(X)$, as desired. 

So now in case $(b)$, we fix $c\in\{1,2,3,4\}$ and find $Y=Y_c$  with $Y^n\seq\Pi_c(X)$.  Since $\Pi_1(X\inv)=\Pi_3(X)$ and $\Pi_2(X\inv)=\Pi_4(X)$, it suffices to assume $c\in\{1,2\}$. Set 
\[
V=\begin{cases}
(XX\inv)^m & \text{in case $(a)$}\\
\bar{X}^{m} & \text{in case $(b)$}
\end{cases}
\makebox[.4in]{and}
Z=\begin{cases}
X\inv & \text{in case $(a)$, or case $(b)$ with $c=1$}\\
X & \text{in case $(b)$ with $c=2$.}
\end{cases}
\]
Note that $V$ is symmetric. We now closely follow Sanders \cite{SanBS}. For $t\in (0,1]$, define
\[
\cB_t=\{B\seq X:\text{$B$ is internal and $\mu(B)\geq t\mu(X)$}\}.
\]
Then $X\in \cB_t$ for all $t\in(0,1]$, and so we may define a function $f\colon (0,1]\to[1,\infty)$ such that $f(t) =\inf\{\mu(BZ)/\mu(X):B\in\cB_t\}$. 

By Proposition \ref{prop:Ruz}, we may fix $\ell\geq 1$ such that $\ell\leq k^{O_m(1)}$ and $\mu(VX)\leq \ell\mu(X)$ (in case $(a)$, use $\mu((XX\inv)^mX)\leq \mu((XX\inv)^{m+1})$). By \cite[Lemma 11]{MassWa} (taken from  \cite{SanBS}), we may choose $t\in (0,1]$ such that $t\inv\leq O_{k,m,n}(1)$ and $f(t^2/2\ell)\geq ((2n-1)/2n)f(t)$. Choose $B\in\cB_t$ such that $\mu(BZ)/\mu(X)\leq ((2n+1)/2n)f(t)$.

Define $Y_*=\{g\in V^2:\mu(gB\cap B)\geq (t^2/2\ell)\mu(X)\}$,  and note that $1\in Y_*$, since $B\in\cB_t$ and $t>t^2/2\ell$. 

\vspace{5pt}

\noindent\emph{Claim 1}: $V$ is covered by $O_{k,m,n}(1)$ $V$-translates of $Y_*$.

\noindent\emph{Proof}: Let $w=\lfloor 2\ell/t\rfloor$ and note that $w\leq O_{k,m,n}(1)$. Suppose, for a contradiction, that $V$ is not covered by $w$ $V$-translates of $Y_*$. Then we may construct a sequence $(g_i)_{i=0}^w$ from $V$ such that, for all $i\leq w$, $g_i\not\in\bigcup_{j<i}g_j Y_*$. For any $0\leq i<j\leq w$, $g_i\inv g_j\in V^2\backslash Y_*$, and so we have $\mu(g_iB\cap g_jB)<(t^2/2\ell)\mu(X)$. We also have $g_iB\seq VX$ for any $0\leq i\leq w$. Now we obtain a contradiction:
\begin{align*}
\ell\mu(X) &\geq \mu(VX) \geq \mu\left(\bigcup_{i\leq w}g_i B\right) \geq (w+1)\mu(B)-\sum_{i<j\leq w}\mu(g_iB\cap g_jB)\\
 &> (w+1)t\mu(X)-\frac{w(w+1)t^2\mu(X)}{4\ell}= (w+1)\left(1-\frac{wt}{4\ell}\right)t\mu(X)\\
  &> \ell\mu(X),
\end{align*}
where the last inequality uses $w\leq 2\ell/t< w+1$.\claim{1}

Set
\[
W=\begin{cases}
(XX\inv)^2 & \text{in case $(a)$, or case $(b)$ with $c=1$}\\
X^2X^{\nv 2} & \text{in case $(b)$ with $c=2$.}
\end{cases}
\]

\noindent\emph{Claim 2}: $Y^n_*\seq W$. 

\noindent\emph{Proof}: We first show that $\mu(gBZ\smd BZ)<2\mu(BZ)/n$ for any $g\in Y_*$. To see this, note that if $g\in Y_*$ then $gB\cap B\in\cB_{t^2/2\ell}$, and so
\[
\mu(gBZ\cap BZ)\geq\mu((gB\cap B)Z)\geq f(t^2/2\ell)\mu(X)\geq \frac{2n-1}{2n}f(t)\mu(X)\geq\frac{2n-1}{2n+1}\mu(BZ).
\]
So, for any $g\in Y_*$, 
\[
\mu(gBZ\smd BZ)=2\mu(BZ)-2\mu(gBZ\cap BZ)\leq \frac{4}{2n+1}\mu(BZ)<\frac{2}{n}\mu(BZ).
\]
Now fix $g_1,\ldots,g_n\in Y_*$ and, for $0\leq i\leq n$, let $h_i=\prod_{j\leq i} g_j$ (so $h_0=1$). Then
\begin{align*}
\mu(h_nBZ\smd BZ) \leq \mu\left(\bigcup_{i=0}^{n-1} h_i(g_{i+1}BZ\smd BZ)\right)\leq \sum_{i=0}^{n-1}\mu(g_{i+1}BZ\smd BZ)<2\mu(BZ).
\end{align*}
It follows that $h_nBZ\cap BZ\neq\emptyset$, which implies $h_n\in BZZ\inv B\inv\seq W$.\claim{2}

\hspace{5pt}

Now, in the discrete case, we may take $Y=Y_*$ and the proof is finished. In the pseudofinite case, we must address the fact that $Y_*$ may not be internal. So suppose  we are in the pseudofinite case. 

\hspace{5pt}

\noindent\emph{Claim 3}: $Y_*=\bigcap_{i=0}^\infty Y_i$ where, for each $i\in\N$, $Y_i$ is symmetric and internal, and contains $Y_{i+1}$.

\noindent\emph{Proof}: Let $\beta=(t^2/2\ell)\mu(X)$, and so $Y_*=\{g\in V^2:\mu(gB\cap B)\geq \beta\}$. By assumption, $X$ and $B$ are internal and so we may choose sets $X_s,B_s\seq G_s$, for $s\in\N$, such that $X=\prod_\cU X_s$ and $B=\prod_\cU B_s$. Given $s\in\N$, set
\[
V_s=\begin{cases}
(X_sX_s\inv)^n & \text{in case $(a)$,}\\
\bar{X}_s^{n} & \text{in case $(b)$.}
\end{cases}
\]
Note that each $V_s$ is symmetric, and $V=\prod_{\cU}V_s$. Given $i\in\N$ and $s\in\N$, define 
\[
Y_{i,s}=\left\{g\in V_s^2:\frac{|gB_s\cap B_s|}{|G_s|}>\textstyle\beta-\frac{1}{i+1}\right\}.
\]
Note that $Y_{i,s}\inv = Y_{i,s}$ for all $i,s\in\N$.  Given $i\in\N$, let $Y_i=\prod_\cU Y_{i,s}$. Then, for any $i\in\N$, $Y\inv_i=Y_i$ and $Y_{i+1}\seq Y_i$. Moreover, $Y_*=\bigcap_{i=0}^\infty Y_i$. \claim{3}

\hspace{5pt}

Fix $(Y_i)_{i=0}^\infty$ as in Claim 3. To finish the proof of the lemma in the pseudofinite case, it suffices to show that $Y^n_i\seq W$ for some $i\in\N$.  Toward this end, we first show $Y_*^n=\bigcap_{i=0}^\infty Y^n_i$. We clearly have $Y_*^n\seq \bigcap_{i=0}^\infty Y^n_i$. For the other direction, fix $a\in \bigcap_{i=0}^\infty Y_i^n$. For $i\in\N$, define
\[
D_i=\{(g_1,\ldots,g_n)\in G^{\times n}:g_j\in Y_i\text{ for $1\leq j\leq i$, and $a=g_1\cdot\ldots \cdot g_n$}\}.
\]
Then, for all $i\in\N$, $D_i$ is nonempty, internal, and $D_{i+1}\seq D_i$. By Fact \ref{fact:Keisler}, there is $(g_1,\ldots,g_n)\in \bigcap_{i=0}^\infty D_i$, and so $a=g_1\cdot \ldots \cdot g_n\in Y^n_*$. 

Finally, since $\bigcap_{i=0}^{\infty} Y^n_i=Y_*^n\seq W$, it follows from Fact \ref{fact:Keisler} that $Y^n_i\seq W$ for some $i\in\N$. 
\end{proof}

At this point, we have all necessary tools to proceed with the proof of Theorem \ref{thm:mainbdd} (see Section \ref{sec:proofbdd}). For Theorem \ref{thm:maingen}, we will 
need following corollary of the previous lemma, which is only meaningful in the pseudofinite case.

\begin{corollary}\label{cor:MWKP}$~$
\begin{enumerate}[$(a)$]
\item Suppose $\mu(AA\inv A)\leq k<\infty$. Then there is a sequence $(X_n)_{n=0}^\infty$ of symmetric, internal subsets of $G$ such that $X_0\seq (AA\inv)^2$ and, for any $n\in\N$, $X^2_{n+1}\seq X_n$ and $(AA\inv)^4$ is covered by $O_{k,n}(1)$ $\langle AA\inv\rangle$-translates of $X_n$.
\item Suppose $\mu(A^3)\leq k<\infty$. Then there is a sequence $(X_n)_{n=0}^\infty$ of symmetric, internal subsets of $G$ such that $X_0\seq (AA\inv)^2\cap A^2A^{\nv 2}\cap (A\inv A)^2\cap A^{\nv 2}A^2$ and, for any $n\in\N$, $X^2_{n+1}\seq X_n$ and $\bar{A}^8$ is covered by $O_{k,n}(1)$ $\langle A\rangle$-translates of $X_n$.
\end{enumerate}
\end{corollary}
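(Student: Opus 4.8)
The plan is to iterate Lemma \ref{lem:MWKP} to build the decreasing sequence $(X_n)$, using the fact that each set produced by the lemma is itself an approximate subgroup, so that the hypotheses of the lemma are preserved along the iteration. I will treat part $(b)$; part $(a)$ is identical but simpler. The first step is to fix the base case: apply Lemma \ref{lem:MWKP}$(b)$ with $m=8$ and (say) $n=2$ to the internal set $A$, which satisfies $0<\mu(A)<\infty$ and $\mu(A^3)\leq k\mu(A)$. This yields a symmetric internal set $X_0$ with $X_0^2\seq(AA\inv)^2\cap A^2A^{\nv2}\cap(A\inv A)^2\cap A^{\nv2}A^2$ (in particular $X_0$ lies inside this intersection, taking $n=2\geq 1$), and with $\bar A^8$ covered by $O_{k}(1)$ many $\bar A^8$-translates—hence $\langle A\rangle$-translates—of $X_0$.

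For the inductive step, suppose $X_n$ has been constructed as a symmetric internal set such that $\bar A^8$ is covered by $O_{k,n}(1)$ $\langle A\rangle$-translates of $X_n$. The key observation is that $X_n$ has bounded tripling with respect to $\mu$: since $\bar A^8$ is covered by $O_{k,n}(1)$ translates of $X_n$ and $X_n\seq\bar A^8$ (which follows because $X_0\seq\bar A^2$ and each subsequent $X_{n+1}\seq X_n$), we get $\mu(X_n)=\Theta_{k,n}(\mu(\bar A^8))=\Theta_{k,n}(\mu(A))$ by Proposition \ref{prop:Ruz}$(a)$; and $X_n^3\seq\bar A^{24}$, so $\mu(X_n^3)\leq\mu(\bar A^{24})\leq O_k(\mu(A))\leq O_{k,n}(\mu(X_n))$, again by Proposition \ref{prop:Ruz}$(a)$. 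Thus $\mu(X_n^3)\leq k'\mu(X_n)$ for some $k'=k'(k,n)$. Now apply Lemma \ref{lem:MWKP}$(b)$ to $X_n$ with parameters $m=2$ and $n'=2$, producing a symmetric internal set $X_{n+1}$ with $X_{n+1}^2\seq(X_nX_n\inv)^2\seq X_n$ (using symmetry of $X_n$) and with $\bar X_n^2\supseteq X_n$ covered by $O_{k',2}(1)=O_{k,n}(1)$ translates of $X_{n+1}$ lying in $\bar X_n^2\seq\langle A\rangle$. Composing coverings, $\bar A^8$ is then covered by $O_{k,n}(1)$ $\langle A\rangle$-translates of $X_{n+1}$, which closes the induction and gives the sequence.

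The main point requiring care—and the one place the argument could go wrong—is the bookkeeping of the covering constants: one must check that at each stage the tripling constant $k'$ of $X_n$ depends only on $k$ and $n$ (not on the unbounded index $s$ or on accumulated losses), so that the output constant $O_{k,n}(1)$ is legitimately of the claimed form, and that the chain of inclusions $X_{n+1}\seq X_n\seq\cdots\seq X_0\seq\bar A^2$ really does hold so that $X_n^3\seq\bar A^{24}$ can be invoked. Both of these are routine given the explicit dependence in Lemma \ref{lem:MWKP} and Proposition \ref{prop:Ruz}, but they are the substance of the proof. For part $(a)$ one runs the same iteration starting from Lemma \ref{lem:MWKP}$(a)$ with $m=4$, using Proposition \ref{prop:Ruz}$(b)$ in place of $(a)$ to control $\mu((X_nX_n\inv)^j)$ and noting that $X_n\seq (AA\inv)^{2}$ throughout, so that $X_n$ has small alternation and the lemma reapplies.
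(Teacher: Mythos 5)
Your base step and the observation that each $X_n$ inherits bounded tripling (because $X_n\seq\bar A^8$ and $\bar A^8$ is covered by $O_{k,n}(1)$ translates of $X_n$) are fine. The problem is in the inductive step, specifically the claimed inclusion $X_{n+1}^2\seq(X_nX_n\inv)^2\seq X_n$. Since $X_n$ is symmetric, $(X_nX_n\inv)^2=X_n^4$, and $X_n^4\seq X_n$ is false in general (it would force $X_n$ to be a subgroup). Lemma \ref{lem:MWKP}$(b)$ applied to $X_n$ only places the output inside $X_n^4$ (and the other three products, which also equal $X_n^4$ by symmetry), not inside $X_n$. So the inductive step as written does not produce $X_{n+1}^2\seq X_n$, which is precisely the property the corollary asserts. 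A minor related slip: $X_0\seq W\seq(AA\inv)^2=\bar A^4$, not $\bar A^2$; this does not affect the bound $X_n\seq\bar A^8$ but the claimed chain of inclusions is off by a power.

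The paper avoids this by keeping an auxiliary sequence $(Y_n)$ one ``power level'' below: start by taking $Y_0$ with $Y_0^8\seq W$ (apply the lemma with $n=8$), and inductively obtain a symmetric internal $Y_{n+1}$ with $Y_{n+1}^8\seq Y_n^4$ (apply the lemma with $n=8$ to $X=Y_n$, noting $(Y_nY_n\inv)^2=Y_n^4$). Then set $X_n:=Y_n^4$; this gives $X_0=Y_0^4\seq Y_0^8\seq W$ and $X_{n+1}^2=Y_{n+1}^8\seq Y_n^4=X_n$, and the covering constants compose exactly as you outline. Your tripling-constant bookkeeping carries over verbatim to the $Y_n$'s (since $Y_n\seq Y_n^8\seq W$ and $V$ is covered by $O_{k,n}(1)$ translates of $Y_n$), so with this one modification your argument matches the paper's.
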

\begin{proof}
As in Lemma \ref{lem:MWKP}, we prove the two statements in parallel. Set
\[
(V,W,\Sigma)=\begin{cases}
((AA\inv)^4,(AA\inv)^2,\langle AA\inv\rangle) & \text{in case $(a)$,}\\
(\bar{A}^8,(AA\inv)^2\cap A^2A^{\nv 2}\cap (A\inv A)^2\cap A^{\nv 2}A^2,\langle A\rangle) & \text{in case $(b)$.}
\end{cases}
\]

We construct a sequence $(Y_n)_{n=0}^\infty$ of symmetric, internal subsets of $G$, such that $Y_0^8\seq W$ and, for all $n\in\N$, $Y_{n+1}^8\seq Y_n^4$ and $V$ is covered by $O_{K,n}(1)$ $\Sigma$-translates of $Y_n$. Given this, the result follows by setting $X_n=Y^4_n$.

By Lemma \ref{lem:MWKP}, there is a symmetric, internal set $Y_0\seq G$ such that $Y_0^8\seq W$ and $V$ is covered by $O_k(1)$ $V$-translates of $Y_0$. Suppose we have constructed $Y_0,\ldots,Y_n$ satisfying the desired properties. Note that $\mu(W)<\infty$ by Proposition \ref{prop:Ruz}, and so $\mu(Y_n)<\infty$ since $Y_n\seq Y^8_n\seq W$. Since $V$ is covered by $O_{k,n}(1)$ translates of $Y_n$, we have $0<\mu(V)\leq O_{k,n}(1)\mu(Y_n)$, and so $\mu(Y_n)>0$. Since $Y^3_n\seq W$, we also have $\mu(Y^3_n)\leq O_{k,n}(1)\mu(Y_n)$. By Lemma \ref{lem:MWKP}$(b)$, there is a symmetric, internal $Y_{n+1}\seq G$ such that $Y_{n+1}^8\seq Y_n^4$ and $Y_n^8$ is covered by $O_{k,n+1}(1)$ $Y_n^8$-translates of $Y_{n+1}$. Since $Y_n\seq Y_n^8\seq W$, it follows that $Y_n$ is covered by $O_{k,n+1}(1)$ $W$-translates of $Y_{n+1}$. Since $V$ is covered by $O_{k,n}(1)$ $\Sigma$-translates of $Y_n$, it follows that $V$ is covered by $O_{k,n+1}(1)$ $\Sigma$-translates of $Y_{n+1}$.
\end{proof}

\section{Proof of Theorem \ref{thm:mainbdd}}\label{sec:proofbdd}

The following theorem is  \cite[Theorem 6.15]{BGT}. It can also be deduced from \cite[Corollary 4.18]{HruAG}.

\begin{theorem}[\cite{BGT}]\label{thm:BGTbdd}
Let $G$ be a group of exponent $r$, and suppose $X\seq G$ is a $k$-approximate subgroup. Then $X^4$ contains a subgroup $H\leq G$ such that $X$ is covered by $O_{k,r}(1)$ left cosets of $H$.
\end{theorem}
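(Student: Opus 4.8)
The plan is to deduce this from the general Breuillard--Green--Tao structure theorem quoted above as Theorem \ref{thm:BGTmain} (\cite[Theorem 1.6]{BGT}), using the elementary fact that a finitely generated nilpotent group of finite exponent is finite, with order bounded in terms of its rank, step, and exponent.

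First I would invoke Theorem \ref{thm:BGTmain}: since $X$ is a $k$-approximate subgroup of $G$, it supplies a subgroup $H^{*}\leq G$ and a \emph{finite} normal subgroup $N\triangleleft H^{*}$ such that $X$ is covered by $O_{k}(1)$ left translates of $H^{*}$, the quotient $H^{*}/N$ is nilpotent and finitely generated of rank and step $O_{k}(1)$, and $X^{4}$ contains $N$. Next I would use the exponent hypothesis: $H^{*}/N$ has exponent dividing $r$, and a $d$-generated nilpotent group of class $c$ and exponent dividing $r$ is finite of order at most some explicit function of $c,d,r$ --- by induction on the class, the factors of the lower central series are finitely generated abelian of rank $O_{c,d}(1)$ and exponent dividing $r$, hence each of size $\leq r^{O_{c,d}(1)}$. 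Thus $[H^{*}:N]=|H^{*}/N|\leq O_{k,r}(1)$. Finally I would take $H=N$: it is a subgroup of $G$ contained in $X^{4}$, and since $X$ is covered by $O_{k}(1)$ left translates of $H^{*}$ while $H^{*}$ is a union of $[H^{*}:N]\leq O_{k,r}(1)$ left cosets of $N=H$, the set $X$ is covered by $O_{k,r}(1)$ left cosets of $H$, which is exactly the conclusion.

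This deduction is essentially formal once Theorem \ref{thm:BGTmain} is granted; the only point needing real care is the quantitative form of ``finitely generated nilpotent of finite exponent is finite,'' so that the bound on $[H^{*}:N]$ depends only on $k$ and $r$. Alternatively, to recover the more self-contained argument that \cite{BGT} actually give (their Theorem 6.15) without invoking the full strength of their main theorem, I would go through the Sanders--Croot--Sisask lemma: iterating Lemma \ref{lem:MWKP} exactly as in the proof of Corollary \ref{cor:MWKP} (discrete case, with $A=X$, using $|X^{3}|\leq k^{2}|X|$) yields a descending chain of finite symmetric sets $X\supseteq X_{0}\supseteq X_{1}^{2}\supseteq X_{1}\supseteq X_{2}^{2}\supseteq\cdots$ all contained in $(XX\inv)^{2}=X^{4}$, with $X$ covered by $O_{k,n}(1)$ translates of $X_{n}$ for every $n$; the intersection $\bigcap_{n}X_{n}$ is then a subgroup of $G$ contained in $X^{4}$. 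The hard part of this route --- and the place where bounded exponent is essential --- is to show that finitely many terms of the chain already generate (or contain) a subgroup inside $X^{4}$ that covers $X$ in only $O_{k,r}(1)$ cosets; collapsing an ``almost decreasing'' chain of approximate groups into an honest subgroup of controlled index, using exponent $r$, is the crux (the analogous step in Hrushovski's framework being \cite[Corollary 4.18]{HruAG}).
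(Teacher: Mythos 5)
The paper quotes this result as \cite[Theorem 6.15]{BGT} (noting it is alternatively derivable from \cite[Corollary 4.18]{HruAG}) without reproducing a proof, so there is no internal argument to compare against.

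Your first route is a correct and complete deduction from Theorem \ref{thm:BGTmain}. The key computation checks out: $H^{*}/N$ is nilpotent of rank and step $O_{k}(1)$, and since it is a quotient of the subgroup $H^{*}\leq G$ it has exponent dividing $r$; each lower central factor $\gamma_{i}/\gamma_{i+1}$ is abelian, generated by the $O_{k}(1)$ basic commutators of weight $i$ in a generating set, and of exponent dividing $r$, hence of size $\leq r^{O_{k}(1)}$, giving $[H^{*}:N]=|H^{*}/N|\leq O_{k,r}(1)$. Taking $H=N\seq X^{4}$ and splitting each of the $O_{k}(1)$ left translates of $H^{*}$ covering $X$ into $[H^{*}:N]$ left cosets of $N$ yields the cover of $X$ by $O_{k,r}(1)$ left cosets of $H$. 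This is a legitimate derivation; the only cost is that it invokes the full-strength classification in Theorem \ref{thm:BGTmain} rather than the more localized bounded-exponent arguments.

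Your second route, however, is not yet a proof. You correctly identify the crux — converting the descending chain $(X_{n})$ of commensurable approximate subgroups inside $X^{4}$ into a genuine subgroup of $X^{4}$ of index $O_{k,r}(1)$ in a set covering $X$ — but you leave it entirely open. That collapse is exactly where the substance of \cite[Theorem 6.15]{BGT} or \cite[Corollary 4.18]{HruAG} lies, and it is not implied by Lemma \ref{lem:MWKP} or Corollary \ref{cor:MWKP} alone (which only produce the chain). So the second route should be read as motivation, not as a proof; the first route is the one that stands on its own.
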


We now give the proof of Theorem \ref{thm:mainbdd}.

\begin{proof}[Proof of Theorem \ref{thm:mainbdd}]
We prove parts $(1)$ and $(2)$ of the theorem in two parallel cases.
Fix positive integers $k$, $m$, and $r$. Let $G$ be a group and fix $A\seq G$ nonempty and finite, with $|AA\inv A|\leq k|A|$ in case $(1)$ and $|A^3|\leq k|A|$ in case $(2)$.
Set
\[
(V,W,\Sigma)=\begin{cases}
(AA\inv,(AA\inv)^2,\langle AA\inv\rangle) & \text{in case $(1)$,}\\
(\bar{A},(AA\inv)^2\cap A^2A^{\nv 2}\cap (A\inv A)^2\cap A^{\nv 2}A^2,\langle A\rangle) & \text{in case $(2)$.}
\end{cases}
\]
By increasing $m$ if necessary, we may assume $W\seq V^m$ without loss of generality.

Assume $\Sigma$ has exponent $r$. By Lemma \ref{lem:MWKP}, there is a nonempty finite symmetric set $Y\seq G$ such that $Y^4\seq W$ and $V^m$ is covered by $O_{k,m}(1)$ left translates of $Y$. Since $Y^2\seq Y^4\seq W\seq V^m$, it follows that $Y$ is an $O_{k,m}(1)$-approximate group. By Theorem \ref{thm:BGTbdd}, $Y^4$ contains a subgroup $H\leq G$ such that $Y$ is covered by $O_{k,m,r}(1)$ left cosets of $H$. Then $H\seq W$ and $V^m$ is covered by $O_{k,m,r}(1)$ left cosets of $H$. This proves part $(a)$ in both cases $(1)$ and $(2)$. 

For part $(b)$, suppose $\Sigma=V^m$. By part $(a)$ there is a subgroup $K\leq \Sigma$, of index $O_{k,m,r}(1)$, such that $K\seq W$. If $H=\bigcap_{g\in \Sigma}gKg\inv$, then $H$ is normal in $\Sigma$, $H\seq W$, and $[G:H]\leq [G:K]!\leq O_{k,m,r}(1)$. 
\end{proof}

\section{Saturated extensions and approximate Bohr neighborhoods}\label{sec:G*}

Throughout this section, let $G$ be an ultraproduct constructed as in Section \ref{sec:G}. We will now endow  $G$ with a first-order structure, and then take a sufficiently saturated elementary extension $G_*$. Specifically, we define the \textbf{internal language of $G$}, denoted $\cL$, to be the group language together with a unary relation $R_X$ for any internal $X\seq G$. We view $G$ as an $\cL$-structure by interpreting each $R_X$ as $X$. We also view each $G_s$ as an $\cL$-structure by interpreting $R_X$ as some set $X(G_S)\seq G_s$, so that $X=\prod_{\cU}X(G_s)$. In particular, $G$ is also the ultraproduct of the sequence of \emph{$\cL$-structures} $(G_s)_{s\in\N}$.

Now let $G_*$ be a sufficiently saturated elementary extension of $G$ with respect to the language $\cL$.\footnote{``Sufficiently saturated" typically means $\kappa$-saturated and strongly $\kappa$-homogeneous for some very large (e.g. strongly inaccessible) cardinal $\kappa$.} When we say $X\seq G_*$ (resp. $X\seq G$) is \emph{definable}, we mean definable in the language $\cL$ using parameters from $G_*$ (resp. from $G$). If we want to specify that $X$ is definable using parameters from some set $C$, we will say \emph{$C$-definable}.  Let $A_*$ be the interpretation in $G_*$ of the predicate in $\cL$ naming $A$.

Note that the measure $\mu$ naturally extends to $G$-definable subsets of $G_*$. In particular, given a $G$-definable set $X\seq G_*$, the interpretation $X(G)$ of $X$ in $G$ is internal, and so we let $\mu(X)=\mu(X(G))$. We say that a $G$-definable set $X\seq G_*$ is \textbf{pseudofinite} if $X(G)$ is an ultraproduct of finite sets.

\begin{remark}
Although it will not be  necessary for our work, we recall that $\mu$ can be extended (not necessarily uniquely) to all definable subsets of $G_*$. For example, one can add a sort for $[0,1]$ and a function $f_\phi$ for each formula $\phi(x;\ybar)$, from the home sort to $[0,1]$, which is interpreted as $f_\phi(\bbar)=\mu(\phi(G;\bbar))$. Then take $G_*$ to be a saturated extension in this larger language.  See \cite[Section 2]{HPP}. 
\end{remark} 

A cardinal is \textbf{bounded} if it is strictly smaller than the saturation of $G_*$. A set $X\seq G_*$ is \textbf{type-definable} (resp. \textbf{countably type-definable}) if it is an intersection of a bounded (resp. countable) number of definable subsets of $G_*$.

Now suppose $\Sigma$ is a definable subgroup of $G_*$, and  $\Gamma$ is a type-definable normal subgroup of $\Sigma$ such that $[\Sigma:\Gamma]$ is bounded. Call a set $X\seq \Sigma/\Gamma$  \textbf{closed} if $\pi\inv(X)$ is type-definable, where $\pi$ is the canonical projection from $\Sigma$. It is a standard fact that this defines a topology on $\Sigma/\Gamma$, called the \textbf{logic topology}, under which $\Sigma/\Gamma$ is a compact (Hausdorff) topological group. If  $\Gamma$ is countably type-definable, then $\Sigma/\Gamma$ is second countable. See \cite[Section 2]{PilCLG} for details.

The rest of this section summarizes some tools from \cite{CPTNIP} concerning Bohr neighborhoods in $G_*$ and issues regarding their transfer to $G$ and the groups $G_s$.

Given a compact space $\cX$, we say that a map $f\colon G\to \cX$ is \textbf{definable} if $f\inv(C)$ is type-definable for any closed $C\seq \cX$.
The next proposition is a special case of \cite[Proposition 5.1]{CPTNIP}, and crucially relies on the result of Pillay \cite{PiRCP} that the connected component of a definable compactification of a pseudofinite group is abelian.

\begin{proposition}\label{prop:Bohr}
Suppose $\Sigma$ is a $G$-definable pseudofinite subgroup of $G_*$, and $\Gamma\leq \Sigma$ is a countably type-definable bounded-index normal subgroup of $\Sigma$. Then there is a decreasing sequence $(X_i)_{i=0}^\infty$ of definable subsets of $\Sigma$ such that $\Gamma=\bigcap_{i=0}^\infty X_i$ and, for all $i\in\N$, there are:
\begin{enumerate}[\hspace{5pt}$\ast$]
\item a definable finite-index normal subgroup $H_i\leq \Sigma$, and
\item a definable homomorphism $\pi_i\colon H_i\to \T^{n_i}$, for some $n_i\in\N$, 
\end{enumerate}
such that $\Gamma\seq \ker\pi_i\seq X_i\seq H_i$. If, moreover, $\Sigma/\Gamma$ is abelian, then we may assume $H_i=\Sigma$ for all $i\in\N$. 
\end{proposition}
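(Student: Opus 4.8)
The plan is to pass through the structure of the compact group $\Sigma/\Gamma$ and pull back a suitable inverse system of Lie quotients. First I would equip $\Sigma/\Gamma$ with the logic topology, under which it is a compact Hausdorff group, and — since $\Gamma$ is countably type-definable — second countable. Let $\pi\colon\Sigma\to\Sigma/\Gamma$ be the projection, and write $(\Sigma/\Gamma)^0$ for the connected component of the identity. By the result of Pillay \cite{PiRCP} (this is exactly where the hypothesis that $\Sigma$ is pseudofinite is used), $(\Sigma/\Gamma)^0$ is a compact connected \emph{abelian} group; since $\Sigma/\Gamma$ is compact and second countable, $[\Sigma/\Gamma:(\Sigma/\Gamma)^0]$ is finite (the component group of a second-countable compact group is profinite and second countable, hence finite), and moreover $(\Sigma/\Gamma)^0$ is an inverse limit of a countable directed system of tori $\T^{n}$ by Pontryagin duality (its dual is a countable torsion-free discrete abelian group, written as an increasing union of finitely generated free groups). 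This gives a countable neighborhood basis of the identity in $\Sigma/\Gamma$ of the following shape: open subgroups $\bar H_i$ of finite index (coming from the finite component group together with the inverse-limit structure) together with continuous surjections $\bar\pi_i\colon \bar H_i\to\T^{n_i}$, such that $\bigcap_i\ker\bar\pi_i=\{1\}$, and such that the preimages $\bar\pi_i\inv(B^{n_i}_{\operatorname{id},1/i})$ form a neighborhood basis at the identity.

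Next I would pull everything back through $\pi$. Since $\bar H_i$ is a clopen finite-index subgroup of $\Sigma/\Gamma$, its preimage $H_i:=\pi\inv(\bar H_i)$ is a type-definable finite-index subgroup of $\Sigma$ containing $\Gamma$; because it has bounded (indeed finite) index and is type-definable, a standard compactness argument shows $H_i$ is in fact definable, and we may arrange it to be normal by intersecting with its finitely many conjugates (or by noting $\bar H_i$ can be taken normal, as finite-index open subgroups of a compact group contain a normal one of finite index). The composite $\pi_i:=\bar\pi_i\circ(\pi{\upharpoonright}H_i)\colon H_i\to\T^{n_i}$ is a definable homomorphism — definability here is the statement that the preimage of every closed subset of $\T^{n_i}$ is type-definable, which follows because $\bar\pi_i$ is continuous and $\pi$ is by construction a definable map into $\Sigma/\Gamma$. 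Then $\ker\pi_i=\pi\inv(\ker\bar\pi_i)$ is type-definable and contains $\Gamma$, with $\bigcap_i\ker\pi_i=\pi\inv(\bigcap_i\ker\bar\pi_i)=\pi\inv(\{1\})=\Gamma$. Finally, setting $X_i$ to be (a definable set squeezed between) $\ker\pi_i$ and $H_i$ — concretely one can take $X_i$ to be the preimage of a small definable neighborhood, or just a finite definable approximation to $\ker\pi_i$ — and replacing the sequence by the sequence of finite intersections $\bigcap_{j\le i}X_j$ to make it decreasing, we get $\Gamma=\bigcap_i X_i$ with $\Gamma\seq\ker\pi_i\seq X_i\seq H_i$ for all $i$.

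For the "moreover" clause: if $\Sigma/\Gamma$ is abelian, then it is already a second-countable compact abelian group, so by Pontryagin duality it is itself an inverse limit of tori (with finite pieces absorbed), and the clopen subgroups $\bar H_i$ can all be taken equal to $\Sigma/\Gamma$; pulling back gives $H_i=\Sigma$ for all $i$, with $\pi_i\colon\Sigma\to\T^{n_i}$ definable homomorphisms whose kernels intersect down to $\Gamma$. The main obstacle I expect is not the topology-to-definability translation (which is routine compactness/saturation, packaged essentially in \cite[Section 2]{PilCLG} and \cite[Section 2]{PilCLG}-style arguments) but rather making sure the maps $\pi_i$ are \emph{definable} in the required sense and that the $H_i$, $X_i$ can all be chosen definable (not merely type-definable) and arranged into a single decreasing sequence whose intersection is exactly $\Gamma$ — i.e. carefully bookkeeping the inverse-limit presentation of $(\Sigma/\Gamma)^0$ against the countable type-definability of $\Gamma$. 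Since this is precisely the content of \cite[Proposition 5.1]{CPTNIP}, of which the present statement is a special case, the cleanest route is to cite that result and only indicate the specialization.
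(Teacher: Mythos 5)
The paper itself gives no proof of this proposition: it simply cites \cite[Proposition 5.1]{CPTNIP}, noting the dependence on Pillay's theorem from \cite{PiRCP}. Your proposal reconstructs the underlying argument, which is the right approach, and at the end you also identify citation of \cite{CPTNIP} as the cleanest route — which is exactly what the paper does.

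That said, there is one genuine error in the reconstruction. You assert that the component group $\Sigma/\Gamma \,/\, (\Sigma/\Gamma)^0$ is finite because it is ``profinite and second countable, hence finite''. This implication is false: a second-countable profinite group need not be finite (e.g.\ $\Z_p$, or $\prod_{n\in\N}\Z/2\Z$). Consequently you cannot take a single clopen finite-index subgroup sitting above $(\Sigma/\Gamma)^0$ to pull back to a single $H$. This error is repairable precisely because the proposition lets the finite-index subgroup $H_i$ vary with $i$: rather than using finiteness of the component group, one should use that a second-countable compact group $K$ with $K^0$ abelian has a neighborhood basis at $1$ of closed normal subgroups $N_i$ with $K/N_i$ a compact Lie group; then $(K/N_i)^0$ is a torus $\T^{n_i}$, its preimage $\bar H_i\leq K$ is open (hence finite-index) and normal, and the quotient map $\bar H_i\to\bar H_i/N_i\cong\T^{n_i}$ is the desired $\bar\pi_i$. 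Pulling back along $\pi$ as you describe then yields the $H_i$, $\pi_i$, and (after the standard bookkeeping of nesting and squeezing definable $X_i$ between the type-definable $\ker\pi_i$ and $H_i$, using that $\Gamma$ is a countable intersection of definable sets) the decreasing sequence $(X_i)$. Your remaining steps — definability of $H_i$ from type-definability plus finite index, definability of $\pi_i$ via continuity, the abelian ``moreover'' clause via Pontryagin duality with $\bar H_i=\Sigma/\Gamma$ — are all sound.
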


In the setting of the previous proposition, the fact that $X_i$ is definable and contains $\ker\pi_i$ implies that it  contains a Bohr neighborhood $B^{n_i}_{\pi_i,\epsilon_i}$ for sufficiently small $\epsilon_i>0$. However, these Bohr neighborhoods are not necessarily definable, and so we will need to approximate them by definable objects.

\begin{definition}\label{def:approx}
Fix a group $H$ and an integer $n\in\N$.
\begin{enumerate}
\item Given $\delta>0$, we say that a function $f\colon H\to\T^n$ is a \textbf{$\delta$-homomorphism} if $f(1)=0$ and, for all $x,y\in H$, $d_n(f(xy),f(x)+f(y))<\delta$.
\item Given $\delta,\epsilon>0$, we say that $Y\seq H$ is a \textbf{$\delta$-approximate $(\epsilon,n)$-Bohr neighborhood in $H$} if there is a $\delta$-homomorphism $f\colon H\to \T^n$ such that $Y=\{x\in H:d_n(f(x),0)<\epsilon\}$. 
\item Assume $H$ is a definable subgroup of $G_*$, and $\pi\colon H\to \T^n$ is a definable homomorphism. Given an integer $t\geq 1$, we say that a decreasing sequence $(Y_i)_{i=0}^\infty$ of subsets of $H$ is a \textbf{definable $(t,\pi)$-approximate Bohr chain in $H$} if $\bigcap_{i=0}^\infty Y_i=\ker\pi$ and there is a decreasing sequence $(\delta_i)_{i=0}^\infty$ in $\R_{>0}$ converging to $0$ such that, for all $i\geq 0$, $Y_i=\{x\in H:d_n(f_i(x),0)<t\delta_i\}$ for some definable $\delta_i$-homomorphism $f_i\colon H\to \T^n$ with finite image. 
\end{enumerate}
\end{definition}

Note that if $(Y_i)_{i=0}^\infty$ is a definable $(t,\pi)$-approximate Bohr chain in $H$, then each $Y_i$ is a $\delta_i$-approximate $(t\delta_i,n)$-Bohr set in $H$. 
It is also worth emphasizing that each $Y_i$ is indeed a \emph{definable} subset of $H$ (see  \cite[Proposition 5.3]{CPTNIP}). The next result is a special case of \cite[Lemma 5.4]{CPTNIP}.

\begin{lemma}\label{lem:Bapprox}
Suppose $H\leq G_*$ is definable and $\pi\colon H\to \T^n$ is a definable homomorphism for some $n\in\N$. Then, for any integer $t\geq 1$, there is a definable $(t,\pi)$-approximate Bohr chain $(Y_i)_{i=0}^\infty$ in $H$.
\end{lemma}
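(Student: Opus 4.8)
\textbf{Proof plan for Lemma \ref{lem:Bapprox}.}

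The plan is to build the approximate Bohr chain by pulling back a carefully chosen sequence of finite approximations to the identity on the torus $\T^n$, exploiting the fact that $\pi$ is definable (so that pullbacks of definable sets are definable) and that $\T^n$ is compact. First I would fix, for each $i\in\N$, a finite subgroup $F_i\leq\T^n$ (for concreteness, the group $(\frac{1}{N_i}\Z/\Z)^n$ for a rapidly increasing sequence $N_i$) so that $F_i$ is a $\delta_i'$-net in $\T^n$ with $\delta_i'\to 0$, and so that $F_i\leq F_{i+1}$. Composing $\pi$ with a ``rounding'' map $\rho_i\colon\T^n\to F_i$ that sends each point to a nearest element of $F_i$ yields a map $f_i=\rho_i\circ\pi\colon H\to\T^n$ with finite image contained in $F_i$; since $\rho_i$ is within $\delta_i'$ of the identity and $\pi$ is an honest homomorphism, $f_i$ is a $\delta_i$-homomorphism for $\delta_i=2\delta_i'$ (or a similar explicit constant multiple), and $f_i(1)=0$ provided we choose $\rho_i$ to fix $0$. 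Crucially, $f_i$ is definable: $f_i(x)=v$ holds iff $\pi(x)$ lies in the (fixed, parameter-free) region of $\T^n$ rounding to $v$, and since $F_i$ is finite this is a finite disjunction of conditions of the form ``$\pi(x)$ lies in a prescribed definable subset of $\T^n$'', each of which is definable because $\pi$ is. Hence $Y_i:=\{x\in H: d_n(f_i(x),0)<t\delta_i\}$ is a definable subset of $H$ for every $i$.

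Next I would verify the two structural requirements: that the $Y_i$ are decreasing and that $\bigcap_i Y_i=\ker\pi$. Decreasing: by choosing the nets $F_i$ nested and the constants $\delta_i$ decreasing fast enough (e.g. $\delta_{i+1}$ small compared to the ``rounding resolution'' of $F_i$), one arranges that $d_n(f_{i+1}(x),0)<t\delta_{i+1}$ forces $d_n(f_i(x),0)<t\delta_i$; concretely, $f_i$ and $f_{i+1}$ differ by at most $2\delta_i'$, so if $t\delta_{i+1}+2\delta_i'<t\delta_i$ we get the inclusion $Y_{i+1}\seq Y_i$, and this inequality is satisfied by first fixing $F_i$ (hence $\delta_i'$, hence $\delta_i$) and only then choosing $F_{i+1}$ fine enough and $\delta_{i+1}$ small enough. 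For the intersection: if $x\in\ker\pi$ then $\pi(x)=0$, so $f_i(x)=\rho_i(0)=0$ for all $i$, whence $x\in Y_i$ for all $i$; conversely if $x\in\bigcap_i Y_i$ then $d_n(f_i(x),0)<t\delta_i\to 0$, and since $f_i(x)$ is within $\delta_i'\to 0$ of $\pi(x)$, we get $d_n(\pi(x),0)=0$, i.e. $\pi(x)=0$ and $x\in\ker\pi$. This also shows $\bigcap_i Y_i$ is exactly $\ker\pi$ as needed.

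I do not expect a serious obstacle here — the lemma is essentially bookkeeping — but the one point requiring care is the simultaneous juggling of the three parameter sequences $(F_i)$, $(\delta_i)$, $t$ so that definability, the decreasing property, and the convergence $\delta_i\to 0$ all hold at once; this is handled by the recursive choice described above (fix $F_i$, then $\delta_i$, then $F_{i+1}$ depending on both). A second minor point is checking the precise constant in ``$\delta$-homomorphism'': one must confirm that $d_n(f_i(xy),f_i(x)+f_i(y))\leq d_n(\rho_i\pi(xy),\pi(xy))+d_n(\pi(x)+\pi(y),\rho_i\pi(x)+\rho_i\pi(y))\leq \delta_i'+2\delta_i'=3\delta_i'$, using the triangle inequality and invariance of $d_n$, so taking $\delta_i=3\delta_i'$ works; the exact constant is immaterial. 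Finally, I would remark that the definability of each $f_i$ (equivalently each $Y_i$) is exactly the content of \cite[Proposition 5.3]{CPTNIP}, which may be cited rather than reproved, leaving only the elementary Euclidean estimates on $\T^n$ above.
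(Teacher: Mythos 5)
Your plan correctly identifies the skeleton of the argument (rounding $\pi$ to a finite lattice $F_i\leq\T^n$, taking $\delta_i\to 0$, verifying the $\delta$-homomorphism estimate, the nesting of the $Y_i$, and $\bigcap_iY_i=\ker\pi$), and the analytic estimates you give are fine. However, the step where you assert that $f_i=\rho_i\circ\pi$ is definable has a genuine gap, and it is precisely the step that carries the model-theoretic content of the lemma.

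Recall that ``$\pi$ is definable'' (as defined at the start of Section \ref{sec:G*}) means only that $\pi\inv(C)$ is \emph{type-definable} for every \emph{closed} $C\seq\T^n$; dually, $\pi\inv(U)$ is $\bigvee$-definable (a union of definable sets) for every open $U$. It does \emph{not} mean that $\pi\inv(S)$ is definable for an arbitrary ``prescribed region'' $S\seq\T^n$. The fibers of your rounding map $\rho_i$ are half-open Voronoi cells, which are neither open nor closed; their $\pi$-preimages are squeezed between a type-definable set and a $\bigvee$-definable set and are in general not definable at all. So the sentence ``each of which is definable because $\pi$ is'' does not follow from the hypotheses, and $f_i=\rho_i\circ\pi$ need not be a definable map with finite image. (Citing \cite[Proposition 5.3]{CPTNIP} does not repair this: that result concerns the definability of $Y_i$ \emph{once} $f_i$ is already known to be a definable $\delta$-homomorphism with finite image, which is built into Definition \ref{def:approx}(3); it does not manufacture the $f_i$.)

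The missing ingredient is a saturation argument. The standard construction (and this is what \cite[Lemma 5.4]{CPTNIP} does) starts from a finite \emph{open} cover of $\T^n$ by balls $B(v,\delta_i')$ around the lattice points $v\in F_i$, notes that the preimages $\pi\inv(B(v,\delta_i'))$ are $\bigvee$-definable and cover the definable set $H$, then uses saturation of $G_*$ to extract a finite definable subcover, disjointifies it to get a finite definable partition $\{E_l\}$ of $H$, and only then defines $f_i$ to be constant (with value the corresponding lattice point) on each $E_l$. This $f_i$ is definable by fiat, has finite image by construction, and satisfies $d_n(f_i(x),\pi(x))<\delta_i'$ for all $x\in H$, after which your estimates go through verbatim. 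So the route you propose is essentially correct modulo this compactness step, but without it the construction of $f_i$ — and hence the lemma — is not proved.
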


Finally, we state a special case of \cite[Corollary 4.4]{CPTNIP}, which is an immediate consequence of \cite[Theorem 5.13]{AlGlGo}.

\begin{proposition}\label{prop:findBohr}
There is a real number $\theta>0$ such that if $H$ is a finite group, $n\in\N$, and $0<\delta<\theta$, then every $\delta$-approximate $(3\delta,n)$-Bohr neighborhood in $H$ contains a $(\delta,n)$-Bohr neighborhood in $H$.
\end{proposition}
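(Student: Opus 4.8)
The statement we want is Proposition~\ref{prop:findBohr}: there is an absolute $\theta>0$ such that, in any finite group $H$, for $0<\delta<\theta$, every $\delta$-approximate $(3\delta,n)$-Bohr neighborhood contains a genuine $(\delta,n)$-Bohr neighborhood. As the text remarks, this is meant to be a direct consequence of the result on approximate homomorphisms from \cite{AlGlGo} (Theorem~5.13 there) together with the formulation in \cite[Corollary~4.4]{CPTNIP}. So the plan is to unwind the definitions and feed the $\delta$-homomorphism witnessing the approximate Bohr neighborhood into the approximate-homomorphism-correction result.

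Concretely, I would proceed as follows. Suppose $Y\seq H$ is a $\delta$-approximate $(3\delta,n)$-Bohr neighborhood, witnessed by a $\delta$-homomorphism $f\colon H\to\T^n$, so $Y=\{x\in H: d_n(f(x),0)<3\delta\}$. The group $H$ is finite (hence compact and amenable), $\T^n$ is a compact connected abelian Lie group, and $f$ satisfies $f(1)=0$, $d_n(f(xy),f(x)+f(y))<\delta$ for all $x,y$. The key input is that an $\epsilon$-approximate homomorphism from an amenable (in particular finite) group into $\T^n$ can be uniformly approximated by a genuine homomorphism: there is a function $\eta(\epsilon)\to 0$ as $\epsilon\to 0$ (independent of $H$ and, crucially, behaving well enough in $n$ — this is where one must cite the precise quantitative bound from \cite{AlGlGo}, via \cite[Corollary~4.4]{CPTNIP}) and a homomorphism $\tau\colon H\to\T^n$ with $d_n(f(x),\tau(x))<\eta(\delta)$ for all $x\in H$. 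One then chooses $\theta>0$ small enough that $\delta<\theta$ forces $\eta(\delta)\le\delta$; this is exactly the $\theta$ in the statement, and the factor $3$ in $3\delta$ is there to absorb the error: if $d_n(\tau(x),0)<\delta$ then $d_n(f(x),0)\le d_n(f(x),\tau(x))+d_n(\tau(x),0)<\eta(\delta)+\delta\le 2\delta<3\delta$, so $B^n_{\tau,\delta}\seq Y$, as desired.

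The remaining bookkeeping is to make sure the application of \cite{AlGlGo} is legitimate: one needs that the correcting homomorphism $\tau$ can be taken into the \emph{same} target $\T^n$ (not a larger torus), and that the error estimate $\eta(\delta)$ is genuinely uniform in $H$ and controlled in a way compatible with a \emph{single} absolute threshold $\theta$ — in particular one should check whether \cite[Corollary~4.4]{CPTNIP} already packages the dependence on $n$ inside the constant, or whether the $3\delta$-for-$\delta$ trade-off (rather than, say, $C\delta$-for-$\delta$) is precisely the form proved there, in which case the proof is just a citation plus the triangle-inequality computation above. Since the excerpt explicitly says Proposition~\ref{prop:findBohr} ``is a special case of \cite[Corollary~4.4]{CPTNIP}, which is an immediate consequence of \cite[Theorem~5.13]{AlGlGo}'', I would keep the proof to essentially one line of citation plus the short inclusion check.

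\textbf{Main obstacle.} There is no real mathematical obstacle here — the content is entirely in \cite{AlGlGo} and \cite{CPTNIP}. The only thing to be careful about is matching constants: confirming that the stability/correction theorem being invoked yields, for the specific radius ratio $3$ appearing in the definition of a $\delta$-approximate $(3\delta,n)$-Bohr neighborhood, a correcting homomorphism whose error is at most $\delta$ once $\delta$ is below an absolute threshold $\theta$ independent of $H$ and $n$. Once that is verified, the inclusion $B^n_{\tau,\delta}\seq Y$ follows immediately from the triangle inequality as above.
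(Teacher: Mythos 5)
Your proposal is correct and matches the paper's approach: the paper offers no argument beyond citing \cite[Corollary 4.4]{CPTNIP} (in turn a consequence of \cite[Theorem 5.13]{AlGlGo}), and your unpacking — correct the $\delta$-homomorphism $f$ to a genuine homomorphism $\tau\colon H\to\T^n$ with uniformly small error, then use the triangle inequality to see $B^n_{\tau,\delta}\seq\{x:d_n(f(x),0)<3\delta\}$ once $\delta$ is below an absolute threshold $\theta$ — is precisely the mechanism that citation encapsulates. Your flagged concern about uniformity in $n$ is the right thing to check, but it is handled in the cited corollary and does not constitute a gap in your argument.
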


\section{Proof of Theorem \ref{thm:maingen}}\label{sec:genproof}

\subsection{Transfer to $G_*$} Throughout this subsection, let $G$ be an ultraproduct constructed as in Section \ref{sec:G}, and let $G_*$ be the saturated extension from Section \ref{sec:G*}. The goal of this subsection is to transfer the analysis in Section \ref{sec:CSS} to the saturated group $G_*$. The main idea is that the decreasing sequence $(X_n)_{n=0}^\infty$ of internal sets constructed in Corollary \ref{cor:MWKP} converges to a subgroup of $G$, which is ``large" in a certain sense. By transferring the sequence first to $G_*$, we will have more precise control over exactly what this means, and it will be easier to find normal subgroups.  

Recall that $A_*$ is the interpretation in $G_*$ of our distinguished internal set $A\seq G$.

\begin{lemma}\label{lem:Gamma}$~$
\begin{enumerate}[$(a)$]
\item Suppose $\mu(A_*A\inv_* A_*)<\infty$. Then there is a countably type-definable subgroup $\Gamma\leq G_*$ such that $\Gamma\seq (A_*A_*\inv)^2$ and $\Gamma$ has index at most $2^{\aleph_0}$ in $\langle A_* A_*\inv\rangle$.
\item Suppose $\mu(A_*^3)<\infty$. Then there is a countably type-definable subgroup $\Gamma\leq G_*$ such that $\Gamma\seq (A_*A_*\inv)^2\cap A_*^2A_*^{\nv 2}\cap (A_*\inv A_*)^2\cap A_*^{\nv 2}A_*^2$ and $\Gamma$ has index at most $2^{\aleph_0}$ in $\langle  A_*\rangle$.
\end{enumerate}
\end{lemma}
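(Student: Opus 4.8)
\textbf{Proof plan for Lemma \ref{lem:Gamma}.} The plan is to transfer the decreasing sequence of internal sets produced by Corollary \ref{cor:MWKP} into the saturated model $G_*$ and take its intersection. Concretely: in case $(b)$, apply Corollary \ref{cor:MWKP}$(b)$ (whose hypothesis $\mu(A^3)\le k<\infty$ is exactly $\mu(A_*^3)<\infty$ read back in $G$) to obtain a sequence $(X_n)_{n=0}^\infty$ of symmetric internal subsets of $G$ with $X_0\seq (AA\inv)^2\cap A^2A^{\nv2}\cap(A\inv A)^2\cap A^{\nv2}A^2$, with $X_{n+1}^2\seq X_n$, and with $\bar A^8$ covered by $O_{k,n}(1)$ $\langle A\rangle$-translates of $X_n$. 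Each $X_n$ is internal, hence corresponds to a $G$-definable subset of $G_*$; let me abuse notation and call its interpretation $X_n$ as well. Set $\Gamma=\bigcap_{n=0}^\infty X_n$. This is by construction a countable intersection of definable sets, i.e. countably type-definable. Case $(a)$ is identical using Corollary \ref{cor:MWKP}$(a)$, with $\bar A^8$ replaced by $(AA\inv)^4$ and the target product set replaced by $(AA\inv)^2$.

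The three things to verify are: (i) $\Gamma$ is a subgroup; (ii) $\Gamma$ lies in the stated product set; (iii) $\Gamma$ has index at most $2^{\aleph_0}$ in $\langle A_*\rangle$ (resp. $\langle A_*A_*\inv\rangle$). For (i): $1\in X_n$ for all $n$ and each $X_n$ is symmetric, so $\Gamma$ is symmetric and nonempty; for closure under multiplication, if $g,h\in\Gamma$ then for every $n$ we have $g,h\in X_{n+1}$, so $gh\in X_{n+1}^2\seq X_n$, hence $gh\in\Gamma$. For (ii): $\Gamma\seq X_0$, and $X_0$ is contained in the required product set by the first clause of Corollary \ref{cor:MWKP}. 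For (iii): fix $n$. Since $\bar A^8$ (resp. $(AA\inv)^4$) is covered by $O_{k,n}(1)$ $\langle A_*\rangle$-translates (resp. $\langle A_*A_*\inv\rangle$-translates) of $X_n$, and $\Gamma\seq X_n$, the set $\bar A^8$ meets only $O_{k,n}(1)$ many left cosets of $\Gamma$ inside $\langle A_*\rangle$ — more precisely, if $\bar A^8\seq \bigcup_{i=1}^{c_n} g_i X_n$ with $g_i\in\langle A_*\rangle$, then every element of $\bar A^8$ lies in some $g_i X_n\seq g_i\Gamma\Gamma$... this needs a little care. The clean way: a theorem of Neumann-type / a pigeonhole argument shows that if a generating set $\bar A$ (or $\bar A\bar A\inv$) of a group $\Sigma$ has $\bar A^{N}$ covered by boundedly many translates of a symmetric set $Y$ with $Y^2\seq \bar A^{N}$ for suitable $N$, then $\langle\bar A\rangle$ is covered by boundedly many translates of $Y$; applying this to $Y=X_n$ (using $X_n^2\seq X_{n-1}$ and that the covering constant only grows with $n$) gives that $\langle A_*\rangle$ is covered by $O_{k,n}(1)$ translates of $X_n$, hence of finitely many cosets of $\Gamma$ for each fixed $n$; but that only bounds the index by $\sup_n O_{k,n}(1)$ which is $\aleph_0$, and each coset of $\Gamma$ is cut out by the type "$g\inv x\in X_n$ for all $n$", of which there are at most $2^{\aleph_0}$ many consistent ones over a countable parameter set — alternatively, just note $\langle A_*\rangle=\bigcup_N \bar A^N$ is a countable union, each $\bar A^N$ meeting boundedly many cosets, so $[\langle A_*\rangle:\Gamma]\le\aleph_0\cdot\sup_n O_{k,n}(1)=\aleph_0\le 2^{\aleph_0}$. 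So the index bound is in fact soft.

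The main obstacle I anticipate is (iii), specifically getting from "$\bar A^8$ is covered by boundedly many translates of $X_n$" to a bound on $[\langle A_*\rangle:\Gamma]$: one must propagate the covering from the fixed product set $\bar A^8$ (or $(AA\inv)^4$) up to the whole generated subgroup $\langle A_*\rangle$. The tool is the standard covering lemma (e.g. \cite[Lemma 5.1]{BGT} combined with the fact that $X_n$, being sandwiched as $Y_n^4$ with $Y_n^8\seq W$ of finite $\mu$-measure, is an approximate group of bounded covering constant): from $X_n^2\seq X_{n-1}$ and $\bar A^8\seq FX_n$ for a bounded finite $F\seq\langle A_*\rangle$, an induction on the word length in $\bar A$ shows $\bar A^{8j}\seq F^{j}X_n^{?}$, and since $X_n$ absorbs its own powers up to a bounded loss (approximate-group property), $\langle A_*\rangle$ is covered by boundedly many translates of $X_n$ for each $n$; intersecting over $n$, $\langle A_*\rangle$ meets at most countably many cosets of $\Gamma$, which is more than enough for the claimed $2^{\aleph_0}$ bound. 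Everything else — symmetry, $1\in\Gamma$, closure under products, containment in the product set — is a direct unwinding of the properties listed in Corollary \ref{cor:MWKP} and requires no new ideas.
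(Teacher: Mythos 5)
Your overall strategy matches the paper's: apply Corollary \ref{cor:MWKP}, transfer the resulting decreasing sequence of internal sets to $G_*$, and set $\Gamma=\bigcap_n X_n$. Parts (i) (subgroup) and (ii) (containment in the product set) are correct and exactly as in the paper.

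However, your argument for (iii), the index bound, has a genuine gap, and your stated conclusion is in fact false. You claim that $[\langle A_*\rangle:\Gamma]\leq\aleph_0$ because ``each $\bar A^N$ meets boundedly many cosets'' of $\Gamma$. But covering $\bar A^8$ by finitely many translates of $X_n$ says nothing about how many cosets of $\Gamma$ it meets: a single translate $gX_n$ can meet $2^{\aleph_0}$ cosets of $\Gamma$, since $\Gamma=\bigcap_m X_m$ is typically far smaller than any single $X_n$. And the index is genuinely $2^{\aleph_0}$ in examples (take $G_s=(\Z/2\Z)^s$, $A_s=G_s$, and the $X_n$ decreasing internal subgroups of index $2^n$: then $\langle A_*\rangle/\Gamma$ is a nontrivial inverse limit of finite $2$-groups, of size $2^{\aleph_0}$). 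Your fallback appeal to ``at most $2^{\aleph_0}$ consistent types over a countable parameter set'' is also not a proof as stated, because the parameter $g$ in the type $\{g\inv x\in X_n : n\}$ ranges over the whole (large) group $\Sigma_*$; reducing to a countable parameter set is precisely what one has to do, and that reduction is the content of the covering step you're missing.

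The paper closes the gap as follows. Transferring Corollary \ref{cor:MWKP} to $G_*$ gives a finite $F\subseteq\Sigma_*$ with $V_*^2\subseteq FX_n$, where $V_*=(A_*A_*\inv)^2$ (case (a)) or $\bar A_*^4$ (case (b)). The key containment is $X_nV_*\subseteq W_*V_*\subseteq V_*^2$, which lets one prove $V_*^k\subseteq F^kX_n$ for all $k$ by induction: $V_*^{k+1}\subseteq F^kX_nV_*\subseteq F^kV_*^2\subseteq F^{k+1}X_n$. Hence $\Sigma_*=\bigcup_k V_*^k$ is covered by the \emph{countable} set $F_n:=\bigcup_k F^k$ of translates of $X_n$. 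Then the map sending $g\in\Sigma_*$ to a choice of $(f_n)_n\in\prod_n F_n$ with $g\in f_nX_n$ is injective on $\Gamma$-cosets (if $g,h\in f_nX_n$ for all $n$, then $g\inv h\in X_n^2\subseteq X_{n-1}$ for all $n$, so $g\inv h\in\Gamma$), giving $[\Sigma_*:\Gamma]\leq\aleph_0^{\aleph_0}=2^{\aleph_0}$. This product-over-$n$ argument, not a union-over-$N$ argument, is what yields the correct cardinal. Your detour through a ``Neumann-type theorem'' to cover $\langle A_*\rangle$ by \emph{finitely} many translates of each $X_n$ is both unnecessary and not established; the countable covering above is all that is needed and is what the paper uses.
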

\begin{proof}
Set
\[
(V,W,\Sigma)=\begin{cases}
(A A\inv)^2,(A A\inv)^2,\langle A A\inv\rangle) & \text{in case $(a)$}\\
(\bar{A}^4,(AA\inv)^2\cap A^2A^{\nv 2}\cap (A\inv A)^2\cap A^{\nv 2}A^2,\langle A\rangle) & \text{in case $(b)$.}
\end{cases}
\]
Let $(V_*,W_*,\Sigma_*)$ be similarly defined, but with $A_*$ in place of $A$. 

Working first in $G$, we apply Corollary \ref{cor:MWKP} to find a sequence $(Y_n)_{n=0}^\infty$ of symmetric, internal subsets of $G$ such that $Y_0\seq W$ and, for any $n\in\N$, $Y^2_{n+1}\seq Y_n$ and $V^2$ is covered by finitely many $\Sigma$-translates of $Y_n$. So, for any $n\in\N$, there is some $k_n\in\N$ such that $V^2$ is covered by finitely many $V^{k_n}$-translates of $Y_n$.

Now in $G_*$, let $X_n$ be the $\emptyset$-definable set given by the interpretation  of the unary relation $R_{Y_n}$. By elementarity,  $X_0\seq W_*$ and, for any $n\in\N$, $X_n$ is symmetric and internal, $X^2_{n+1}\seq X_n$, and $V_*^2$ is covered by finitely many $V_*^{k_n}$-translates of $X_n$. 

Fix $n\in\N$, and let $F\seq \Sigma_*$ be finite such that $V_*^2\seq FX_n$. By induction on $k\geq 1$, we show that $V_*^k\seq F^kX_n$. The base case is given, so assume the result for $k\geq 1$. Then $V_*^{k+1}=V^k_*V_*\seq F^kX_nV_*\seq F^kW_*V_*\seq F^kV^2_*\seq F^{k+1}X_n$.

We have shown that, for any $n\in\N$, there is a countable set $F_n\seq \Sigma_*$ such that $\Sigma_*=FX_n$. Let $\Gamma=\bigcap_{n=0}^\infty X_n$, and note that $\Gamma$ is a countably type-definable subgroup of $G_*$, which is contained in $W_*$. Since $\Sigma_*$ is covered by countably many $\Sigma_*$-translates of $X_n$ for all $n\geq 1$, it follows that $\Gamma$ has index at most $2^{\aleph_0}$ in $\Sigma_*$. 
\end{proof}

\begin{corollary}\label{cor:Gamma}$~$
\begin{enumerate}[$(a)$]
\item Suppose $\mu(A_*A\inv_* A_*)<\infty$. Then there is a countably type-definable subgroup $\Gamma\leq G_*$ such that:
\begin{enumerate}[$(i)$]
\item $\Gamma\seq (A_* A_*\inv)^2$, 
\item $\Gamma$ is normal in $\langle A_*A_*\inv\rangle$, and 
\item $\Gamma$ has index at most $2^{\aleph_0}$ in $\langle A_* A_*\inv\rangle$.
\end{enumerate}
\item Suppose $\mu(A_*^3)<\infty$. Then there is a countably type-definable subgroup $\Gamma\leq G_*$ such that:
\begin{enumerate}[$(i)$]
\item $\Gamma\seq (A_*A_*\inv)^2\cap A_*^2 A_*^{\nv 2}\cap (A_*\inv A_*)^2\cap A_*^{\nv 2}A_*^2$, 
\item $\Gamma$ is normal in $\langle A_*\rangle$, and 
\item $\Gamma$ has index at most $2^{\aleph_0}$ in $\langle A_*\rangle$.
\end{enumerate}
\end{enumerate}
\end{corollary}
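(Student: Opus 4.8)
The plan is to produce the desired subgroup as the \emph{normal core in $\Sigma_*$} of the subgroup supplied by Lemma~\ref{lem:Gamma}, where $\Sigma_*=\langle A_*A_*\inv\rangle$ in case $(a)$ and $\Sigma_*=\langle A_*\rangle$ in case $(b)$; the two cases run in parallel, and we write $W_*$ for $(A_*A_*\inv)^2$ in case $(a)$ and for $(A_*A_*\inv)^2\cap A_*^2A_*^{\nv2}\cap(A_*\inv A_*)^2\cap A_*^{\nv2}A_*^2$ in case $(b)$. Lemma~\ref{lem:Gamma} provides a countably type-definable subgroup $\Gamma\seq W_*$ with $[\Sigma_*:\Gamma]\leq 2^{\aleph_0}$; from its proof, $\Gamma=\bigcap_{n\in\N}X_n$ for a decreasing sequence of $\emptyset$-definable symmetric sets $X_n$. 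Set $N=\bigcap_{g\in\Sigma_*}g\Gamma g\inv$. Then $N$ is a subgroup of $\Sigma_*$; it is normal in $\Sigma_*$ since conjugation by any element merely permutes the family $\{g\Gamma g\inv:g\in\Sigma_*\}$; and $N\seq\Gamma\seq W_*$ (take $g=1$). This gives conditions $(i)$ and $(ii)$ in both parts, and what remains is to see that $N$ is still countably type-definable and has index at most $2^{\aleph_0}$ in $\Sigma_*$.

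For countable type-definability, the point is that $\Sigma_*$ is a countable \emph{union} of definable sets: with $V_*=A_*A_*\inv$ in case $(a)$ and $V_*=\bar{A}_*$ in case $(b)$, one has $\Sigma_*=\bigcup_{m\in\N}V_*^m$ and each $V_*^m$ is definable. Hence, for $g\in G_*$, we have $g\in N$ iff $h\inv gh\in\Gamma$ for all $h\in\Sigma_*$, iff $h\inv gh\in X_n$ for all $m,n\in\N$ and all $h\in V_*^m$, so
\[
N=\bigcap_{m,n\in\N}Z_{m,n},\qquad Z_{m,n}:=\{g\in G_*: h\inv gh\in X_n\text{ for every }h\in V_*^m\},
\]
and each $Z_{m,n}$ is definable (a universal quantifier over the definable set $V_*^m$ applied to the definable condition $h\inv gh\in X_n$). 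Thus $N$ is countably type-definable. For the index, the number of distinct conjugates of $\Gamma$ in $\Sigma_*$ is $[\Sigma_*:N_{\Sigma_*}(\Gamma)]\leq[\Sigma_*:\Gamma]\leq 2^{\aleph_0}$, so $N$ is an intersection of at most $2^{\aleph_0}$ subgroups of index at most $2^{\aleph_0}$ and hence, as $G_*$ is sufficiently saturated, has bounded index in $\Sigma_*$. Being normal, countably type-definable, and of bounded index, $N$ then has index at most $2^{\aleph_0}$: its coset space $\Sigma_*/N$, in the logic topology, is Hausdorff and (since $N$ is countably type-definable) second countable, hence of cardinality at most $2^{\aleph_0}$.

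The main difficulty is entirely this bookkeeping of cardinal invariants, rather than any group theory. The naive normal core of $\Gamma$ is a priori defined by $2^{\aleph_0}$-many conjugate conditions and carries only the crude index bound $2^{2^{\aleph_0}}$; the content is to re-express $N$ as a genuinely \emph{countable} intersection of definable sets, which is exactly what the boundedly-many-step generation of $\Sigma_*$ by the single definable set $V_*$ buys us, after which the step from ``bounded index'' to ``index $\leq 2^{\aleph_0}$'' is standard. One should also note a minor mismatch: the logic-topology facts recalled in Section~\ref{sec:G*} are stated for a \emph{definable} ambient group, while $\Sigma_*$ is only a countable increasing union of definable sets, but this affects none of the arguments used here.
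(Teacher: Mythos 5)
Your proof is correct, and it departs from the paper's in both of the two nontrivial steps (the normal-core construction and the containment $N\seq\Gamma\seq W_*$ are the same in both).

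For countable type-definability, you re-express the core directly as $N=\bigcap_{m,n}Z_{m,n}$, where $Z_{m,n}=\{g: h\inv gh\in X_n \text{ for all } h\in V_*^m\}$ is $\emptyset$-definable, exploiting $\Sigma_*=\bigcup_m V_*^m$. The paper instead argues via automorphism invariance: it picks a countable sublanguage $\cL_0$ over which $\Gamma$ is type-definable over $\emptyset$, notes $\sigma(\Gamma)=\Gamma$ for all $\sigma\in\Aut_{\cL_0}(G_*)$, and concludes the core is $\cL_0$-type-definable over $\emptyset$. Your version is more explicit and self-contained; it also hands you, for free, a chain of definable sets (the diagonal intersections $\tilde Z_i=\bigcap_{m,n\le i}Z_{m,n}$, which one checks are symmetric with $\tilde Z_{i+1}^2\seq\tilde Z_i$) that could then feed into an entirely direct index count.

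For the index bound, however, you diverge more substantially: after the crude bound $[\Sigma_*:N]\le 2^{2^{\aleph_0}}$, you invoke the logic topology, second countability for countably type-definable $N$, and the resulting cardinality bound $|\Sigma_*/N|\le 2^{\aleph_0}$. The paper avoids the logic topology entirely here and runs a direct covering argument: writing $N=\bigcap_n D_n$, it uses boundedness of $[\Sigma_*:N]$ and saturation to cover each $V^m$ by finitely many translates of $D_n$, hence $\Sigma_*$ by countably many, and the nested-squares structure of the $D_n$'s to conclude the index is at most $2^{\aleph_0}$. Your route is fine, but note that the mismatch you flag at the end is a bit more than cosmetic: the logic-topology statements in Section~\ref{sec:G*} are quoted for a \emph{definable} ambient $\Sigma$ (and, in the paper's later application in Theorem~\ref{thm:gen2}, the bounded-generation hypothesis makes $\Sigma(A_*)$ genuinely definable, so the paper never needs the $\bigvee$-definable version). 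In the setting of Corollary~\ref{cor:Gamma}, $\Sigma_*$ is only $\bigvee$-definable, so you are importing the standard but unstated extension of these facts to $\bigvee$-definable groups (as in Hrushovski's and Pillay's work). That is legitimate, but if you want to stay inside what the paper actually proves, the cleaner move is to replace this last step with the paper's direct saturation/covering argument applied to your $\tilde Z_i$'s, which sidesteps the logic topology altogether.
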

\begin{proof}
Set
\[
(V,W,\Sigma)=\begin{cases}
A_* A_*\inv,(A_* A_*\inv)^2,\langle A_* A_*\inv\rangle) & \text{in case $(a)$}\\
(\bar{A}_*,(A_*A_*\inv)^2\cap A_*^2A_*^{\nv 2}\cap (A_*\inv A_*)^2\cap A_*^{\nv 2}A_*^2,\langle A_*\rangle) & \text{in case $(b)$.}
\end{cases}
\]
By Lemma \ref{lem:Gamma}, we have a countably type-definable subgroup $\Gamma_0\leq G_*$ such that $\Gamma_0\seq W$ and $\Gamma_0$ has index at most $2^{\aleph_0}$ in $\Sigma$. Let $\Gamma=\bigcap_{g\in\Sigma}g\Gamma_0g\inv$. Then $\Gamma$ is an intersection of at most $2^{\aleph_0}$ conjugates $g\Gamma_0g\inv$ with $g\in\Sigma$. So $\Gamma$ is a type-definable subgroup of $G_*$, which is normal in $\Sigma$ and has index at most $2^{2^{\aleph_0}}$ in $\Sigma$. 

We now show that $\Gamma$ is countably type-definable of index at most $2^{\aleph_0}$ in $\Sigma$. For the first part, let $\cL_0\seq\cL$ be a countable language containing the language of groups, and unary predicates defining $A$ and $Y_n$ for $n\in\N$, where $Y_n$ are the predicates used to obtain $\Gamma_0$ (via the proof of Lemma \ref{lem:Gamma}). Then $\Gamma$ is $\cL_0$-type-definable. Moreover,   $\Gamma_0$ is $\cL_0$-type-definable over $\emptyset$ and, so $\sigma(\Gamma_0)=\Gamma_0$ for any $\sigma\in \Aut_{\cL_0}(G_*)$. Since $\Sigma$ is $\Aut_{\cL_0}(G_*)$-invariant, $\sigma(\Gamma)=\Gamma$ for any $\sigma\in\Aut_{\cL_0}(G_*)$, and so $\Gamma$ is $\cL_0$-type-definable over $\emptyset$. Since $\cL_0$ is countable, $\Gamma$ is countably type-definable. 

Finally, let $\Gamma=\bigcap_{n=0}^\infty D_n$, where each $D_n$ is definable and (without loss of generality) contained in $\Sigma$. Since $\Gamma$ has bounded index in $\Sigma$, we may fix some bounded set $C\subset\Sigma$ such that $\Sigma =C\Gamma$. Fix $m,n\in\N$. Then  $V^m\seq\Sigma= C\Gamma=CD_n$. By saturation of $G_*$, it follows that there is some finite $C_{n,m}\seq C$ such that $V^m\seq C_{n,m}D_n$. So, if $C_n=\bigcup_{m\in\N}C_{n,m}$, then $C_n$ is countable and $\Sigma = C_nD_n$. Once again, this implies that $\Gamma$ has index at most $2^{\aleph_0}$ in $\Sigma$.
\end{proof}

Corollary \ref{cor:Gamma} is  a nonstandard Bogolyubov-Ruzsa-type statement about pseudofinite sets of small alternation or small tripling. However, since the subgroup $\Gamma$ is not necessarily definable, it cannot be directly transferred to statements about internal subsets of $G$ (which are needed in order to transfer to the finite groups $G_s$). For this, we need the material in Section \ref{sec:G*} on approximate Bohr neighborhoods.

\subsection{Ultraproduct argument}

We now prove parts $(1)$ and $(2)$ of Theorem \ref{thm:maingen} simultaneously.  Given a group $G$ and a set $A\seq G$, let 
\[
\Sigma(A) =\begin{cases}
\langle AA\inv\rangle \\
\langle A\rangle 
\end{cases}
U(A) =\begin{cases}
AA\inv A \\
A^3 
\end{cases}
V(A) =\begin{cases}
AA\inv & \text{\hspace{10pt}in part $(1)$}\\
\bar{A} & \text{\hspace{10pt}in part $(2)$,}
\end{cases}
\]
\[
\text{and }W(A) =\begin{cases}
(AA\inv)^2 & \text{in part $(1)$}\\
(AA\inv)^2\cap A^2A^{\nv 2}\cap (A\inv A)^2\cap A^{\nv 2}A^2 & \text{in  part $(2)$.}
\end{cases}
\]
The ambient group $G$ is supressed from the notation, but this should cause no confusion in the following proof.

The next result is a restatement of Theorem \ref{thm:maingen}, which we will prove by taking an ultraproduct of counterexamples, and using the material in Section \ref{sec:G*} in order to transfer Bohr neighborhoods through ultraproducts and saturated extensions. 

\begin{theorem}\label{thm:gen2}
For any positive integers $k$ and $m$, there is an integer $s=s(k,m)$ such that the following holds. Suppose $G$ is a group and $A\seq G$ is finite such that $|U(A)|\leq k|A|$ and $\Sigma(A)=V(A)^m$. Then there are:
\begin{enumerate}[\hspace{5pt}$\ast$]
\item a normal subgroup $H\leq \Sigma(A)$, of index at most $s$, and  
\item a $(\delta,n)$-Bohr neighborhood $B$ in $H$, with $\delta\inv,n\leq s$,
\end{enumerate}
such that $B\seq W(A)$. Moreover, if $\Sigma(A)$ is abelian then we may assume $H=\Sigma(A)$.
\end{theorem}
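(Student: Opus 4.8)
The plan is to prove Theorem~\ref{thm:gen2} by the ``ultraproduct of counterexamples'' method already advertised in the introduction. Suppose the statement fails for some fixed $k$ and $m$; then for every $s\in\N$ there is a group $G_s$ and a finite set $A_s\seq G_s$ with $|U(A_s)|\leq k|A_s|$ and $\Sigma(A_s)=V(A_s)^m$, but for which no normal subgroup $H\leq\Sigma(A_s)$ of index at most $s$ admits a $(\delta,n)$-Bohr neighborhood $B\seq W(A_s)$ with $\delta\inv,n\leq s$ (and, in the abelian subcase, not even $H=\Sigma(A_s)$). Form $G=\prod_\cU G_s$ and $A=\prod_\cU A_s$ as in Section~\ref{sec:G}; then $A$ is pseudofinite, $\mu(U(A))\leq k<\infty$, and $\Sigma(A)=V(A)^m$ by {\L}o\'s's theorem. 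Passing to the saturated extension $G_*$ of Section~\ref{sec:G*}, we have $\mu(U(A_*))<\infty$ (in the appropriate case $(a)$ or $(b)$), so Corollary~\ref{cor:Gamma} supplies a countably type-definable subgroup $\Gamma\leq G_*$ which is normal in $\Sigma_*:=\Sigma(A_*)$, has index at most $2^{\aleph_0}$, and satisfies $\Gamma\seq W_*:=W(A_*)$. Note $\Sigma_*$ is $G$-definable (it equals $V(A_*)^m$) and pseudofinite, and $\Gamma$ has bounded index; this is exactly the setup for Proposition~\ref{prop:Bohr}.

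Next I would run Proposition~\ref{prop:Bohr} to get a decreasing sequence $(X_i)$ of definable subsets of $\Sigma_*$ with $\Gamma=\bigcap_i X_i$, together with definable finite-index normal subgroups $H_i\leq\Sigma_*$ and definable homomorphisms $\pi_i\colon H_i\to\T^{n_i}$ such that $\Gamma\seq\ker\pi_i\seq X_i\seq H_i$ (with $H_i=\Sigma_*$ in the abelian case). Now fix $i$. Apply Lemma~\ref{lem:Bapprox} with $t=3$ to obtain a definable $(3,\pi_i)$-approximate Bohr chain $(Y_j)_{j\geq 0}$ in $H_i$, so $\bigcap_j Y_j=\ker\pi_i$ and each $Y_j$ is a definable $\delta_j$-approximate $(3\delta_j,n_i)$-Bohr neighborhood in $H_i$, with $\delta_j\downarrow 0$. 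Since $\ker\pi_i\seq X_i$ and $X_i$ is definable while $\ker\pi_i=\bigcap_j Y_j$ is type-definable, compactness (saturation of $G_*$) gives some $j$ with $Y_j\seq X_i\seq W_*$. Choosing $i$ large enough that also $[\Sigma_*:H_i]$ and $n_i$ are controlled — here I would want the index bound from the structure: actually $[\Sigma_*:H_i]$ need not be bounded uniformly, so instead fix $i=0$ (or any single $i$) and set $H=H_0$, $n=n_0$, $\pi=\pi_0$. Then choose $\delta>0$ with $\delta<\theta$ (the constant of Proposition~\ref{prop:findBohr}) and $\delta\leq\delta_j$ where $Y_j\seq W_*$; shrinking the approximate Bohr neighborhood we may assume it is a $\delta$-approximate $(3\delta,n)$-Bohr neighborhood in $H$ contained in $W_*$. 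All of these data — $H$, $\pi$, the $\delta$-homomorphism $f$ with finite image defining $Y_j$ — are definable over $G$, hence descend to definable (internal) objects in $G$ itself.

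Now transfer down to the groups $G_s$. Each of $\Sigma(A_s)$, the finite-index normal subgroup $H_s\leq\Sigma(A_s)$, the $\delta$-homomorphism $f_s\colon H_s\to\T^n$ with finite image, the resulting $\delta$-approximate $(3\delta,n)$-Bohr neighborhood $Y_s=\{x\in H_s: d_n(f_s(x),0)<3\delta\}$, and the inclusion $Y_s\seq W(A_s)$ hold for $\cU$-almost all $s$, by {\L}o\'s. For such $s$, Proposition~\ref{prop:findBohr} (applicable since $H_s$ is finite and $0<\delta<\theta$) yields a genuine $(\delta,n)$-Bohr neighborhood $B_s\seq Y_s\seq W(A_s)$ in $H_s$. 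Taking $\delta\inv,n$ and the index $[\Sigma(A_s):H_s]$ all bounded by some finite $s_0$ for $\cU$-many $s$, we contradict the assumed failure of the theorem for any such $s>s_0$; hence the theorem holds with $s(k,m)=s_0$. For the abelian addendum: if $\Sigma(A_s)$ is abelian on a $\cU$-large set then $\Sigma_*$ is abelian, so Proposition~\ref{prop:Bohr} gives $H_i=\Sigma_*$ and the whole argument yields $H_s=\Sigma(A_s)$.

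\textbf{Main obstacle.} The delicate point is the bookkeeping around the index $[\Sigma_*:H_i]$ and the Bohr parameters: Proposition~\ref{prop:Bohr} produces $H_i$ and $n_i$ depending on $i$, and one must commit to a single $i$ early enough that everything is definable over $G$ and transfers, while still landing inside $W_*$. The resolution is that we only ever need \emph{one} $i$ (say the first), because the conclusion of Theorem~\ref{thm:gen2} only asks for \emph{some} bounded-index $H$ and \emph{some} bounded-complexity Bohr neighborhood inside $W(A)$ — there is no requirement that $H$ be as large as possible. Once $i$ is fixed, $[\Sigma_*:H_i]$, $n_i$ are honest finite numbers, and the only genuinely infinitary step (passing from $\ker\pi_i$ to a definable approximate Bohr neighborhood squeezed inside the definable set $X_i$) is handled by compactness. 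The second subtlety — that Proposition~\ref{prop:findBohr} needs a \emph{finite} ambient group — is exactly why we do the descent to the $G_s$ before applying it, rather than trying to work in $G_*$ or $G$; and the reason the descent is legitimate is that every object involved ($H$, $f$, $Y$, the inclusion into $W$) is definable over $G$ with the constants $\delta,n,t$ being genuine integers/reals independent of the index $s$.
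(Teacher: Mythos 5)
Your overall strategy is exactly the paper's: ultraproduct of counterexamples, pass to the saturated extension $G_*$, produce a countably type-definable bounded-index normal $\Gamma\leq\Sigma(A_*)$ inside $W(A_*)$ (you correctly invoke Corollary~\ref{cor:Gamma} rather than Lemma~\ref{lem:Gamma}, which is actually the more careful citation since Proposition~\ref{prop:Bohr} requires $\Gamma$ to be normal), then Proposition~\ref{prop:Bohr}, Lemma~\ref{lem:Bapprox}, descent via {\L}o\'s/elementarity, and Proposition~\ref{prop:findBohr} at the bottom.

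There is, however, a genuine gap in how you choose the index $i$. Your chain $Y_j\seq X_i\seq W_*$ needs the containment $X_i\seq W_*$, but Proposition~\ref{prop:Bohr} only gives $\Gamma=\bigcap_{i}X_i\seq W_*$ with the $X_i$ decreasing; for a fixed small $i$ (in particular $i=0$, which you propose) there is no reason $X_i\seq W_*$, and hence no reason $\ker\pi_i\seq W_*$, so no tail of the Bohr chain need land inside $W_*$. The correct move is to use saturation once more at this point: since $W(A_*)$ is definable and $\bigcap_i X_i\seq W(A_*)$, there is some $i_0$ with $X_{i_0}\seq W(A_*)$; fix $H=H_{i_0}$, $\pi=\pi_{i_0}$. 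Your motivation for retreating to $i=0$ --- the worry that $[\Sigma_*:H_i]$ is ``not bounded uniformly'' over $i$ --- is a red herring: no uniformity over $i$ is needed. Once the ultraproduct of counterexamples is fixed, picking the single index $i_0$ produces honest finite values $r=[\Sigma_*:H_{i_0}]$, $n=n_{i_0}$, and $\delta$, and the contradiction is obtained simply by choosing $s\in I$ with $s>\max\{r,n,\delta^{-1}\}$. Your ``Main obstacle'' paragraph correctly articulates this last point, but the actual choice ``$i=0$'' in the body of the argument contradicts it and leaves the step $Y_j\seq W_*$ unjustified.
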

\begin{proof}
Suppose not. Then for any $s\in\N$, we may fix a group $G_s$ and a finite set $A_s\seq G_s$ such that $|U(A_s)|\leq k|A_s|$, $\Sigma(A_s)=V(A_s)^m$, and there does not exist a normal subgroup $H\leq \Sigma(A_s)$ and a $(\delta,n)$-Bohr neighborhood $B$ in $H$ such that $[\Sigma(A_s):H],\delta\inv,n\leq s$ and $H\seq W(A_s)$. Note that $|\Sigma(A_s)|>\max\{s,m\}$, since otherwise we could take $H=\{1\}$.

Let $\cU$ be a nonprincipal ultrafilter on $\N$ and set $G=\prod_{\cU}G_s$. Let $A=\prod_{\cU}A_s$, and note that $A$ is an internal subset of $G$. We also have $U(A)=\prod_{\cU}U(A_s)$, $V(A)=\prod_{\cU}V(A_s)$, $W(A)=\prod_{\cU}W(A_s)$, and
\[
\Sigma(A)=\bigcup_{n\in\N}V(A)^n=\bigcup_{n\in\N}\prod_{\cU}V(A_s)^n=\prod_{\cU}V(A_s)^m= V(A)^m
\]
Note, in particular, that $\Sigma(A)=\prod_{\cU}\Sigma(A_s)$ is infinite, and so $G$ is infinite. Let $\mu$ be the $|A|$-normalized pseudofinite counting measure on internal subsets of $G$. By {\L}o{\'s}'s Theorem, $\mu(U(A))\leq k<\infty$.

Let $G_*$ be a saturated elementary extension of $G$ in the internal language of $G$ (see Section \ref{sec:G*}), and let $A_*$ be the interpretation in $G_*$ of the predicate defining $A$ in $G$. By Lemma \ref{lem:Gamma}, there is a countably type-definable subgroup $\Gamma\leq G_*$ such that $\Gamma\seq W(A_*)$ and $\Gamma$ has index at most $2^{\aleph_0}$ in $\Sigma(A_*)$. Note also that $\Sigma(A_*)=V(A_*)^m$. In particular, $\Sigma(A_*)$ is $G$-definable and pseudofinite. 

By Proposition \ref{prop:Bohr} and saturation of $G_*$, there is a definable finite-index normal subgroup $H\leq\Sigma(A_*)$ and a definable homomorphism $\pi\colon H\to\T^n$, for some $n\in\N$,  such that $\Gamma\seq\ker\pi\seq H\cap W(A_*)$. By Lemma \ref{lem:Bapprox}, there is a definable $(3,\pi)$-approximate Bohr chain $(Y_i)_{i=0}^\infty$ in $H$. By saturation, $Y_i\seq W(A_*)$ for sufficiently large $i\in\N$.  So we may fix $\delta<\theta$, where $\theta$ is as in Proposition \ref{prop:findBohr}, and a definable $\delta$-homomorphism $f\colon H\to \T^n$ such that $Y:=\{x\in H:d(f(x),0)<3\delta\}\seq W(A^*)$.

 Let $\Lambda=f(H)$, and note that $\Lambda$ is finite (see Definition \ref{def:approx}$(3)$). Given $\lambda\in\Lambda$, let $F(\lambda)=f\inv(\lambda)\seq H$. Then each $F(\lambda)$ is definable. Set $r=[\Sigma(A_*):H]<\infty$. 

Fix $\cL$-formulas $\phi(x;\ybar)$, $\psi(x;\zbar)$, and $\zeta_\lambda(x;\ubar_\lambda)$ for $\lambda\in \Lambda$, such that $H$ is defined by an instance of $\phi(x;\ybar)$, $Y$ is defined by an instance of $\psi(x;\zbar)$, and, for $\lambda\in\Lambda$, $F(\lambda)$ is defined by an instance of $\zeta_\lambda(x;\ubar_\lambda)$. Let $I\seq\N$ be the set of $s\in\N$ such that, for some tuples  $\abar_s$, $\bbar_s$, and $\cbar_{\lambda,s}$ (for $\lambda\in\Lambda$) from $G_s$, we have:
\begin{enumerate}[$(i)$]
\item $\phi(x;\abar_s)$ defines a normal subgroup $H_s$ of $\Sigma(A_s)=V(A_s)^m$ of index $r$,
\item for all $\lambda\in\Lambda$, $\zeta_\lambda(x;\cbar_{\lambda,s})$ defines a subset $F_s(\lambda)$ of $H_s$,
\item if $f_s\colon H_s\to \Lambda$ is defined so that $f_s(x)=\lambda$ if and only if $x\in F_s(\lambda)$, then $f_s$ is a well-defined $\delta$-homomorphism (from $H_s$ to $\T^n$),
\item $\psi(x;\bbar_s)$ defines a subset $Y_s$ of $H_s$, and $Y_s=\{x\in H_s:d(f_s(x),0)<3\delta\}$,
\item $Y_s\seq W(A_s)$.
\end{enumerate}
Then $I\in\cU$ by {\L}o\'{s}'s Theorem and elementarity (checking that $(i)$ through $(v)$ are first-order expressible is somewhat cumbersome, but fairly routine; see the proof of \cite[Lemma 5.6]{CPTNIP}). So we may fix some $s\in I$ such that $r,n,\delta\inv\leq s$. For this $s$, $Y_s$ is a $\delta$-approximate $(3\delta,n)$-Bohr set in $H_s$. By Proposition \ref{prop:findBohr}, there is a $(\delta,n)$-Bohr set $B\seq Y_s$. So $B\seq W(A_s)$, which contradicts the choice of $G_s$ and $A_s$. 

Finally, if we assume $\Sigma(A)$ is abelian then, in the above proof, we may take $H=\Sigma(A_*)$ by Proposition \ref{prop:Bohr}, and thus assume $H_s=\Sigma(A_s)$ for all $s\in\N$.
\end{proof}

\begin{remark}\label{rem:abelianBohr}
Suppose that in Theorem \ref{thm:gen2} we further assume  $G$ is abelian and $|A|\geq c|G|$ for some fixed $c>0$. Then we have $\langle A\rangle=\bar{A}^m$, where $m\leq \lceil3c\inv+1\rceil$), and $[G:\langle A\rangle]\leq \lceil c\inv\rceil$. Therefore, in the proof of the theorem, $\Sigma(A_*)$ has finite index in $G_*$, and so $\Gamma$ has index at most $2^{\aleph_0}$ in $G_*$. So we can carry out the rest of the proof with $G_*$ in place of $\Sigma(A_*)$, obtaining $H_s=G_s$ in the conclusion. Consequently, in Theorem \ref{thm:Bogogen}$(b)$, if $G$ is abelian then we may take $H=G$. 
\end{remark}

\section{Arithmetic regularity and VC-dimension}\label{sec:NIP}

The goal of this section is to prove Theorem \ref{thm:NIPregexp}. As indicated in the introduction, the only ingredient in the work of Alon, Fox, and Zhao \cite{AFZ} requiring abelian groups is Theorem \ref{thm:Bogo}$(a)$. The (qualitative) nonabelian version of this result for sets of small \emph{tripling}, provided by Corollary \ref{cor:sym}$(a)$, will be sufficient to essentially carry out the same proof as in \cite{AFZ} (see also Remark \ref{rem:Tao}). The only extra work is in specifying the numerics and clarifying the ``regularity" aspect the result (i.e. condition $(ii)$ of Theorem \ref{thm:NIPregexp}). We also make some similar (but mostly qualitative) statements for purely nonabelian finite groups, and finite simple groups. 

\begin{definition}
Let $G$ be a finite group. Given a subset $A\seq G$ and some $\epsilon>0$, define the \textbf{$\epsilon$-stabilizer of $A$} to be the set $\Stab_\epsilon(A):=\{x\in G:|xA\smd A|\leq\epsilon|G|\}$.
\end{definition}

The following lemma, which we have extracted  from the counting techniques done in \cite{AFZ}, makes explicit the connection between $\epsilon$-stabilizers and strong arithmetic regularity involving subgroups.

\begin{lemma}\label{lem:separate}
Let $G$ be a finite group and fix a subset $A\seq G$ and some $\epsilon>0$. Suppose $H$ is a subgroup of $G$ contained in $\Stab_\epsilon(A)$. 
\begin{enumerate}[$(a)$]
\item There is $D\seq G$, which is a union of right cosets of $H$, such that $|A\smd D|\leq \epsilon|G|$.
\item There is $Z\seq G$, with $|Z|<\frac{1}{2}\epsilon^{1/2}|G|$, such that for any $x\in G\backslash Z$, either $|Hx\cap A|\leq\epsilon^{1/4}|H|$ or $|Hx\backslash A|\leq\epsilon^{1/4}|H|$.
\end{enumerate}
\end{lemma}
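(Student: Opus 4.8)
The statement is essentially a pair of averaging/double-counting arguments, exactly in the spirit of the quantitative computations underlying the Alon--Fox--Zhao proof, so my plan is to prove each part by a direct counting estimate, using only the hypothesis $H\subseteq\Stab_\epsilon(A)$ and the fact that $H$ is a subgroup (so that $G$ partitions into right cosets $Hx$, each of size $|H|$).

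For part $(a)$, the natural choice of $D$ is the union of those right cosets $Hx$ on which $A$ has density at least $1/2$, i.e. $D=\bigcup\{Hx : |Hx\cap A|\geq \tfrac12|H|\}$, which is clearly a union of right cosets of $H$. To estimate $|A\smd D|$, I would fix a coset $Hx$ and bound the ``local error'' $|(A\smd D)\cap Hx|$, which equals $\min\{|Hx\cap A|,|Hx\setminus A|\}$ by the definition of $D$. The key point is that for each $h\in H$, since $h\in\Stab_\epsilon(A)$ we have $|hA\smd A|\leq\epsilon|G|$, and intersecting with $Hx$ (which is $h$-invariant since $h(Hx)=Hx$) gives $|(hA\smd A)\cap Hx|$; averaging over all $h\in H$ and over all cosets recovers $\epsilon|G|$ globally, while on a single coset one shows that the local density of $A$ cannot be close to $1/2$ without forcing $\sum_{h\in H}|(hA\triangle A)\cap Hx|$ to be large. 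Concretely I expect: for each coset $Hx$, $2\,|Hx\cap A|\,|Hx\setminus A| \le |H|\sum_{h\in H}\frac{|(hA\triangle A)\cap Hx|}{|H|}$ (counting pairs $(a,b)\in (A\cap Hx)\times(Hx\setminus A)$ via $h=ba^{-1}\in H$), which after summing over cosets and using $\sum_x \sum_h |(hA\triangle A)\cap Hx| = \sum_h |hA\triangle A| \le |H|\epsilon|G|$ yields $\sum_x \min\{|Hx\cap A|,|Hx\setminus A|\}\le \epsilon|G|$, i.e. $|A\smd D|\le\epsilon|G|$.

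For part $(b)$, I would take $Z$ to be the union of the ``bad'' cosets $Hx$ — those on which both $|Hx\cap A|>\epsilon^{1/4}|H|$ and $|Hx\setminus A|>\epsilon^{1/4}|H|$. On each such coset the product $|Hx\cap A|\,|Hx\setminus A|$ exceeds $\epsilon^{1/4}|H|\cdot\epsilon^{1/4}|H| = \epsilon^{1/2}|H|^2$, so by the pair-counting inequality from part $(a)$ the coset contributes more than $2\epsilon^{1/2}|H|^2/|H| = 2\epsilon^{1/2}|H|$... let me instead run it as: each bad coset forces $\sum_{h\in H}|(hA\triangle A)\cap Hx| \ge 2|Hx\cap A|\,|Hx\setminus A|/|H| > 2\epsilon^{1/2}|H|$; summing over bad cosets and comparing with the global bound $\sum_{\text{all }x}\sum_h |(hA\triangle A)\cap Hx| \le |H|\epsilon|G|$ gives (number of bad cosets)$\cdot 2\epsilon^{1/2}|H| < \epsilon|G|$, hence $|Z| = (\text{number of bad cosets})\cdot|H| < \tfrac12\epsilon^{1/2}|G|$, which is exactly the claimed bound; and any $x\notin Z$ lies in a good coset, giving the stated dichotomy.

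\textbf{Main obstacle.} There is no deep difficulty here — the whole proof is elementary counting — but the step requiring the most care is setting up the double-counting of pairs within a fixed coset so that the subgroup structure (invariance of $Hx$ under left multiplication by $h\in H$) is used correctly, and then bookkeeping the exponents $\epsilon$, $\epsilon^{1/2}$, $\epsilon^{1/4}$ so that the constants come out to be exactly $\epsilon|G|$ in $(a)$ and $\tfrac12\epsilon^{1/2}|G|$ in $(b)$ rather than off by a factor. I would write the single inequality $2\,|Hx\cap A|\cdot|Hx\setminus A| \le |H|\cdot\#\{(a,h)\in (A\cap Hx)\times H : ha\notin A\}$ once, cleanly, and then derive both parts from it.
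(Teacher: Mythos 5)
Your approach is correct and is essentially the paper's: partition $G$ into right cosets of $H$, count on each coset the pairs in $(Hx\cap A)\times(Hx\setminus A)$, and relate the total to $\sum_{h\in H}|hA\smd A|\leq\epsilon|G||H|$ coming from $H\seq\Stab_\epsilon(A)$; the paper packages the global count as $2|P|=\sum_{x\in H}|xA\smd A|$ citing the corresponding computation in \cite{AFZ}, but the content is your double-count. Your choices of $D$ (union of cosets of density $\geq 1/2$) and $Z$ (union of cosets with both $|Hx\cap A|,|Hx\setminus A|>\epsilon^{1/4}|H|$) are also exactly the paper's. The only thing to repair when writing this up is bookkeeping: the ``single inequality'' at the end, $2|Hx\cap A|\cdot|Hx\setminus A|\leq|H|\cdot\#\{(a,h):ha\notin A\}$, carries a spurious factor of $|H|$ (since $\#\{(a,h)\in(A\cap Hx)\times H:ha\notin A\}$ is already exactly $|Hx\cap A|\cdot|Hx\setminus A|$, it is trivial); what you want, and what your bijection $(a,b)\mapsto(a,ba^{-1})$ in fact gives, is the \emph{identity} $\sum_{h\in H}|(hA\smd A)\cap Hx|=2\,|Hx\cap A|\cdot|Hx\setminus A|$. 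Similarly, in part $(b)$ you insert an extra $/|H|$ in the per-coset lower bound and then silently drop an $|H|$ when comparing to the global bound; these two slips cancel and the conclusion $|Z|<\tfrac12\epsilon^{1/2}|G|$ is right, but both steps need the $|H|$'s put back correctly.
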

\begin{proof}
Let $\cC$ be the set of right cosets of $H$ in $G$. Given $C\in \cC$, define $P_C=(C\cap A)\times (C\backslash A)$. Let $P=\bigcup_{C\in \cC}P_C=\{(a,g)\in A\times G\backslash A:ga\inv\in H\}$, and note that $P_C\cap P_{C'}=\emptyset$ for distinct $C,C'\in\cC$. From the proof of \cite[Lemma 2.4]{AFZ}, one obtains
\begin{equation*}
2\sum_{C\in\cC}|P_C|=2|P|=\sum_{x\in H}|xA\smd A|\leq\epsilon|G||H|.\tag{$\dagger$}
\end{equation*} 
For part $(a)$, we continue to follow \cite{AFZ}. Let $D=\bigcup\{C\in\cC:|C\cap A|\geq|H|/2\}$. Then, by $(\dagger)$,
\[
|A\smd D| =\sum_{C\in\cC}\min\{|C\cap A|,|C|-|C\cap A|\}\leq \sum_{C\in\cC}\frac{2}{|H|}|P_C|\leq \epsilon|G|.
\]
For part $(b)$, let $\cZ=\{C\in\cC:|P_C|>\epsilon^{1/2}|H|^2\}$. By $(\dagger)$,
\[
\textstyle\frac{1}{2}\epsilon|G||H|\geq \displaystyle \sum_{C\in\cC}|P_C|>\epsilon^{1/2}|H|^2|\cZ|.
\]
So $|\cZ|<\frac{1}{2}\epsilon^{1/2}\frac{|G|}{|H|}$. Now set $Z=\bigcup_{C\in\cZ}C$. Then $|Z|<\frac{1}{2}\epsilon^{1/2}|G|$. Moreover, if $x\in G\backslash Z$ then $Hx\not\in\cZ$, and so $|P_{Hx}|\leq \epsilon^{1/2}|H|^2$, which implies $|Hx\cap A|\leq\epsilon^{1/4}|H|$ or $|Hx\backslash A|\leq\epsilon^{1/4}|H|$.
\end{proof}

We can now prove Theorem \ref{thm:NIPregexp}, following the same steps as in \cite{AFZ}.

\begin{proof}[Proof of Theorem \ref{thm:NIPregexp}]
Fix positive integers $r$ and $d$, and real numbers $\epsilon, \nu>0$. Suppose $G$ is a finite group of exponent at most $r$, and $A\seq G$ has VC-dimension at most $d$. 
Let $S=\Stab_\delta(A)$, where $\delta=(\epsilon/4)^{(d+ \nu)/d}/30^{ \nu/d}$. Note that $S$ is symmetric.  Set $k=(30/\delta)^d$ and $p=d(d+ \nu)/ \nu$. It is an immediate consequence of \emph{Haussler's Packing Lemma} \cite{HaussPL}, for sets systems of finite VC-dimension, that   $|S|\geq|G|/k$ (see Lemmas 2.1 and 2.2 of \cite{AFZ}). Therefore, we cannot have $|S^{3^{i+1}}|>3^{p}|S^{3^i}|$ for all $i\leq \log_{3^{p}}(k)$. So we may fix some $t\leq \log_{3^{p}}(k)$ such that, setting $B=S^{3^t}$, we have  $|B^3|\leq 3^{p}|B|$. By Corollary \ref{cor:sym}$(a)$, there is a subgroup $H\leq G$ such that $H\seq B^4$ and $B$ is covered by $O_{r,d, \nu}(1)$ left translates of $H$. Since $|G|\leq k|B|$, we see that $H$ has index at most $O_{r,d, \nu}(k)=O_{r,d, \nu}((1/\epsilon)^{d+ \nu})$. 

To finish the proof, it suffices by Lemma \ref{lem:separate} to show that $H\seq\Stab_\epsilon(A)$. We have $|xA\smd A|\leq \delta |G|$ for all $x\in S$, and $H\seq B^4=S^{4\cdot 3^{t}}$. So, for any $x\in H$,  
\[
|xA\smd A|\leq 4\cdot 3^t\delta|G|\leq 4 k^{1/p}\delta|G|=\epsilon|G|.\qedhere
\]
\end{proof}

\begin{remark}\label{rem:NIPreg}
We make some comments to follow up on Remark \ref{rem:NIPpre}.
\begin{enumerate}[\hspace{0pt}$(1)$]
\item Note that, in the proof of Theorem \ref{thm:NIPregexp}, if $K=\bigcap_{g\in G}gHg\inv$ then $K$ is normal of index at most $[G:H]!$ and $K\seq \Stab_\epsilon(A)$. So, if $[G:H]\leq O_{r,d, \nu}((1/\epsilon)^{d+ \nu})$, for some chosen $\epsilon, \nu>0$, then  $\log [G:K]\leq O_{r,d, \nu}(\epsilon^{\nv(d+ \nu)}\log(\epsilon^{\nv 1}))$. Altogether, we have a statement identical to Theorem \ref{thm:NIPregexp}, but with a \emph{normal} subgroup of index $2^{O_{r,d, \nu}((1/\epsilon)^{d+ \nu})}$. One reason a normal subgroup is desirable in this situation is that it implies a very strong graph regularity conclusion for the bipartite graph $xy\in A$ on $G$, in which the pieces of the regular partition are the cosets of $H$ (see \cite[Corollary 3.3]{CPTNIP}). 
\item A non-effective version of Theorem \ref{thm:NIPregexp}, with a normal subgroup, can also be proved by applying Corollary \ref{cor:sym}$(b)$  directly to $\Stab_\epsilon(A)$. Together with Haussler's Packing Lemma, this would directly yield a normal subgroup of index $O_{r,d,\epsilon}(1)$ contained in $\Stab_\epsilon(A)$. It is interesting to note that a qualitative version of Theorem \ref{thm:NIPregexp}, with a normal subgroup, was already shown in \cite{CPTNIP} using fairly different techniques (although there are some aspects of the work in \cite{CPTNIP} which are not recovered here, including definability of the subgroup $H$ and stronger regularity statement). 
\end{enumerate}
\end{remark}

Finally, we prove similar results about purely nonabelian finite groups and finite simple groups. To motivate our interest in this setting, we recall some  of the previous work on arithmetic regularity for subsets of finite groups satisfying extra tameness properties. One example of such a property is bounded VC-dimension, which we have already discussed. Another important example is that of a \textbf{$d$-stable} subset $A$ of a group $G$, for some integer $d\geq 1$, which means there do not exist $a_1,\ldots,a_d,b_1,\ldots,b_d\in G$ such that $a_ib_j\in A$ if and only if $i\leq j$. Note that a $d$-stable set  has VC-dimension at most $d-1$.  Both of these properties were previously studied in the setting of Szemer{\'e}di regularity for graphs (see \cite{AFN}, \cite{MaShStab}).

In \cite{CPT} (joint with Pillay and Terry), we showed that, given $d\geq 1$ and $\epsilon>0$, if $G$ is a finite group and $A\seq G$ is $d$-stable then there is a normal subgroup $H\leq G$, of index $O_{d,\epsilon}(1)$, and a union $D$ of cosets of $G$ such that $|A\smd D|\leq \epsilon|H|$. Informally, stable subsets of finite groups are structurally approximated by cosets of a bounded-index normal subgroup. In the setting of finite groups, this phenomenon was first investigated by Terry and Wolf \cite{TeWo}, who proved a similar result for $G=\F_p^n$ with strong quantitative bounds, but with the approximation $|A\smd D|\leq \epsilon|G|$. (This was recently generalized to arbitrary finite abelian groups in \cite{TeWo2}.) 

In contrast, easy examples show that subgroups are not sufficient to control sets of bounded VC-dimension. For example, as noted in \cite{AFZ}, if $p\geq 3$ is prime and $G=\Z/p\Z$ and $A=\{1,\ldots,\frac{p-1}{2}\}$, then $A$ has VC-dimension $2$, but $A$ cannot be approximated by cosets of a nontrivial subgroup of $G$. This is one reason to use Bohr neighborhoods in the  formulation of arithmetic regularity for sets of bounded VC-dimension, which was done by Sisask in the abelian setting \cite{SisNIP}, and independently in \cite{CPTNIP} for general finite groups. As we have seen above, if one introduces a uniform bound on the exponent of the groups, then subgroups can be used to approximate sets of bounded VC-dimension. So this motivates the following result that purely nonabelian groups (see Corollary \ref{cor:pure}) also exhibit this behavior.

\begin{theorem}\label{thm:NIPregpna}
Fix a positive integer $d$. Suppose $G$ is a purely nonabelian finite group, and $A\seq G$ has VC-dimension at most $d$. Then, for any $\epsilon>0$, there is a normal subgroup $H$ of $G$, of index $O_{d,\epsilon}(1)$, which satisfies the following properties.
\begin{enumerate}[$(i)$]
\item \textnormal{(structure)} There is a set $D\seq G$, which is a union of cosets of $H$, such that $|A\smd D|\leq \epsilon|G|$.
\item \textnormal{(regularity)} There is a set $Z\seq G$, with $|Z|<\frac{1}{2}\epsilon^{1/2}|G|$, such that for any $x\in G\backslash Z$, either $|xH\cap A|\leq\epsilon^{1/4}|H|$ or $|xH\backslash A|\leq\epsilon^{1/4}|H|$.
\end{enumerate}
\end{theorem}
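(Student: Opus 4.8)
The plan is to mimic the proof of Theorem \ref{thm:NIPregexp}, replacing the bounded-exponent input (Corollary \ref{cor:sym}$(a)$) with the purely nonabelian input (Corollary \ref{cor:pure}). First I would fix $d$, $\epsilon>0$, a purely nonabelian finite group $G$, and $A\seq G$ of VC-dimension at most $d$. As in the proof of Theorem \ref{thm:NIPregexp}, set $S=\Stab_\delta(A)$ for a suitable $\delta=\delta(d,\epsilon)$ to be chosen (of the shape $\delta\approx(\epsilon/4)^{1+o(1)}$, with the same dependence on an auxiliary parameter $\nu$, or simply $\nu$ fixed to $d$ since we no longer need to optimize the exponent). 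Note $S$ is symmetric. By Haussler's Packing Lemma (Lemmas 2.1 and 2.2 of \cite{AFZ}), $|S|\geq|G|/k$ for $k=(30/\delta)^d$. By the same pigeonhole-on-powers argument used in Theorem \ref{thm:NIPregexp}, we cannot have $|S^{3^{i+1}}|>3^p|S^{3^i}|$ for all $i\leq\log_{3^p}(k)$, where $p=d(d+\nu)/\nu$, so there is $t\leq\log_{3^p}(k)$ with $B:=S^{3^t}$ satisfying $|B^3|\leq 3^p|B|$. Since $|B|\geq|S|\geq|G|/k$, the set $B$ has density $\geq 1/k=\Omega_{d,\epsilon}(1)$ in $G$.

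Now the key substitution: instead of feeding $B$ into Corollary \ref{cor:sym}$(a)$, I would feed it into Corollary \ref{cor:pure}. Since $G$ is purely nonabelian and $|B|\geq\alpha|G|$ with $\alpha=1/k=\Omega_{d,\epsilon}(1)$, Corollary \ref{cor:pure} gives a \emph{normal} subgroup $H\leq G$ with $[G:H]\leq O_\alpha(1)=O_{d,\epsilon}(1)$ and $H\seq (BB\inv)^2\cap\cdots\seq B^4$ (using that $B$ is symmetric, so $BB\inv=B^2$ and $(BB\inv)^2=B^4$). This is exactly the normality that was missing in the bounded-exponent case, and it comes for free here from the purely nonabelian hypothesis. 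As in Theorem \ref{thm:NIPregexp}, we then check $H\seq\Stab_\epsilon(A)$: for $x\in S$ we have $|xA\smd A|\leq\delta|G|$, and $H\seq B^4=S^{4\cdot 3^t}$, so for $x\in H$ the triangle inequality for the symmetric-difference pseudometric gives $|xA\smd A|\leq 4\cdot 3^t\delta|G|\leq 4k^{1/p}\delta|G|=\epsilon|G|$ for the appropriate choice of $\delta$ (the same computation as in the previous proof). Finally, apply Lemma \ref{lem:separate} to $H\seq\Stab_\epsilon(A)$: part $(a)$ gives the set $D$ (a union of right cosets of $H$, hence of cosets since $H$ is normal) with $|A\smd D|\leq\epsilon|G|$, establishing $(i)$; part $(b)$ gives $Z$ with $|Z|<\tfrac12\epsilon^{1/2}|G|$ with the stated dichotomy, establishing $(ii)$.

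I anticipate no real obstacle: every ingredient is already in place, and the only genuine change from the proof of Theorem \ref{thm:NIPregexp} is swapping Corollary \ref{cor:sym}$(a)$ for Corollary \ref{cor:pure}, which automatically delivers a normal subgroup and thus makes the ``union of cosets'' conclusion in $(i)$ literal rather than needing the intersection-of-conjugates fix discussed in Remark \ref{rem:NIPpre}. The one small bookkeeping point is that Lemma \ref{lem:separate} is stated for right cosets and the regularity condition $(ii)$ in the theorem statement is phrased with left cosets $xH$; since $H$ is normal, left and right cosets coincide, so there is nothing to reconcile. The index bound $O_{d,\epsilon}(1)$ is inherited directly from Corollary \ref{cor:pure} (composed with the dependence of $\alpha=1/k$ on $d$ and $\epsilon$), with no explicit quantitative control, which is consistent with the qualitative nature of the statement.
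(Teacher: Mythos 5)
Your proof is correct, but it retains unnecessary machinery from the proof of Theorem~\ref{thm:NIPregexp}. The pigeonhole-on-powers step that locates $B=S^{3^t}$ with small tripling was needed in the bounded-exponent case only because Corollary~\ref{cor:sym}$(a)$ requires a small-tripling hypothesis. Corollary~\ref{cor:pure}, by contrast, asks for nothing beyond density of the input set (together with $G$ being purely nonabelian). So once you know $|S|\geq|G|/k$ by Haussler's Packing Lemma, you may apply Corollary~\ref{cor:pure} \emph{directly} to $S=\Stab_{\epsilon/4}(A)$, obtaining a normal $H$ of index $O_{d,\epsilon}(1)$ with $H\seq S^4$; then $|xA\smd A|\leq 4\cdot(\epsilon/4)|G|=\epsilon|G|$ for $x\in H$, and Lemma~\ref{lem:separate} finishes. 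This is exactly what the paper does, and it also eliminates the vestigial auxiliary parameter $\nu$ and the delicate tuning of $\delta$, both of which were there only to control the exponent in the polynomial index bound of Theorem~\ref{thm:NIPregexp} — a consideration that is moot here since the conclusion is purely qualitative. Your route is not wrong, and the substitution of Corollary~\ref{cor:pure} for Corollary~\ref{cor:sym}$(a)$ is the right idea, but you did not exploit the fact that the new corollary has a weaker hypothesis, which is precisely what allows the argument to be streamlined.
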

\begin{proof}
Fix a purely nonabelian finite group $G$, a subset $A\seq G$ of VC-dimension at most $d$, and $\epsilon>0$. As in Theorem \ref{thm:NIPregexp}, if $S=\Stab_{\epsilon/4}(A)$ then $|S|\geq (\epsilon/120)^d|G|$. By Corollary \ref{cor:pure} there is a normal subgroup $H\leq G$, of index $O_{d,\epsilon}(1)$, such that $H\seq S^4$. So $|xA\smd A|\leq \epsilon|G|$ for any $x\in H$. Now apply Lemma \ref{lem:separate}.
\end{proof}

\begin{remark}
The previous theorem can also be deduced from \cite[Theorem 5.7]{CPTNIP}, yielding further information as discussed in Remark \ref{rem:NIPreg}$(2)$. On the other hand, the proof here seems more direct, and certainly uses a more acute application of VC-theory. (Both proofs involve identical uses of \cite{AlGlGo} and \cite{PiRCP}).  
\end{remark}

The work in \cite{CPT} on stable regularity implies that, for any $d\geq 1$ and $\epsilon>0$, if $G$ is a finite simple group of size $\Omega_{d,\epsilon}(1)$ and $A\seq G$ is $d$-stable, then $|A|\leq\epsilon|G|$ or $|A|\geq (1-\epsilon)|G|$.\footnote{This is a finitary analogue of the older fact that any definable subset of an (infinite) definably-connected stable group has measure $0$ or $1$ with respect to the unique Keisler measure.} For the abelian case (i.e. $G=\Z/p\Z$), a quantitative lower bound on $p=|G|$, in terms of $d$ and $\epsilon$, could be deduced from \cite{TeWo2}. On the other hand, the example above, which shows that subgroups are not sufficient to approximate sets of bounded VC-dimension, takes place in \emph{abelian} finite simple groups. This motivates  the following corollary of Theorem \ref{thm:NIPregpna}.

\begin{corollary}\label{cor:NIPregpna}
For any integer $d$ and any $\epsilon>0$, there is an integer $n=n(d,\epsilon)$ such that, if $G$ is a nonabelian finite simple group of size greater than $n$, and $A\seq G$ has VC-dimension at most $d$, then $|A|\leq\epsilon|G|$ or $|A|\geq (1-\epsilon)|G|$. 
\end{corollary}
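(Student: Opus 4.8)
The plan is to derive this as a direct consequence of Theorem~\ref{thm:NIPregpna}, exploiting the fact that a nonabelian finite simple group has no proper nontrivial normal subgroups. First I would fix $d$ and $\epsilon>0$, and let $N=N(d,\epsilon)$ be (a strict upper bound for) the $O_{d,\epsilon}(1)$ constant bounding the index of the normal subgroup $H$ produced by Theorem~\ref{thm:NIPregpna}. I then set $n(d,\epsilon)=N$. Now suppose $G$ is a nonabelian finite simple group with $|G|>n(d,\epsilon)$, and $A\seq G$ has VC-dimension at most $d$.

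Since nonabelian finite simple groups are purely nonabelian (as noted after Corollary~\ref{cor:pure}), Theorem~\ref{thm:NIPregpna} applies: there is a normal subgroup $H\leq G$ with $[G:H]\leq N$, together with the structure set $D$ and regularity set $Z$. By simplicity of $G$, either $H=\{1\}$ or $H=G$. The first case is impossible, because it would force $[G:H]=|G|>n(d,\epsilon)=N$, contradicting $[G:H]\leq N$. Hence $H=G$.

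With $H=G$, condition $(i)$ of Theorem~\ref{thm:NIPregpna} says $D$ is a union of cosets of $G$, i.e.\ $D=\emptyset$ or $D=G$, and in either case $|A\smd D|\leq\epsilon|G|$. If $D=\emptyset$ then $|A|=|A\smd D|\leq\epsilon|G|$; if $D=G$ then $|G|-|A|=|G\smd A|=|A\smd D|\leq\epsilon|G|$, so $|A|\geq(1-\epsilon)|G|$. This gives the dichotomy in the statement. I do not anticipate any real obstacle here: the only thing to be careful about is tracking that $n(d,\epsilon)$ is chosen larger than the (inexplicit) index constant coming from Corollary~\ref{cor:sym}$(b)$ via Corollary~\ref{cor:pure}, so that the trivial-subgroup case is genuinely excluded; everything else is bookkeeping with the symmetric difference.
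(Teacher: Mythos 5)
Your proposal is correct and is essentially the derivation the paper intends: the statement is placed as a corollary of Theorem~\ref{thm:NIPregpna}, and your argument (simplicity forces $H=G$ once $|G|$ exceeds the index bound, so $D\in\{\emptyset,G\}$ and the symmetric-difference bound gives the dichotomy) is exactly the implicit deduction. The paper additionally offers, in the remark following the corollary, an independent direct proof via Gowers's quasirandomness and Lemma~\ref{lem:separate} which gives an explicit bound $\log n(d,\epsilon)\leq O((\epsilon/90)^{-6d})$; your route inherits the inexplicit bound from Theorem~\ref{thm:NIPregpna}, but is otherwise the same.
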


\begin{remark}
Using a similar strategy, we can give a direct proof of the previous corollary, which yields $\log (n(d,\epsilon))\leq O((\epsilon/90)^{\nv 6d})$ as an explicit bound. Namely, by the work of Gowers \cite{GowQRG} discussed in Remark \ref{rem:simple}, there is some $c>0$ such that if $G$ is a nonabelian finite simple group with $\log|G|\geq c(\epsilon/90)^{\nv 6d}$, and $S\seq G$ is such that $|S|\geq (\epsilon/90)^{\nv d}|G|$, then $G=S^3$. So fix such a $G$, and suppose $A\seq G$ is of VC-dimension at most $d$.  By Haussler's Packing Lemma, and choice of $c$, we have $G=(\Stab_{\epsilon/3}(A))^3=\Stab_\epsilon(A)$. Now apply Lemma \ref{lem:separate}.\footnote{As  in Remark \ref{rem:simple}, the work in \cite{CollJCLG} implies $n(d,\epsilon)\leq (\lceil (\epsilon/90)^{\nv 3d}\rceil+1)!$ in Corollary \ref{cor:NIPregpna}.}
\end{remark}

\section{Final remarks}\label{sec:final}

\subsection{Quantitative bounds} \label{sec:explicit} An obvious question at this point is on effective bounds for Theorems \ref{thm:Bogogen}, \ref{thm:mainbdd}, and \ref{thm:maingen}. Our proof of Theorem \ref{thm:maingen} used an ultraproduct construction, and did not give explicit bounds of any kind. While ultraproducts do not appear explicitly in our proof of Theorem \ref{thm:mainbdd}, they are similarly used in previous work on approximate groups (both in \cite{BGT} and \cite{HruAG}). 

It is sometimes possible, with enough work, to reverse engineer effective bounds from arguments with ultraproducts, but these bounds are usually very bad (see, e.g., \cite[Chapter 7]{TaoH5P} for some discussion on this topic). Altogether, it seems that in order to obtain efficient bounds for the above results, one would need efficient bounds for results on approximate groups, or different proof strategy altogether. 

\subsection{Small tripling vs. approximate groups} \label{sec:Tao} For the sake of completeness, we note that weaker versions of our main results can be obtained without the revised Sanders-Croot-Sisask analysis in Section \ref{sec:CSS}. This is because of the following result of Tao, which follows from the proof of \cite[Theorem 3.9]{TaoPSE} (or see \cite[Corollary 5.2]{BGT}).

\begin{theorem}[\cite{TaoPSE}]\label{thm:Tao}
Suppose $A$ is a nonempty finite subset of a group $G$. If $|A^3|\leq k|A|$ then $\bar{A}^2$ is an $O(k^{O(1)})$-approximate group containing $A$. 
\end{theorem}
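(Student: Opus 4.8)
The statement to prove is Theorem \ref{thm:Tao}: if $A$ is a nonempty finite subset of a group $G$ with $|A^3|\leq k|A|$, then $\bar A^2$ is an $O(k^{O(1)})$-approximate group containing $A$. The plan is to follow the standard Plünnecke--Ruzsa covering argument. First I would record the trivial inclusions: $A\seq \bar A=A\cup A\inv\cup\{1\}$ and hence $A\seq \bar A^2$, and $\bar A^2$ is symmetric and contains $1$ by construction. So the only substantive point is that $\bar A^4\seq(\bar A^2)\cdot\bar A^2$ can be covered by $O(k^{O(1)})$ left translates of $\bar A^2$.

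The key step is to promote the hypothesis $|A^3|\leq k|A|$ to bounds on all short product sets built from $A$ and $A\inv$. This is exactly the content of Proposition \ref{prop:Ruz}$(a)$ (in its discrete form, as the paragraph after that proposition notes it applies to the counting measure), due to Helfgott and quoted from \cite[Lemma 3.4]{TaoPSE}: for any fixed $n$ and any signs $\epsilon_1,\ldots,\epsilon_n\in\{\nv1,1\}$ one has $|A^{\epsilon_1}\cdots A^{\epsilon_n}|\leq k^{O_n(1)}|A|$. In particular $|\bar A^n|\leq k^{O_n(1)}|A|$ for each fixed $n$, since $\bar A^n$ is a union of at most $3^n$ such products (the factor $3^n$ being absorbed into the $k^{O_n(1)}$ once $k\geq 1$, or handled trivially for $k<1$, which forces $|A|=1$ and makes everything a single coset). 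So $|\bar A^2|$ and $|\bar A^8|$, say, are both of size $k^{O(1)}|A|$, and hence comparable up to a factor $k^{O(1)}$.

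The final step is the Ruzsa covering lemma. One wants a maximal subset $\{x_1,\ldots,x_t\}\seq \bar A^4$ such that the translates $x_i\bar A^2$ are pairwise disjoint; then the $x_i\bar A^2$ all lie in $\bar A^4\bar A^2=\bar A^6$, so $t\,|\bar A^2|=t\,|x_i\bar A^2|\leq|\bar A^6|\leq k^{O(1)}|A|\leq k^{O(1)}|\bar A^2|$, giving $t\leq k^{O(1)}$; and by maximality every $x\in\bar A^4$ satisfies $x\bar A^2\cap x_i\bar A^2\neq\emptyset$ for some $i$, so $x\in x_i\bar A^2\bar A^{\nv2}=x_i\bar A^4$ — which is not quite a cover by translates of $\bar A^2$. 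The cleaner bookkeeping, which is what \cite[Corollary 5.2]{BGT} does, is to cover $\bar A^4$ by translates of $\bar A$ instead: take $\{x_i\}\seq\bar A^4$ maximal with the $x_i\bar A$ disjoint, so $t\leq|\bar A^5|/|\bar A|\leq k^{O(1)}$, and then every $x\in\bar A^4$ lies in some $x_i\bar A\bar A\inv\seq x_i\bar A^2$. This shows $\bar A^4\seq F\bar A^2$ for a set $F$ of size $k^{O(1)}$, i.e. $\bar A^2$ is a $k^{O(1)}$-approximate group, which is the desired conclusion with $O(k^{O(1)})$ as the approximation constant.

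I do not anticipate a genuine obstacle here: the whole argument is a textbook application of the Plünnecke--Ruzsa machinery, and the two inputs (Proposition \ref{prop:Ruz}$(a)$ and Ruzsa covering) are either available in the excerpt or entirely standard. The only mild care needed is in tracking the exponent of $k$ through the chain of product-set estimates and through the covering lemma so that the final constant is genuinely of the form $O(k^{O(1)})$, and in handling the degenerate regimes ($k<1$, or $A$ a single point) where everything collapses to a one-coset cover.
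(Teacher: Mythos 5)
Your argument is correct, and it is worth pointing out that the paper does not actually prove Theorem~\ref{thm:Tao}: it is cited from \cite{TaoPSE} (with \cite[Corollary 5.2]{BGT} as an alternative reference), so there is no in-paper proof to compare against. What you have written is the standard argument behind those citations: Proposition~\ref{prop:Ruz}$(a)$ (in its discrete form) controls $|\bar A^n|$ by $k^{O_n(1)}|A|$ for each fixed $n$, and then the Ruzsa covering lemma applied with the small set $\bar A$ (not $\bar A^2$, as you correctly note) covers $\bar A^4$ by $|\bar A^5|/|\bar A|\leq k^{O(1)}$ left translates of $\bar A\bar A^{-1}\seq\bar A^2$. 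The symmetry of $\bar A^2$ and the inclusion $A\seq\bar A^2$ are immediate, so this does establish that $\bar A^2$ is an $O(k^{O(1)})$-approximate group. One small moot point: the degenerate case $k<1$ never arises, since $|A^3|\geq|A|$ for any nonempty $A$ (so the hypothesis forces $k\geq 1$); your aside about it is harmless but unnecessary.
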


Together with Theorem \ref{thm:BGTbdd}, one obtains a weaker version of Theorem \ref{thm:mainbdd}.

\begin{corollary}[\cite{BGT}, \cite{TaoPSE}]\label{cor:Tao}
Fix positive integers $k$ and $r$. Let $G$ be a group of exponent $r$, and fix a finite subset $A\seq G$. Suppose $|A^3|\leq k|A|$. Then there is $H\leq\langle A\rangle$ such that $\bar{A}^2$ is covered by $O_{k,r}(1)$ left cosets of $H$ and $H\leq \bar{A}^8$. 
\end{corollary}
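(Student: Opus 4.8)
The plan is to derive Corollary \ref{cor:Tao} by simply composing Theorem \ref{thm:Tao} with Theorem \ref{thm:BGTbdd}. First I would observe that $\bar{A}^2$ is symmetric and, by Theorem \ref{thm:Tao}, is an $O(k^{O(1)})$-approximate group containing $A$. Since $G$ has exponent $r$, so does the subgroup $\langle A\rangle$; note that $\bar{A}^2\seq\langle A\rangle$, so we may work inside $\langle A\rangle$ throughout. Applying Theorem \ref{thm:BGTbdd} to the approximate group $\bar{A}^2$ (with approximation constant $O(k^{O(1)})$ and exponent $r$), we obtain a subgroup $H\leq\langle A\rangle$ with $H\seq(\bar{A}^2)^4=\bar{A}^8$ such that $\bar{A}^2$ is covered by $O_{k,r}(1)$ left cosets of $H$.

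The only bookkeeping point is tracking how the constants compose: Theorem \ref{thm:Tao} produces an approximation constant of the form $O(k^{O(1)})$, which depends only on $k$, and feeding this into the $O_{k,r}(1)$-type bound of Theorem \ref{thm:BGTbdd} yields a covering bound depending only on $k$ and $r$, as claimed. No genuine obstacle arises here; the corollary is a direct concatenation of two quoted results, and the main (minor) point to be careful about is that the exponent hypothesis transfers to $\langle A\rangle$ and that $\bar{A}^2$ lands inside $\langle A\rangle$ so that Theorem \ref{thm:BGTbdd} can be invoked there. This is exactly why the statement asserts $H\leq\langle A\rangle$ rather than merely $H\leq G$.

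Compared with Theorem \ref{thm:mainbdd}, this argument is strictly weaker: it gives $H\seq\bar{A}^8$ rather than $H\seq(AA\inv)^2\cap A^2A^{\nv 2}\cap(A\inv A)^2\cap A^{\nv 2}A^2$, and it controls only the covering of $\bar{A}^2$ rather than of $\bar{A}^m$ for arbitrary $m$. The improvement in Theorem \ref{thm:mainbdd} comes precisely from replacing the ``pass to an approximate group via Theorem \ref{thm:Tao}'' step with the sharper Sanders-Croot-Sisask analysis of Lemma \ref{lem:MWKP}, which directly produces a symmetric set $Y$ with $Y^4$ inside the much smaller product sets, before invoking Theorem \ref{thm:BGTbdd}.
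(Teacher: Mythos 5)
Your proof is correct and is exactly the argument the paper intends: the corollary is stated precisely as the concatenation of Theorem \ref{thm:Tao} (pass to the approximate group $\bar{A}^2$) with Theorem \ref{thm:BGTbdd} (extract a subgroup $H\seq(\bar{A}^2)^4=\bar{A}^8$ covering $\bar{A}^2$ by boundedly many cosets). The bookkeeping on constants and the observation that $H\leq\langle A\rangle$ (automatic since $H\seq\bar{A}^8\seq\langle A\rangle$) are both handled correctly.
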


\begin{remark}\label{rem:Tao}
Corollary \ref{cor:Tao} could be used instead of Corollary \ref{cor:sym}$(a)$ in the proof Theorem \ref{thm:NIPregexp}.
\end{remark}

\begin{remark}\label{rem:HruTao}
Recall that if $A\seq G$ is finite and nonempty, with $|AA\inv A|\leq k|A|$, then $|(A\inv A)^3|\leq k^{O(1)}|A\inv A|$ by Proposition \ref{prop:Ruz}$(b)$, and so $(AA\inv)^2$ is an $O(k^{O(1)})$-approximate group by Theorem \ref{thm:Tao}. Altogether, this is essentially the ``discrete case" of \cite[Corollary 3.11]{HruAG}.  
\end{remark}

A weaker version of Theorem \ref{thm:maingen} can also be formulated using Theorem \ref{thm:Tao}, but the proof would still require our work with saturated extensions and approximate Bohr neighborhoods, and so we will not go into it any further. On the other hand, the following statement about sets of small tripling in arbitrary groups follows by combining Theorem \ref{thm:Tao} with the main structure theorems for approximate groups from Breuillard, Green, and Tao \cite{BGT}.

\begin{theorem}[\cite{BGT}, \cite{TaoPSE}]\label{thm:BGTtrip}
Fix a positive integer $k$. Suppose $G$ is a group and $A\seq G$ is finite and nonempty, with $|A^3|\leq k|A|$. Then there is a subgroup $H$ of $G$ and a finite normal subgroup $N$ of $H$ with the following properties:
\begin{enumerate}[$(i)$]
\item $A$ is covered by $O_k(1)$ left cosets of $H$;
\item $H/N$ is nilpotent and finitely generated of rank and step $O_k(1)$;
\item $\bar{A}^8$ contains $N$ and a generating set for $H$.
\end{enumerate}
Moreover, there is a coset nilprogression $P\seq \bar{A}^8$ of rank and step $O_k(1)$ such that $A$ is covered by $O_k(1)$ left translates of $P$. 
\end{theorem}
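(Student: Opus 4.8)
The plan is to deduce Theorem \ref{thm:BGTtrip} by combining Tao's result (Theorem \ref{thm:Tao}) with the two structure theorems for approximate groups that we have already quoted from \cite{BGT}, namely Theorem \ref{thm:BGTmain} and its nilprogression-strengthening. The point is that a set of small tripling is "sandwiched" inside an approximate group whose size is comparable, so all the structural consequences about approximate groups transfer to $A$ with only a loss in the implied constants.

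\medskip

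\noindent\emph{Proof of Theorem \ref{thm:BGTtrip} (sketch).}
Fix $k$ and let $A\seq G$ be finite and nonempty with $|A^3|\leq k|A|$. By Theorem \ref{thm:Tao}, the set $X:=\bar{A}^2$ is an $O(k^{O(1)})$-approximate subgroup of $G$ containing $A$; write $k':=O(k^{O(1)})$ for this approximation constant. Apply Theorem \ref{thm:BGTmain} to $X$: there is a subgroup $H\leq G$ and a finite normal subgroup $N\trianglelefteq H$ such that $X$ is covered by $O_{k'}(1)$ left translates of $H$, the quotient $H/N$ is nilpotent and finitely generated of rank and step $O_{k'}(1)$, and $X^4\supseteq N$ together with a generating set for $H$. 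Since $X^4=\bar{A}^8$ and $A\seq X$, conclusions $(i)$, $(ii)$, and $(iii)$ follow immediately, with the constants $O_{k'}(1)=O_k(1)$ after absorbing the polynomial dependence of $k'$ on $k$ into the $O_k(1)$ notation. For the "moreover" clause, one applies instead the nilprogression version of the $\cite{BGT}$ structure theorem (the last sentence of Theorem \ref{thm:BGTmain}, as stated in \cite{BGT}): there is a coset nilprogression $P$ of rank and step $O_{k'}(1)$ with $P\seq X^4=\bar{A}^8$ such that $X$, and hence $A$, is covered by $O_{k'}(1)=O_k(1)$ left translates of $P$. \qed

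\medskip

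\noindent The only delicate point is bookkeeping with constants: every "$O_k(1)$'' in the conclusion is really an $O_{k'}(1)$ obtained by plugging the polynomial bound $k'=O(k^{O(1)})$ from Theorem \ref{thm:Tao} into the $\cite{BGT}$ bounds, and one must check that the composition of these polynomial-and-worse dependencies is still a bound depending only on $k$, which it is. There is no genuine obstacle here, since both Theorem \ref{thm:Tao} and the structural results of \cite{BGT} are available to us as black boxes; the statement is essentially a formal corollary. The slight subtlety worth flagging for the reader is that $A$ itself need not be an approximate group, so one cannot apply Theorem \ref{thm:BGTmain} to $A$ directly — it is precisely Tao's Theorem \ref{thm:Tao} that repairs this, at the cost of passing from $A$ to $\bar{A}^2$ and from $A^4$ to $\bar{A}^8$.
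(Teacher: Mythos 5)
Your proof is correct and follows the same route the paper intends. The paper does not give a detailed proof of Theorem \ref{thm:BGTtrip}; it simply asserts that the result ``follows by combining Theorem \ref{thm:Tao} with the main structure theorems for approximate groups from Breuillard, Green, and Tao \cite{BGT},'' and your argument is exactly this combination made explicit: pass from $A$ to the approximate group $X=\bar{A}^2$ via Theorem \ref{thm:Tao}, apply the \cite{BGT} structure theorem to $X$, and observe that $X^4=\bar{A}^8$ and $A\seq X$, with the polynomial dependence $k'=O(k^{O(1)})$ absorbed into the $O_k(1)$'s. One small correction on the ``moreover'' clause: the coset nilprogression statement is not part of \cite[Theorem 1.6]{BGT} (the result quoted as Theorem \ref{thm:BGTmain}) but a separate result, \cite[Theorem 2.10]{BGT}; this is precisely the reference the paper uses in its proof of Theorem \ref{thm:BGT+}, where that theorem is applied to the approximate group $Y$ produced by Lemma \ref{lem:MWKP} to get a nilprogression $P\seq Y^8$. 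For your argument to yield $P\seq\bar{A}^8=X^4$ (rather than the weaker $\bar{A}^{16}=X^8$), you need the nilprogression from \cite[Theorem 2.10]{BGT} to land in $X^4$; note that the paper's own proof of Theorem \ref{thm:BGT+} only invokes the containment $P\seq Y^8$, so it is worth double-checking the exact exponent in \cite{BGT} when citing it here — but this is a detail of the external reference, not a gap in your reasoning, and the paper's own statement of Theorem \ref{thm:BGTtrip} tacitly relies on the same exponent.
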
 

The final clause of the previous theorem is a non-abelian analogue of the \emph{Bogolyubov-Ruzsa Lemma} for finite abelian groups, which was stated after Theorem \ref{thm:Bogo}. However, we have the qualitative discrepancy between $\bar{A}^8$ in condition $(iii)$ and $2A-2A$ in the abelian case. Given our earlier results, one naturally wonders if $\bar{A}^8$ can be replaced by $(AA\inv)^2\cap A^2A^{\nv 2}\cap (A\inv A)^2\cap A^{\nv 2}A^2$. This also raises a similar question about small alternation. So we observe that these issues can be addressed simply by combining the results in \cite{BGT} with Lemma \ref{lem:MWKP}. 

\begin{theorem}\label{thm:BGT+}
Fix a positive integer $k$. Suppose $G$ is a group and $A\seq G$ is finite and nonempty. Furthermore,
\begin{enumerate}[$(a)$]
\item assume $|AA\inv A|\leq k|A|$ and set $V=AA\inv$ and $W=(AA\inv)^2$, or
\item assume $|A^3|\leq k|A|$ and set $V=\bar{A}$ and $W=(AA\inv)^2\cap A^2A^{\nv 2}\cap (A\inv A)^2\cap A^{\nv 2}A^2$.
\end{enumerate}
Then there is a subgroup $H$ of $G$ and a finite normal subgroup $N$ of $H$ with the following properties:
\begin{enumerate}[$(i)$]
\item for all $m\geq 1$, $V^m$ is covered by $O_{k,m}(1)$ left cosets of $H$;
\item $H/N$ is nilpotent and finitely generated of rank and step  $O_{k}(1)$;
\item $W$ contains $N$ and a generating set for $H$.
\end{enumerate}
Moreover, there is a coset nilprogression $P\seq W$ of rank and step $O_{k}(1)$ such that for all $m\geq 1$, $V^m$ is covered by $O_{k,m}(1)$ left translates of $P$. 
\end{theorem}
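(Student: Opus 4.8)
The plan is to run the structure theorem of Breuillard, Green, and Tao through the sharpened Sanders--Croot--Sisask lemma from Section \ref{sec:CSS}, exactly mirroring the passage from Theorem \ref{thm:BGTmain} to Theorem \ref{thm:BGTtrip}, but inserting Lemma \ref{lem:MWKP} at the point where one would otherwise quote only Theorem \ref{thm:Tao}. First I would set up the two cases uniformly: in case $(a)$ put $V=AA\inv$, $W=(AA\inv)^2$, and in case $(b)$ put $V=\bar A$, $W=(AA\inv)^2\cap A^2A^{\nv2}\cap(A\inv A)^2\cap A^{\nv2}A^2$. In both cases Proposition \ref{prop:Ruz} gives Pl\"unnecke--Ruzsa control: in case $(a)$, $|(AA\inv)^n|\leq k^{O_n(1)}|A|$, and in case $(b)$ the analogous bound for all words in $A,A\inv$ of bounded length. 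By \cite[Corollary 5.2]{BGT}, $\bar V^{2}$ (or more simply $\bar A^2$, resp.\ $(AA\inv)^2$) is an $O(k^{O(1)})$-approximate group, so Theorem \ref{thm:BGTmain} applies to it.

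Next I would apply Lemma \ref{lem:MWKP} (part $(a)$ or $(b)$ according to the case) with a suitably large fixed $n$ — say $n=8$ — and a fixed $m$ chosen so that $W\seq V^m$; this produces a finite symmetric $Y\seq G$ with $Y^{8}\seq W$ such that $V^m$ is covered by $O_{k,m}(1)$ $V^m$-translates of $Y$. Since $Y^2\seq Y^8\seq W\seq V^m$, the set $Y$ is an $O_{k,m}(1)$-approximate group. Now feed $Y$ (after symmetrizing and passing to a bounded power to land squarely in the hypotheses) into Theorem \ref{thm:BGTmain}: this yields a subgroup $H\leq G$ and a finite normal subgroup $N\trianglelefteq H$ with $H/N$ nilpotent, finitely generated of rank and step $O_{k}(1)$; with $Y^4$ containing $N$ and a generating set of $H$; and with $Y$ covered by $O_{k}(1)$ left cosets of $H$. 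Because $Y^4\seq Y^8\seq W$, we get $N\seq W$ and a generating set of $H$ inside $W$, giving $(iii)$. Composing the two covering statements — $V^m$ covered by $O_{k,m}(1)$ translates of $Y$, and $Y$ covered by $O_k(1)$ cosets of $H$ — and using that a translate of $Y$ meeting a coset $gH$ lies in $O_k(1)$ cosets (since $Y\seq \bigcup O_k(1)$-many cosets of $H$ and $H$ is a group), one gets $(i)$: $V^m$ is covered by $O_{k,m}(1)$ left cosets of $H$. For the ``moreover'' clause, quote the coset-nilprogression refinement of \cite{BGT} (the final sentence of Theorem \ref{thm:BGTmain}'s source, i.e. \cite[Theorem 2.12]{BGT} or equivalently the moreover clause of Theorem \ref{thm:BGTtrip}): applied to the approximate group $Y$ it produces a coset nilprogression $P\seq Y^4\seq W$ of rank and step $O_k(1)$ such that $Y$ is covered by $O_k(1)$ translates of $P$; composing with the covering of $V^m$ by translates of $Y$ gives the stated conclusion.

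The main obstacle I anticipate is bookkeeping rather than mathematical depth: Lemma \ref{lem:MWKP} gives translates of $Y$ by elements of $V^m$, while Theorem \ref{thm:BGTmain} gives cosets/translates indexed differently, so one must carefully track the arithmetic of how many cosets of $H$ (or translates of $P$) are needed, and verify the ``approximate group'' hypotheses are genuinely met by $Y$ — in particular that $Y Y\seq$ a bounded number of translates of $Y$, which here follows from $YY\seq Y^2\seq W$ combined with the covering of $W\seq V^m$ by $O_{k,m}(1)$ translates of $Y$. One small subtlety is that in case $(b)$ the set $W$ is an intersection of four product sets, so I must be sure to invoke the version of Lemma \ref{lem:MWKP}$(b)$ whose conclusion $Y^n\seq W$ already delivers the intersection (which it does, via the reduction to the individual $\Pi_c$ carried out in its proof). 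A second routine point: to invoke Theorem \ref{thm:BGTmain} one wants an honest $k$-approximate subgroup, i.e.\ a symmetric set; $Y$ is already symmetric by construction in Lemma \ref{lem:MWKP}, so no adjustment is needed there. Everything else is a direct transcription of the argument already used for Theorem \ref{thm:BGTtrip}, with $\bar A^8$ replaced throughout by $W$ because $Y^8\seq W$ is exactly what the sharpened Section \ref{sec:CSS} analysis buys us over the crude $\bar A^2$-is-an-approximate-group input.
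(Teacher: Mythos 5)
Your proposal is correct and follows essentially the same route as the paper: apply Lemma \ref{lem:MWKP} once (for a fixed $m,n$) to produce a single symmetric $Y$ with $Y^8\seq W$ that is an $O_k(1)$-approximate group, feed $Y$ into the BGT structure theorem and its coset-nilprogression refinement, and compose coverings. The only place where the paper is slightly tighter than your write-up is in verifying the ``for all $m\geq1$'' quantifier in $(i)$: you obtain $V^m$ covered by translates of $Y$ only for the fixed $m$ used in the lemma, whereas the paper explicitly notes that $Y\seq V^c$ for a bounded $c$ (since $Y\seq Y^8\seq W$) and iterates this to cover $V^m$ by $O_{k,m}(1)$ translates of $Y$ for every $m$; adding that one-line bootstrap closes the gap.
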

\begin{proof}
By Lemma \ref{lem:MWKP}, there is a symmetric set $Y\seq G$ such that $Y^8\seq W$ and $V^3$ is covered by $O_k(1)$ left translates of $Y$. Since $Y\seq V^2$, it follows that for all $m\geq 1$, $V^m$ is covered by $O_{k,m}(1)$ left translates of $Y$. Note also that $Y$ is an $O_k(1)$-approximate group. So by \cite[Theorem 1.6]{BGT}, there are $N\trianglelefteq H\leq G$ such that $Y$ is covered by $O_k(1)$ left cosets of $H$, $H/N$ is nilpotent and finitely generated of rank and step  $O_k(1)$, and $Y^4$ contains $N$ and a generating set for $H$. Moreover, by \cite[Theorem 2.10]{BGT}, there is a coset nilprogression $P\seq Y^8$ of rank and step $O_k(1)$ such that $|Y|\leq O_k(|P|)$ and $Y$ is covered by $O_k(1)$ left translates of $P$. Altogether, the result follows by choice of $Y$.
\end{proof}

\bibliographystyle{amsplain}

\begin{thebibliography}{10}

\bibitem{AlGlGo}
M.~A. Alekseev, L.~Yu. Glebski\u\i, and E.~I. Gordon, \emph{On approximations
  of groups, group actions and {H}opf algebras}, Zap. Nauchn. Sem.
  S.-Peterburg. Otdel. Mat. Inst. Steklov. (POMI) \textbf{256} (1999),
  no.~Teor. Predst. Din. Sist. Komb. i Algoritm. Metody. 3, 224--262, 268.
  \MR{1708567}

\bibitem{AFN}
Noga Alon, Eldar Fischer, and Ilan Newman, \emph{Efficient testing of bipartite
  graphs for forbidden induced subgraphs}, SIAM J. Comput. \textbf{37} (2007),
  no.~3, 959--976. \MR{2341924}

\bibitem{AFZ}
Noga Alon, Jacob Fox, and Yufei Zhao, \emph{Efficient arithmetic regularity and
  removal lemmas for induced bipartite patterns}, Discrete Anal. (2019), Paper
  No. 3, 14. \MR{3943117}

\bibitem{BaGoPy}
L\'aszl\'o Babai, Albert~J. Goodman, and L\'aszl\'o Pyber, \emph{Groups without
  faithful transitive permutation representations of small degree}, J. Algebra
  \textbf{195} (1997), no.~1, 1--29. \MR{1468882}

\bibitem{Bog39}
N.~Bogolio\`uboff, \emph{Sur quelques propri\'et\'es arithm\'etiques des
  presque-p\'eriodes}, Ann. Chaire Phys. Math. Kiev \textbf{4} (1939),
  185--205. \MR{0020164}

\bibitem{BourgTAP}
J.~Bourgain, \emph{On triples in arithmetic progression}, Geom. Funct. Anal.
  \textbf{9} (1999), no.~5, 968--984. \MR{1726234}

\bibitem{BGT}
Emmanuel Breuillard, Ben Green, and Terence Tao, \emph{The structure of
  approximate groups}, Publ. Math. Inst. Hautes \'Etudes Sci. \textbf{116}
  (2012), 115--221. \MR{3090256}

\bibitem{CollJCLG}
Michael~J. Collins, \emph{On {J}ordan's theorem for complex linear groups}, J.
  Group Theory \textbf{10} (2007), no.~4, 411--423. \MR{2334748}


\bibitem{CPT}
G.~Conant, A.~Pillay, and C.~Terry, \emph{A group version of stable
  regularity}, Math. Proc. Cambridge Philos. Soc. \textbf{168} (2020), no.~2,
  405--413. \MR{4064112}

\bibitem{CPTNIP}
\bysame, \emph{Structure and regularity for subsets of groups with finite
  {VC}-dimension}, arXiv:1802.04246, 2018.

\bibitem{CrSi}
Ernie Croot and Olof Sisask, \emph{A probabilistic technique for finding
  almost-periods of convolutions}, Geom. Funct. Anal. \textbf{20} (2010),
  no.~6, 1367--1396. \MR{2738997}

\bibitem{FreiFST}
G.~A. Fre{\u\i}man, \emph{Foundations of a structural theory of set addition},
  American Mathematical Society, Providence, R. I., 1973, Translated from the
  Russian, Translations of Mathematical Monographs, Vol 37. \MR{0360496}

\bibitem{FreiKK}
Gregory~A. Freiman, \emph{What is the structure of {$K$} if {$K+K$} is small?},
  Number theory ({N}ew {Y}ork, 1984--1985), Lecture Notes in Math., vol. 1240,
  Springer, Berlin, 1987, pp.~109--134. \MR{894508}

\bibitem{GowQRG}
W.~T. Gowers, \emph{Quasirandom groups}, Combin. Probab. Comput. \textbf{17}
  (2008), no.~3, 363--387. \MR{2410393}

\bibitem{GreenSLAG}
B.~Green, \emph{A {S}zemer\'edi-type regularity lemma in abelian groups, with
  applications}, Geom. Funct. Anal. \textbf{15} (2005), no.~2, 340--376.
  \MR{2153903}

\bibitem{GrRuz}
Ben Green and Imre~Z. Ruzsa, \emph{Freiman's theorem in an arbitrary abelian
  group}, J. Lond. Math. Soc. (2) \textbf{75} (2007), no.~1, 163--175.
  \MR{2302736}

\bibitem{HaussPL}
David Haussler, \emph{Sphere packing numbers for subsets of the {B}oolean
  {$n$}-cube with bounded {V}apnik-{C}hervonenkis dimension}, J. Combin. Theory
  Ser. A \textbf{69} (1995), no.~2, 217--232. \MR{1313896}

\bibitem{HelfGG}
H.~A. Helfgott, \emph{Growth and generation in {${\rm SL}_2(\Bbb Z/p\Bbb Z)$}},
  Ann. of Math. (2) \textbf{167} (2008), no.~2, 601--623. \MR{2415382}

\bibitem{HruAG}
Ehud Hrushovski, \emph{Stable group theory and approximate subgroups}, J. Amer.
  Math. Soc. \textbf{25} (2012), no.~1, 189--243. \MR{2833482}

\bibitem{HPP}
Ehud Hrushovski, Ya'acov Peterzil, and Anand Pillay, \emph{Groups, measures,
  and the {NIP}}, J. Amer. Math. Soc. \textbf{21} (2008), no.~2, 563--596.
  \MR{2373360 (2008k:03078)}

\bibitem{Keis64}
H.~Jerome Keisler, \emph{Ultraproducts and saturated models}, Nederl. Akad.
  Wetensch. Proc. Ser. A 67 = Indag. Math. \textbf{26} (1964), 178--186.
  \MR{0168483}

\bibitem{KrPiAG}
Krzysztof Krupi\'{n}ski and Anand Pillay, \emph{Amenability, definable groups,
  and automorphism groups}, Adv. Math. \textbf{345} (2019), 1253--1299.
  \MR{3904280}



\bibitem{MaShStab}
M.~Malliaris and S.~Shelah, \emph{Regularity lemmas for stable graphs}, Trans.
  Amer. Math. Soc. \textbf{366} (2014), no.~3, 1551--1585. \MR{3145742}

\bibitem{MassWa}
Jean-Cyrille Massicot and Frank~O. Wagner, \emph{Approximate subgroups}, J.
  \'Ec. polytech. Math. \textbf{2} (2015), 55--64. \MR{3345797}

\bibitem{NikPyDJ}
N.~Nikolov and L.~Pyber, \emph{Product decompositions of quasirandom groups and
  a {J}ordan type theorem}, J. Eur. Math. Soc. (JEMS) \textbf{13} (2011),
  no.~4, 1063--1077. \MR{2800484}

\bibitem{NST}
Nikolay Nikolov, Jakob Schneider, and Andreas Thom, \emph{Some remarks on
  finitarily approximable groups}, J. \'Ec. polytech. Math. \textbf{5} (2018),
  239--258. \MR{3749196}

\bibitem{PilCLG}
Anand Pillay, \emph{Type-definability, compact {L}ie groups, and o-minimality},
  J. Math. Log. \textbf{4} (2004), no.~2, 147--162. \MR{2114965}

\bibitem{PiRCP}
\bysame, \emph{Remarks on compactifications of pseudofinite groups}, Fund.
  Math. \textbf{236} (2017), no.~2, 193--200. \MR{3591278}

\bibitem{Plunn}
Helmut Pl\"unnecke, \emph{Eigenschaften und {A}bsch\"atzungen von
  {W}irkungsfunktionen}, BMwF-GMD-22, Gesellschaft f\"ur Mathematik und
  Datenverarbeitung, Bonn, 1969. \MR{0252348}

\bibitem{Ruz94}
I.~Z. Ruzsa, \emph{Generalized arithmetical progressions and sumsets}, Acta
  Math. Hungar. \textbf{65} (1994), no.~4, 379--388. \MR{1281447}

\bibitem{Ruz}
Imre~Z. Ruzsa, \emph{Sums of finite sets}, Number theory ({N}ew {Y}ork,
  1991--1995), Springer, New York, 1996, pp.~281--293. \MR{1420216}

\bibitem{SanBS}
Tom Sanders, \emph{On a nonabelian {B}alog-{S}zemer\'edi-type lemma}, J. Aust.
  Math. Soc. \textbf{89} (2010), no.~1, 127--132. \MR{2727067}

\bibitem{SanBR}
\bysame, \emph{On the {B}ogolyubov-{R}uzsa lemma}, Anal. PDE \textbf{5} (2012),
  no.~3, 627--655. \MR{2994508}

\bibitem{SisNIP}
Olof Sisask, \emph{Convolutions of sets with bounded {VC}-dimension are
  uniformly continuous}, arXiv:1802.02836, 2018.

\bibitem{TaoPSE}
Terence Tao, \emph{Product set estimates for non-commutative groups},
  Combinatorica \textbf{28} (2008), no.~5, 547--594. \MR{2501249}

\bibitem{TaoH5P}
\bysame, \emph{Hilbert's fifth problem and related topics}, Graduate Studies in
  Mathematics, vol. 153, American Mathematical Society, Providence, RI, 2014.
  \MR{3237440}

\bibitem{TaoVu}
Terence Tao and Van Vu, \emph{Additive combinatorics}, Cambridge Studies in
  Advanced Mathematics, vol. 105, Cambridge University Press, Cambridge, 2006.
  \MR{2289012}


\bibitem{TeWo}
C.~Terry and J.~Wolf, \emph{Stable arithmetic regularity in the finite field model}, Bull.
  Lond. Math. Soc. \textbf{51} (2019), no.~1, 70--88. \MR{3919562}
  
\bibitem{TeWo2}
\bysame, \emph{Quantitative structure of stable sets in finite
  abelian groups}, arXiv:1805.06847, accepted to Trans. Amer. Math. Soc.

\end{thebibliography}
\end{document}